\documentclass[11pt]{article}

\usepackage[margin=1in]{geometry}
\usepackage{amsmath,amssymb,amsthm,mathtools}
\usepackage{enumitem}
\usepackage{graphicx}
\usepackage{mathrsfs}
\usepackage{hyperref}

\usepackage{caption}
\usepackage{calc}
\usepackage{ragged2e}
\usepackage{setspace}
\newcommand{\FundingLogos}{%
  \raisebox{0pt}{\includegraphics[height=1.5cm]{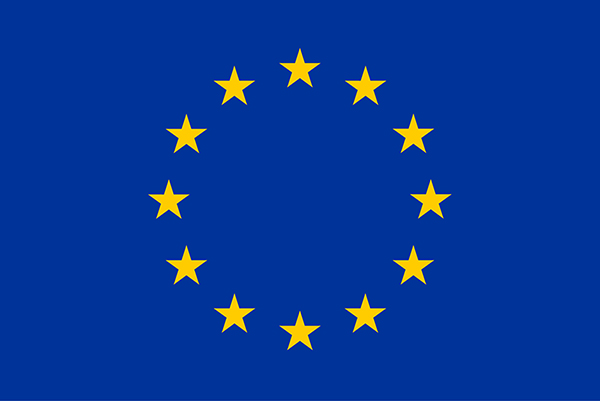}}%
  \hspace{1em}%
  \raisebox{0pt}{\includegraphics[height=1.5cm]{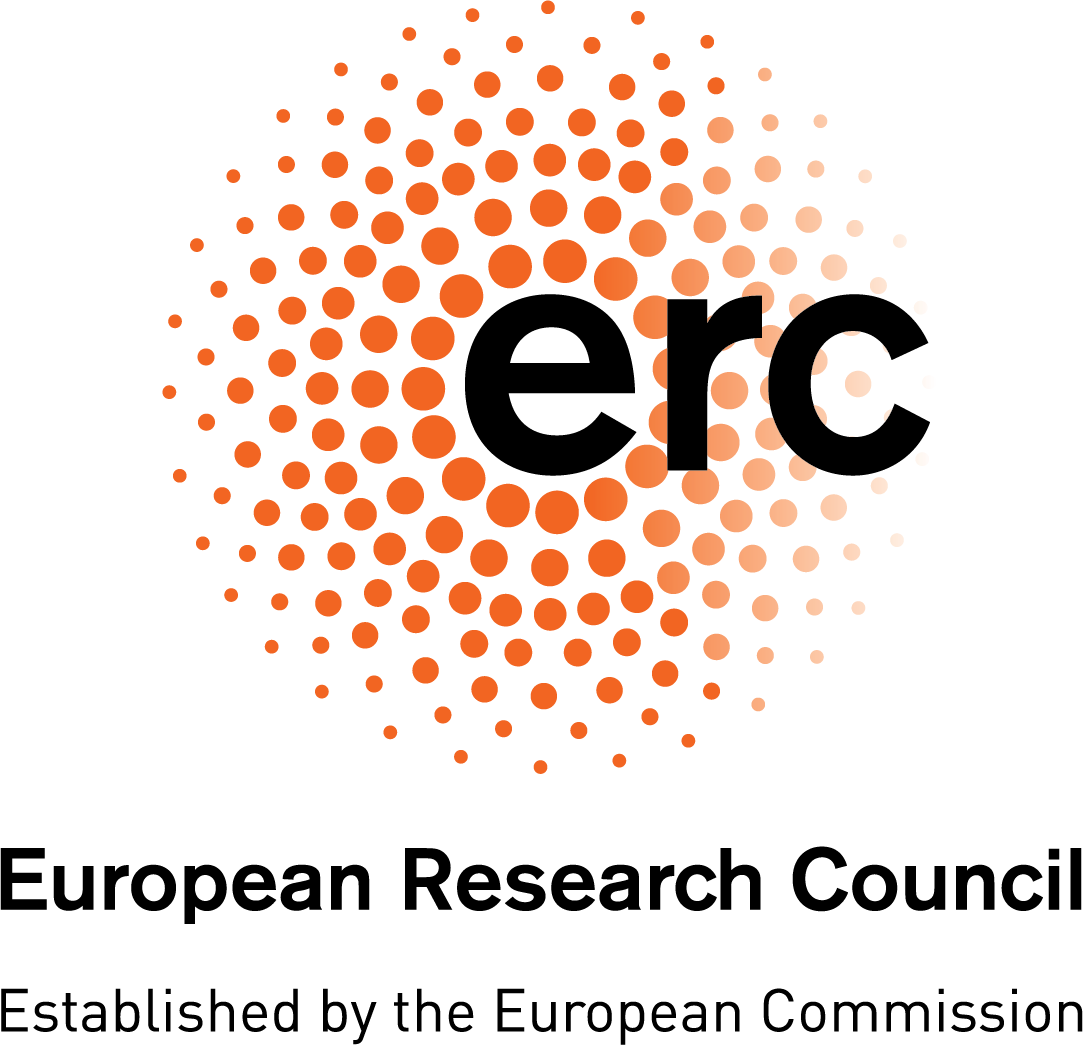}}%
}

\usepackage{tcolorbox}
\tcbset{colback=white,boxrule=0.5pt,arc=2pt,left=4pt,right=4pt,top=4pt,bottom=4pt}

\hypersetup{colorlinks=true,linkcolor=blue,citecolor=blue,urlcolor=blue}

\newtheorem{theorem}{Theorem}[section]
\newtheorem{proposition}[theorem]{Proposition}
\newtheorem{lemma}[theorem]{Lemma}
\newtheorem{corollary}[theorem]{Corollary}
\newtheorem{assumption}[theorem]{Assumption}
\theoremstyle{definition}
\newtheorem{definition}[theorem]{Definition}
\newtheorem{example}[theorem]{Example}
\newtheorem{remark}[theorem]{Remark}

\newcommand{\R}{\mathbb{R}}
\newcommand{\N}{\mathbb{N}}
\newcommand{\eps}{\varepsilon}
\newcommand{\dd}{\,\mathrm{d}}
\newcommand{\supp}{\operatorname{supp}}

\newcommand{\Lip}{\operatorname{Lip}}

\newcommand{\one}{\mathbf{1}}
\newcommand{\calP}{\mathcal{P}}

\newcommand{\calD}{\mathcal{D}}
\newcommand{\calE}{\mathcal{E}}

\newcommand{\calL}{\mathcal{L}}
\newcommand{\calS}{\mathcal{S}}

\newcommand{\Hh}{\mathcal{H}}
\newcommand{\norm}[1]{\left\|#1\right\|}

\newcommand{\ip}[2]{\left\langle #1,#2\right\rangle}
\newcommand{\weak}{\rightharpoonup}
\newcommand{\grad}{\nabla}

\newcommand{\diver}{\operatorname{div}}
\newcommand{\sgn}{\operatorname{sgn}}

\usepackage[colorinlistoftodos,bordercolor=orange,backgroundcolor=orange!20,linecolor=orange,textsize=scriptsize]{todonotes}

\numberwithin{equation}{section}

\title{The Nonlocal Attraction-Repulsion Transport Equation with Power Kernels}
\author{
Massimo Fornasier \thanks{Department of Mathematics \& Munich Data Science Institute, Technical University of Munich \& Munich Center for Machine Learning
  Email: \texttt{massimo.fornasier@ma.tum.de}}
\and
  Hui Huang \thanks{Department of Mathematics,  Hunan University
  Email: \texttt{huihuang1@hnu.edu.cn}
} \and 
  Lukang Sun\thanks{Department of Mathematics, Technical University of Munich \& Munich Center for Machine Learning
  Email: \texttt{lukang.sun@tum.de}}}

\date{\today}
\begin{document}
\maketitle

\begin{abstract}
We study a nonlocal continuity equation on $\mathbb{R}^d$ in which a probability 
density is driven by the competition between attraction toward a prescribed background 
measure $\omega$ and self-repulsion among particles, governed respectively by the 
power-law kernels $\psi_a(x) = |x|^{1+a}$ and $\psi_r(x) = |x|^{1+r}$ with exponents 
$a, r \in [0,1)$. We establish global Lagrangian well-posedness via a squared-radius 
regularization, obtaining uniform $L^\infty$ and moment bounds,  
$W^{n,\infty}$ regularity, and uniqueness in the Lagrangian class. When the initial 
data is compactly supported and attraction dominates ($a > r$, or $a = r$ with 
$\omega(\mathbb{R}^d) > 1$), we prove that the support remains uniformly bounded at
all time; a counterexample shows this fails for $a = r > 1$. For the 
attractive-dominant nonquadratic range $0 \leq r \leq a < 1$, we characterize 
zero-flux stationary states via a free-boundary problem involving a fractional Laplacian operator, reducing the stationarity condition to a fractional exterior 
Dirichlet problem. This characterization allows us to exhibit explicit examples of stationary measures in dimensions $d \in \{1,2,3\}$. Numerical particle simulations confirm agreement with the theoretical stationary profiles. Finally, we prove that every global solution with bounded energy and uniform moment bounds converges to a zero-flux stationary state.
\end{abstract}

{\it Keywords}: {well-posedness of gradient flows of singular attraction-repulsion potentials, 
nonlocal interactions, steady states, large-time behavior and asymptotics} \\

AMS Classification: 35Q70, 35A01, 35B40, 49Q22\\

\tableofcontents

\section{Introduction}\label{sec:intro}

We study the nonlocal transport equation
\begin{equation}\label{eq:main}
    \partial_t \phi_t = \operatorname{div}\!\bigl(\phi_t(\nabla V - K_r * \phi_t)\bigr),
    \qquad t > 0,
\end{equation}
where the background potential is $V = \psi_a * \omega$ for a prescribed
nonnegative density $\omega \in L^1(\mathbb{R}^d) \cap L^\infty(\mathbb{R}^d)$,
and the interaction and attraction kernels are given by the power-law
family
\begin{equation}
    \psi_s(x) := |x|^{1+s}, \qquad K_s(x) := \nabla \psi_s(x) =
    (1+s)|x|^{s-1}x, \quad x \neq 0,
\end{equation}
for exponents $a, r \in [0,1]$. {As case $a=r=1$ reduces to a simple ODE with explicit solutions (see, e.g. \cite[Section 4.1]{difrancesco2015asymptotic}), in this paper we actually consider the range $a, r \in [0,1)$ only.
The background measure $\omega$ may have arbitrary values of mass $\omega(\mathbb R^d)$.}
The unknown $\phi_t$ is a time-dependent
probability density on $\mathbb{R}^d$, and \eqref{eq:main} is a continuity
equation driven by the competition between attraction toward the
background measure $\omega$ and self-repulsion among particles.
Equation \eqref{eq:main} is the formal Wasserstein gradient flow of the
interaction energy \eqref{eq:energy1}.
\\

A primary motivation for the study of \eqref{eq:main} comes from two
distinct but related modelling perspectives. In the equal-exponent case
$a = r$ with $\omega \in \mathcal{P}(\mathbb{R}^d)$, the dynamics
\eqref{eq:main} formally correspond to the Wasserstein gradient flow of
the Maximum Mean Discrepancy $\operatorname{MMD}^2(\cdot, \omega)$ with
negative-distance kernel \cite{gretton2012kernel}, as we explain in detail below. 

The attractive-dominant case $a > r$ gives rise to a richer and equally
compelling model. Here the population density $\phi_t$ is drawn toward
the resource distribution $\omega$ by a long-range attraction kernel
$K_a$, while being held back by an internal repulsion $K_r * \phi_t$
that models an internal competition among agents. Because the attraction exponent
strictly dominates the repulsion exponent, the dynamics drive $\phi_t$
toward a stationary state $\phi_\infty$ which does not in general coincide with $\omega$.
This mismatch between the equilibrium population $\phi_\infty$ and the
resource distribution $\omega$ can be interpreted as a prototype model of \emph{unequal access}:
agents are attracted to resources but their internal competition prevents
full redistribution. The resulting inequality in the distribution of
wealth can be quantified by the cumulative distribution function of
$\phi_\infty$ with respect to the Radon--Nikodym density of $\omega$
relative to $\phi_\infty$, namely
\begin{equation}\label{eq:CDF}
    F_{\phi_\infty, \omega}(w) := \phi_\infty\!\left(\left\{x \in
    \mathbb{R}^d : \frac{\mathrm{d}\omega}{\mathrm{d}\phi_\infty}(x)
    \leq w\right\}\right), \qquad w \geq 0,
\end{equation}
which measures the fraction of the population whose per-capita share of
resources does not exceed the level $w$. The explicit characterization
of $\phi_\infty$ developed in Sections~\ref{sec:stationary}
and~\ref{sec:examples} thus provides a concrete tool for analyzing the
structure of such inequality distributions.
\\
{Well-posedness of the model \eqref{eq:main} applies also in the repulsion-dominant
case $a < r$. Yet, in this
repulsion dominant regime the driving energy \eqref{eq:energy1} does not in general admit
a minimizer, as shown in \cite{fornasier2016consistency}, and the
repulsive term $K_r * \phi_t$ dominates the confining effect of the
background potential $\nabla V$ at large distances, preventing the
formation of stationary states. Therefore, the large-time behavior in
this regime pose significantly greater challenges than in the attraction-dominant regime. A local-in-space
description may still be possible under suitable assumptions on the initial data, but it is beyond the scope of this paper.}
\\

{The analysis of
\eqref{eq:main} poses significant mathematical challenges in all cases {$a,r\in [0,1)$}:
the kernel $K_s$ lacks a global Lipschitz bound, and for $s = 0$ it is
singular at the origin, requiring a careful regularization strategy
that we develop in Section~\ref{sec:wellposedness}. In particular, while \eqref{eq:main} is formally a Wasserstein gradient flow, the corresponding energy is not $\lambda$-convex, as shown in \cite{hertrich2023generative} for $a=r$, and the general theory \cite{savare2008gradientflows} does not apply. 
\\
While for the well-posedness results we consider arbitrary values of mass $\omega(\R^d)$, the asymptotic results apply to the regime $\omega(\R^d)\geq 1$ only.
 }

\subsection*{Gradient flow structure and energy}

Equation \eqref{eq:main} is the formal Wasserstein gradient flow of the
interaction energy
\begin{equation}\label{eq:energy1}
    E(\phi) := \int_{\mathbb{R}^d} (\psi_a * \omega)(x)\, \mathrm{d}\phi(x)
    - \frac{1}{2}\iint_{\mathbb{R}^d \times \mathbb{R}^d}
    \psi_r(x-y)\, \mathrm{d}\phi(x)\, \mathrm{d}\phi(y).
\end{equation}
More precisely, the first variation of $E$ is
\[
    F_\phi(x) := \frac{\delta E}{\delta \phi}(x)
    = (\psi_a * \omega)(x) - (\psi_r * \phi)(x),
\]
and \eqref{eq:main} takes the form $\partial_t \phi_t =
\operatorname{div}(\phi_t \nabla F_{\phi_t})$, which is the standard
structure of a Wasserstein gradient flow \cite{savare2008gradientflows}.
The energy $E$ is monotonically nonincreasing along solutions, with
dissipation identity
\[
    \frac{\mathrm{d}}{\mathrm{d}t} E(\phi_t)
    = -\int_{\mathbb{R}^d} |K_a * \omega - K_r * \phi_t|^2\,
    \mathrm{d}\phi_t \leq 0.
\]
Energies of the form \eqref{eq:energy1} belong to the broader class of
nonlocal power repulsion-attraction energies. The Wasserstein gradient
flow of such energies was first studied by Fornasier, Ha\v{s}kovec, and
Steidl \cite{fornasier2013consistency}, who considered the mean-field
kinetic limit of discrete particle systems used for image halftoning in
the sense of continuous-domain quantization.  In one dimension,
Di Francesco, Fornasier, H{\"u}tter, and Matthes
\cite{difrancesco2015asymptotic} subsequently developed a comprehensive
well-posedness theory for the Wasserstein gradient flow of power-law
repulsion-attraction energies, proving uniform confinement of the
support and characterizing the asymptotic behavior of solutions in terms
of stationary states for a general class of power-law potentials. The one-dimensional case has been more recently studied also in 
\cite{duong2026wasserstein} by Duong, Stein, Beinert, Hertrich and
Steidl,  giving a comprehensive description of
Wasserstein gradient flows for $a=r$  on the real line, yielding
explicit piecewise linear solution formulas for discrete targets and
proving instantaneous regularization of initial point masses.\\

The present paper extends several of these results to arbitrary
dimension $d \geq 1$ and to {the discordant regime $a \neq r$,
where the attraction and repulsion exponents are allowed to differ},  and to $a=r, \omega(\mathbb R^d) \geq 1$,  allowing the mass of the target to exceed $1$ (unbalanced matching).

\subsection*{Relation to the broader literature on confinement and 
nonlocal repulsion}

The energy \eqref{eq:energy1} belongs to the well-studied class of
\emph{confinement-repulsion} functionals of the form
\begin{equation}\label{eq:general_energy}
    \mathcal{E}(\phi) = \int_{\mathbb{R}^d} V(x)\, \mathrm{d}\phi(x)
    + \frac{1}{2} \iint_{\mathbb{R}^d \times \mathbb{R}^d}
    W(x-y)\, \mathrm{d}\phi(x)\, \mathrm{d}\phi(y),
\end{equation}
where $V$ is a confining potential and $W$ is a (possibly singular)
repulsive-attractive interaction kernel. The Wasserstein gradient flow
theory for such energies originates with Carrillo, McCann, and Villani
\cite{carrillo2003kinetic, carrillo2006contractions}, who studied the
granular media equation (a paradigmatic example of
\eqref{eq:general_energy} with quadratic confinement $V(x) = |x|^2/2$
and a smooth interaction kernel) and proved long-time convergence to
equilibrium via displacement convexity and entropy dissipation
inequalities in the Wasserstein metric. The global-in-time well-posedness
theory for measure-valued solutions of the pure interaction equation
(without the confinement term) was then developed by Carrillo, Di
Francesco, Figalli, Laurent, and Slep\v{c}ev \cite{carrillo2011global},
who established existence and uniqueness of weak measure solutions for a
general class of interaction potentials and characterized conditions
under which solutions aggregate in finite time. The companion paper
\cite{carrillo2012confinement} by the same authors identifies sufficient
conditions on the potential $W$ for the support of the solution to
remain uniformly confined for all time, a result that is directly
relevant to Section~\ref{sec:confinement} of the present paper, where
we prove analogous confinement for \eqref{eq:energy1} in the
attractive-dominant regime $a \geq r$.

In our setting, the confinement potential $V = \psi_a * \omega =
|\cdot|^{1+a} * \omega$ is not prescribed externally but is itself
generated by the interaction of the evolving density with the fixed
background $\omega$ via the power-law kernel $\psi_a$, while the
repulsive part is given by the self-interaction kernel $W = -\psi_r =
-|\cdot|^{1+r}$, which is a Riesz-type kernel of order $1+r \in (1,2)$.
The structure is therefore that of a \emph{background-driven confinement}
paired with \emph{Riesz self-repulsion}, a combination that falls
outside the standard assumptions of the classical theory (smooth or
$\lambda$-convex kernels) and requires the regularization and
compactness arguments developed in Sections~\ref{sec:wellposedness}
and~\ref{sec:confinement}. For the pure repulsion energy
$\mathcal{E}(\mu) = \frac{1}{2}\iint W(x-y)\,\mathrm{d}\phi(x)\,
\mathrm{d}\phi(y)$ with $W$ of Riesz or Newtonian type, the regularity
of stationary states (local minimizers) has been analyzed by Carrillo,
Delgadino, and Mellet \cite{carrillo2016regularity}, who showed that
Newtonian or stronger repulsion forces local minimizers to be locally
bounded densities, by reducing the Euler--Lagrange stationarity
condition to a classical obstacle problem for the fractional Laplacian.
This connection between stationarity conditions and obstacle problems
is closely related to the free-boundary characterization developed in
Section~\ref{sec:stationary} of the present paper, where the
stationarity condition $\phi\,(K_a * \omega - K_r * \phi)  = 0$ is reformulated as a
fractional exterior Dirichlet problem via the fractionary Laplacian. The existence of compactly supported global minimizers
for interaction energies with Riesz-type repulsion was established by
Ca\~{n}izo, Carrillo, and Patacchini \cite{canizo2015existence} under
general repulsive-attractive potentials; our explicit construction of
stationary states in Sections~\ref{sec:stationary} and~\ref{sec:examples}
may be seen as a complementary contribution, identifying the precise
density profile of such minimizers in specific geometries. For a
comprehensive overview of the broader theory of aggregation-diffusion
equations with confinement and nonlocal repulsion, including dynamics,
asymptotics, and singular limits, we refer to the survey by Carrillo,
Craig, and Yao \cite{carrillo2019aggregation}.

\subsection*{Connection to Maximum Mean Discrepancy}

In the equal-exponent case $s:=a = r \in [0,1)$, the energy
\eqref{eq:energy1} is directly related to the Maximum Mean Discrepancy
(MMD) with the negative-distance power kernel $K(x) = -|x|^{1+s}$.
Recall that for a symmetric kernel $K$, the MMD between two probability
measures $\phi$ and $\omega$ is defined by
\[
    \operatorname{MMD}^2(\phi, \omega)
    := \frac{1}{2} \iint K(x-y)\,
    \mathrm{d}(\phi - \omega)(x)\, \mathrm{d}(\phi - \omega)(y).
\]
When $K(x) = -|x|^{1+s}$, a direct expansion gives
\[
    \operatorname{MMD}^2(\phi, \omega) = E(\phi) + C_\omega,
\]
where $C_\omega$ depends only on $\omega$. Hence minimizing the energy
$E$ over probability measures is equivalent to minimizing the MMD
between $\phi$ and $\omega$, and the gradient flow \eqref{eq:main} is
precisely the Wasserstein gradient flow of $\operatorname{MMD}^2(\cdot,
\omega)$ with the negative-distance kernel $-|\cdot|^{1+s}$. The Fourier
transform of this kernel is $\widehat{K}(\xi) =
c_{d,s}|\xi|^{-(d+1+s)}$, which is strictly positive, confirming that
$\omega$ is the unique minimizer of the MMD and that \eqref{eq:main}
drives $\phi_t$ toward $\omega$.

In the equal-exponent setting $s=a = r$, the connection between the energy
\eqref{eq:energy} and particle approximation was further developed by
Fornasier and H{\"u}tter \cite{fornasier2016consistency}, who
established consistency of probability measure quantization via power
repulsion-attraction potentials in higher dimensions. Sharp quantitative rates for this
approximation problem have been obtained very recently by Colasanto,
Focardi, Fornasier, and Mattesini \cite{colasanto2026sharp}, who
establish two-sided bounds of the form
\[
    \operatorname{MMD}(\phi^N, \omega) \asymp N^{-\frac{1}{2}(1 + (1+s)/\beta)}
\]
for the best $N$-point empirical approximation of $\omega$ in the MMD
with power kernel $K_{1+s}(x,y) = -|x-y|^{(1+s)}$, under an Ahlfors regularity
condition of exponent $\beta$ on $\omega$. These sharp rates complement
the qualitative consistency results of
\cite{fornasier2013consistency,fornasier2016consistency} and sharpen
the dimension-independent $N^{-1/2}$ Monte Carlo rate that holds for
smooth kernels \cite{gretton2012kernel} into a geometry-dependent rate
capturing the interplay between the kernel exponent $(1+s)$ and the
intrinsic dimension $\beta$ of the target.

\subsection*{Maximum Mean Discrepancy: background}

The Maximum Mean Discrepancy was introduced in the machine learning
literature as a kernel-based two-sample test statistic
\cite{gretton2012kernel}. Given a positive-definite kernel $K$, the MMD
metrizes weak convergence for smooth kernels such as the Gaussian
$\exp(-|x-y|^2/2\sigma^2)$ or the inverse multiquadratic $(|x-y|^2 +
\sigma^2)^{-1/2}$, and enjoys the dimension-independent empirical
convergence rate $\operatorname{MMD}(\phi^N, \omega)  \asymp N^{-1/2}$, in
sharp contrast to the Wasserstein distance whose empirical rate
deteriorates as $N^{-1/d}$ \cite{fournier2015rate}. Its dual
representation $\operatorname{MMD}(\phi, \omega) = \sup_f \int f\,
\mathrm{d}(\phi - \omega)$ over the unit ball of the associated reproducing
kernel Hilbert space provides interpretable approximation guarantees for
ensemble statistics \cite{steinwart2008support}.

For the negative-distance kernel $K(x) = -|x|^q$ with $q \in (0,2)$ ($q=1+s$),
the MMD coincides with the classical energy distance and is equivalent
to a homogeneous Sobolev seminorm:
\begin{equation}\label{MMDonRd}
     \operatorname{MMD}(\phi, \omega_2) = c_{d,q}\|\phi - \omega\|_{\dot{H}^{-(d+q)/2}}.
\end{equation}
      
This connection to fractional Sobolev spaces, established in multiple contributions along many years, see
\cite{colasanto2026sharp} and references therein\footnote{The study of integral probability metrics such as the MMD is deeply rooted 
in classical work on Energy Distances in statistics and conditionally positive definite 
kernels, with a history spanning several decades. A thorough account of 
this literature falls outside the scope of the present paper; we refer 
instead to~\cite{colasanto2026sharp} for a detailed historical discussion.}, makes the negative-distance MMD
particularly well-suited as an objective for generative sampling, since
convergence in MMD controls moment convergence up to order $q/2$. 
\subsection*{Wasserstein gradient flows of MMD}

The study of Wasserstein gradient flows of MMD-type energies has
attracted considerable attention, both from the perspective of
generative models in machine learning and from that of mathematical analysis. 
To our knowledge, the first work on the description of this type of Wasserstein gradient flows may be retracted back to \cite{fornasier2013consistency}.
For smooth positive-definite kernels such as the Gaussian or inverse multiquadratic, Arbel, Gretton, Niles-Weed, and Rigollet \cite{arbel2019maximum} established that the MMD functional is
displacement $\lambda$-convex for some $\lambda \in \mathbb{R}$, which
implies well-posedness of the gradient flow in the sense of
\cite{savare2008gradientflows} and convergence of time-discrete JKO
schemes. Algorithmic aspects of the resulting generative sampler,
including sliced implementations and connections to score matching, were
further developed in \cite{hertrich2023generative}.

For the singular negative-distance kernel $K(x) = -|x|^q$ with $q \in
(0,2)$, the situation is fundamentally different. Hertrich et al.\
\cite{hertrich2023generative} showed that for $d \geq 2$ the interaction
energy is \emph{not} displacement $\lambda$-convex for any $\lambda \in
\mathbb{R}$, ruling out the classical  framework \cite{savare2008gradientflows}. Despite this, they
demonstrated strong empirical performance of the resulting generative
sampler. \\
Recent work by Chizat et al.\ \cite{chizat2026quantitative} studies the  Wasserstein gradient flow of the MMD energy on the torus $\mathbb{T}^d$, modeled by
squared homogeneous Sobolev energy
\[
  \operatorname{MMD}(\phi,\omega)
  := \tfrac{1}{2}\|\phi-\omega\|_{H_0^{-s}(\mathbb{T}^d)},
  \quad s \geq 1.
\]
The paper establishes three main results:
(i)~global well-posedness in weak regularity classes without absolute
continuity of $\phi_0$, via a Yudovich-type argument;
(ii)~for $s=1$ (Coulomb kernel), global \emph{exponential} convergence
to $\omega$ under minimal assumptions; and
(iii)~for $s>1$, local \emph{polynomial} convergence at sharp rates
$O(t^{-\alpha})$, with $\alpha$ depending explicitly on $s$ and the
Sobolev regularity of the data.
As an application, the training dynamics of infinite-width shallow ReLU
networks, recast as a Wasserstein--Fisher--Rao gradient flow on $\mathbb{S}^d$,
correspond to the case $s=(d+3)/2$, yielding explicit polynomial convergence
rates. Let us stress that for the Wasserstein gradient flow of the MMD energy on $\mathbb R^d$ \eqref{MMDonRd}, one cannot apply the same techniques as in \cite{chizat2026quantitative}, in particular no isoperimetric inequality (Polyak--\L ojasiewicz) is available in this case.
\\
The rigorous well-posedness and mean-field theory for  Wasserstein gradient flow of the MMD energy on $\mathbb R^d$ \eqref{MMDonRd} has been addressed more recently by Rosenzweig,
Slep\v{c}ev, and Wang (work in progress, known to us by private
communication \cite{rosenzweig2025mmd}), who establish global
well-posedness for initial data in $\mathcal{P}_2(\mathbb{R}^d) \cap
L^p(\mathbb{R}^d)$ with $p > d/(d+q-2)$ and {background measure $\omega\in\mathcal{P}_2(\mathbb R^d)\cap\mathcal{S}(\mathbb R^d)$}, along with a quantitative
mean-field convergence estimate showing that the initial error rate
$\operatorname{MMD}(\phi^N_t, \phi_t) \leq
e^{Ct}\operatorname{MMD}(\phi^N_0, \phi_0)$ is preserved uniformly
in time, and a negative result ruling out any uniform-in-initial-data
convergence rate to the target $\omega$.

\subsection*{Contributions and organization of this paper}
{The present work studies the discordant regime $a \neq r$,
in which the background attraction and self-repulsion exponents are allowed
to differ.} In Section~\ref{sec:wellposedness}, we establish global
Lagrangian well-posedness via a squared-radius regularization, obtaining
uniform $L^\infty$ and moment bounds, propagation of $W^{n,\infty}$
regularity, and uniqueness in the Lagrangian class. When the initial data
is compactly supported and attraction dominates, either $a > r$, or
$a = r$ with $\omega(\mathbb{R}^d) > 1$, we prove in
Section~\ref{sec:confinement} that the support of $\phi_t$ remains
confined to a fixed ball for all time; a counterexample shows this fails
for $a = r > 1$. For the nonquadratic attractive-dominant range $0 \leq r
\leq a < 1$, Section~\ref{sec:stationary} develops a free-boundary
characterization of zero-flux stationary states via a fractional Laplacian, reducing the stationarity condition
$\phi\,\nabla F_\phi = 0$ to a fractional exterior Dirichlet problem;
{this characterization yields explicit examples of stationary measures in one dimension and in two dimensions via the
Boggio--Kelvin Green kernel on disks, and in three dimensions for the uniform measure on a ball and Gaussian targets $\omega$. Numerical verifications are applied for the one and two dimensional examples, while the three dimensional case goes without numerics as it can be verified analytically.} Finally, Section~\ref{sec:longtime} establishes subsequential
convergence to zero-flux stationary states for any global solution with
bounded energy and uniform moment bounds, with full convergence in the
narrow topology following when the omega-limit set is a singleton. 
Because of its relevance for applications in machine learning and statistics, the MMD gradient flow case $a=r$ is discussed in a concluding Section \ref{sec:a=r}, where 
 we clarify more explicitly which of our results holds in this case\footnote{
We note that Rosenzweig, Slep\v{c}ev, and Wang~ have work
in progress \cite{rosenzweig2025mmd} devoted to the MMD gradient flow case $a = r$. {In order not to overlap
with their contributions, we have chosen to focus the present paper on the discordant regime $r \neq a$
and arbitrary mass $\omega(\mathbb{R}^d)$, keeping the case $a = r$ as a secondary consideration.}}.

\section{Setting, notation, and solution concepts}

For $s\in[0,1]$ set
\begin{equation}\label{eq:psiK}
        \psi_s(x):=|x|^{1+s},
        \qquad
        K_s(x):=\grad\psi_s(x)=(1+s)|x|^{s-1}x,
        \qquad x\ne0.
\end{equation}
When $d=1$ and $s=0$ we use the convention $K_0(x)=\sgn(x)$ and
$K_0(0)=0$.  The prescribed background is a nonnegative density $\omega$.
We define
\begin{equation}\label{eq:V}
        V(x):=(\psi_a*\omega)(x)
        =\int_{\R^d}|x-y|^{1+a}\omega(y)\dd y .
\end{equation}
The equation studied in this paper is
\begin{equation}\label{eq:pde}
        \partial_t\phi_t
        =\diver\Bigl(\phi_t\bigl(\grad V-K_r*\phi_t\bigr)\Bigr),
        \qquad t>0,
\end{equation}
or, equivalently,
\begin{equation}\label{eq:continuity}
        \partial_t\phi_t+\diver(\phi_t b_{\phi_t})=0,
        \qquad
        b_{\phi_t}(x):=-\grad V(x)+(K_r*\phi_t)(x).
\end{equation}
Unless explicitly stated otherwise, \(\phi_0\) is a probability density, and we
use the notation \(\phi_t\) for the time-dependent solution, \(\phi_T\) for the
numerical solution~(as computed approximately in the numerical experiments) at a terminal time \(T\), and \(\phi_*\) or \(\phi_\infty\)
for stationary limits.  The symbol \(\psi_s\) is reserved exclusively for the
interaction kernel and is not used to denote an evolving density.
\begin{assumption}[Standing hypotheses for the evolution]\label{ass:standing}
Throughout the well-posedness part we assume that there exists an exponent
$p>1$ such that
\begin{equation}\label{eq:standing1}
        \omega\ge0,
        \qquad
        \omega\in L^1(\R^d)\cap L^\infty(\R^d),
        \qquad
        \int_{\R^d}\langle x\rangle^p\omega(x)\dd x<\infty,
\end{equation}
and
\begin{equation}\label{eq:standing2}
        \phi_0\ge0,
        \qquad
        \phi_0\in L^1(\R^d)\cap L^\infty(\R^d),
        \qquad
        \int_{\R^d}\phi_0(x)\dd x=1,
        \qquad
        \int_{\R^d}\langle x\rangle^p\phi_0(x)\dd x<\infty,
\end{equation}
where $\langle x\rangle=(1+|x|^2)^{1/2}$.  No compact-support assumption is
made in the well-posedness theorem.  Compact support is imposed only in the
separate uniform-support theorem, where it is part of the conclusion to prove
that the support remains uniformly bounded for all time.
\end{assumption}

\begin{definition}[Distributional and Lagrangian solutions]\label{def:solutions}
Let $T>0$.  A nonnegative curve
\[
        \phi\in L^\infty(0,T;L^1(\R^d)\cap L^\infty(\R^d))
\]
is a distributional solution of \eqref{eq:pde} on $[0,T]$ if, for every
$\zeta\in C_c^\infty([0,T)\times\R^d)$,
\begin{equation}\label{eq:weaksol}
\int_0^T\!\int_{\R^d}
\Bigl(\partial_t\zeta(t,x)+b_{\phi_t}(x)\cdot\grad\zeta(t,x)\Bigr)
\phi_t(x)\dd x\dd t
+\int_{\R^d}\zeta(0,x)\phi_0(x)\dd x=0.
\end{equation}
It is a Lagrangian solution if there is a flow map $X_t(x)$ solving
\begin{equation}\label{eq:char}
        \dot X_t(x)=-\grad V(X_t(x))+(K_r*\phi_t)(X_t(x)),
        \qquad X_0(x)=x,
\end{equation}
for $\phi_0$-a.e. $x$, and
\begin{equation}\label{eq:pushforward}
        \phi_t=(X_t)_{\#}\phi_0.
\end{equation}
\end{definition}
In this work, we focus mainly on the case \(a,r\in[0,1)\).  The quadratic
endpoint \(a=r=1\) is simple, since the characteristic flow can be computed explicitly by a direct calculation.

The associated interaction energy is
\begin{equation}\label{eq:energy}
        \calE(\phi)
        :=\int_{\R^d}(\psi_a*\omega)(x)\dd\phi(x)
        -\frac12\iint_{\R^d\times\R^d}\psi_r(x-y)\dd\phi(x)\dd\phi(y).
\end{equation}
Its first variation is
\begin{equation}\label{eq:firstvar}
        F_\phi(x):=\frac{\delta\calE}{\delta\phi}(x)
        =\psi_a*\omega(x)-\psi_r*\phi(x),
\end{equation}
and \eqref{eq:pde} is formally
\begin{equation}\label{eq:gf-form}
        \partial_t\phi_t=\diver\bigl(\phi_t\grad F_{\phi_t}\bigr).
\end{equation}

\begin{definition}[Zero-flux stationary state]\label{def:zeroflux}
A probability measure $\phi$ is a zero-flux stationary state if
\begin{equation}\label{eq:zeroflux}
        \phi\,\grad F_\phi=0
\end{equation}
in the sense of vector-valued distributions, equivalently
\begin{equation}\label{eq:zeroflux-test}
        \int_{\R^d}\xi(x)\cdot
        \bigl(K_a*\omega(x)-K_r*\phi(x)\bigr)\dd\phi(x)=0
\end{equation}
for every $\xi\in C_c^\infty(\R^d;\R^d)$ for which the integral is well-defined.
\end{definition}

\section{Regularized kernels and uniform estimates}

The regularization used throughout the approximation argument is the one
specified by the squared radius:
\begin{equation}\label{eq:specified-mollifier}
        {\qquad
        \psi_{s,\eps}(x):=(|x|^2+\eps)^{(1+s)/2},
        \qquad
        K_{s,\eps}(x):=\grad\psi_{s,\eps}(x),
        \qquad 0<\eps\le1.
        \qquad}
\end{equation}
This is not a convolution mollification.  Its advantage is that convexity and
one-sided sign estimates are preserved exactly.

\begin{lemma}[Formulas and convexity of the squared-radius regularization]\label{lem:eps-formulas}
Let $s\in[0,1]$ and $0<\eps\le1$.  Then
\begin{equation}\label{eq:grad-eps}
        K_{s,\eps}(x)
        =(1+s)(|x|^2+\eps)^{(s-1)/2}x,
\end{equation}
\begin{equation}\label{eq:hess-eps}
        D^2\psi_{s,\eps}(x)
        =(1+s)(|x|^2+\eps)^{(s-1)/2}I
        +(1+s)(s-1)(|x|^2+\eps)^{(s-3)/2}x\otimes x,
\end{equation}
and
\begin{equation}\label{eq:lap-eps}
        \Delta\psi_{s,\eps}(x)
        =(1+s)(|x|^2+\eps)^{(s-3)/2}
        \Bigl(d\eps+(d+s-1)|x|^2\Bigr).
\end{equation}
In particular,
\begin{equation}\label{eq:convex-lap}
        D^2\psi_{s,\eps}(x)\succcurlyeq0,
        \qquad
        \Delta\psi_{s,\eps}(x)\ge0,
        \qquad x\in\R^d.
\end{equation}
Moreover, $K_{s,\eps}\to K_s$ in $L^1_{\rm loc}(\R^d)$ as $\eps\downarrow0$.
\end{lemma}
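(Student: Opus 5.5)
The proof is a direct computation followed by elementary eigenvalue and dominated-convergence arguments. We write $\rho:=|x|^2+\eps>0$, so that $\psi_{s,\eps}=\rho^{(1+s)/2}$ is $C^\infty$ on all of $\R^d$ (this is where $\eps>0$ is used).

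\textbf{Formulas.} Since $\grad\rho=2x$, the chain rule gives
\[
\grad\psi_{s,\eps}=\tfrac{1+s}{2}\rho^{(s-1)/2}\cdot 2x,
\]
which is \eqref{eq:grad-eps}. Differentiating once more with the product rule:
\begin{itemize}[label=$\cdot$]
\item differentiating $x$ produces the term $(1+s)\rho^{(s-1)/2}I$;
\item differentiating $\rho^{(s-1)/2}$ produces $(1+s)\tfrac{s-1}{2}\rho^{(s-3)/2}\,2x\otimes x$.
\end{itemize}
Together these give \eqref{eq:hess-eps}. Taking the trace, and using $\operatorname{tr}(x\otimes x)=|x|^2$, we get
\[
\Delta\psi_{s,\eps}=(1+s)\rho^{(s-3)/2}\bigl(d\rho+(s-1)|x|^2\bigr).
\]
Substituting $\rho=|x|^2+\eps$ yields \eqref{eq:lap-eps}.

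\textbf{Convexity.} The Hessian in \eqref{eq:hess-eps} has eigenvalue $(1+s)\rho^{(s-1)/2}$ on $x^\perp$ (multiplicity $d-1$). In the direction $x$ its eigenvalue is
\[
(1+s)\rho^{(s-3)/2}\bigl(\rho+(s-1)|x|^2\bigr)=(1+s)\rho^{(s-3)/2}\bigl(\eps+s|x|^2\bigr).
\]
Both eigenvalues are nonnegative because $s\ge0$, $\eps>0$ and $1+s>0$, so $D^2\psi_{s,\eps}\succcurlyeq0$. The Laplacian is then nonnegative as the trace of a positive semidefinite matrix; this can also be read off from \eqref{eq:lap-eps}, since $d+s-1\ge0$ for $d\ge1$.

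\textbf{Convergence.} We have $|K_{s,\eps}(x)|=(1+s)\rho^{(s-1)/2}|x|$. For $s\in[0,1]$ the exponent $(s-1)/2$ is $\le0$, so $\rho^{(s-1)/2}\le|x|^{s-1}$ for $x\ne0$. Hence $|K_{s,\eps}(x)|\le(1+s)|x|^s$, a bound uniform in $\eps\in(0,1]$ that is locally integrable; for $s=0$ it is bounded by $1$. Pointwise, $K_{s,\eps}(x)\to K_s(x)$ as $\eps\downarrow0$ for every $x\ne0$, i.e.\ almost everywhere, and the $d=1$, $s=0$ sign convention matters only at the null set $\{0\}$. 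Dominated convergence on each compact set then gives $K_{s,\eps}\to K_s$ in $L^1_{\rm loc}$.

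The only delicate point is the radial eigenvalue at $s=0$. There the negative term $-\rho^{-3/2}|x|^2$ in \eqref{eq:hess-eps} is exactly offset by the isotropic term, leaving $\eps\rho^{-3/2}>0$. This is the reason for regularizing via the squared radius rather than by convolution.
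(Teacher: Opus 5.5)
Your proof is correct and follows the paper's argument essentially line by line: direct differentiation, the tangential/radial eigenvalue split giving $(1+s)\rho^{(s-1)/2}$ and $(1+s)\rho^{(s-3)/2}(\eps+s|x|^2)$, and dominated convergence on compact sets. Your uniform bound $|K_{s,\eps}(x)|\le(1+s)|x|^s$ is even slightly sharper than the paper's $C_s(1+|x|^s)$. One small caveat on your closing aside: mollifying the convex function $\psi_s$ by convolution with a nonnegative kernel would also preserve convexity and $\Delta\ge0$, so that property alone does not distinguish the squared-radius regularization from convolution.
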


\begin{proof}
The identities \eqref{eq:grad-eps}--\eqref{eq:lap-eps} follow by direct
differentiation.  To prove convexity, decompose any vector $\xi$ into its
radial and tangential parts relative to $x$.  The tangential eigenvalue of
$D^2\psi_{s,\eps}$ is
\[
        (1+s)(|x|^2+\eps)^{(s-1)/2}\ge0,
\]
and the radial eigenvalue is
\[
        (1+s)(|x|^2+\eps)^{(s-3)/2}(\eps+s|x|^2)\ge0.
\]
Thus $D^2\psi_{s,\eps}$ is positive semidefinite; \eqref{eq:lap-eps} then also
shows $\Delta\psi_{s,\eps}\ge0$.  Finally, for $x\ne0$,
$K_{s,\eps}(x)\to K_s(x)$ pointwise, while
$|K_{s,\eps}(x)|\le C_s(1+|x|^s)$ and the singularity at the origin is locally
integrable.  Dominated convergence on compact sets gives
$K_{s,\eps}\to K_s$ in $L^1_{\rm loc}$.
\end{proof}

\begin{lemma}[Uniform local integrability of regularized derivatives]\label{lem:regularized-derivatives}
Let $m\ge2$, $s\in[0,1]$, and assume either
\begin{equation}\label{eq:strict-derivative-condition}
        m<d+1+s,
\end{equation}
or the endpoint $(d,s,m)=(1,0,2)$.  Then there is a constant
$C=C(d,s,m)$, independent of $\eps\in(0,1]$, such that for every $\lambda>0$,
\begin{equation}\label{eq:local-Dm-eps}
        \int_{|x|<\lambda}|D^m\psi_{s,\eps}(x)|\dd x
        \le C\bigl(1+\lambda^{d+1+s-m}\bigr),
\end{equation}
and
\begin{equation}\label{eq:far-Dm-eps}
        \sup_{|x|\ge\lambda}|D^m\psi_{s,\eps}(x)|
        \le C\lambda^{1+s-m}.
\end{equation}
Consequently, for every $f\in L^1(\R^d)\cap L^\infty(\R^d)$,
\begin{equation}\label{eq:convolution-Dm-eps}
        \sup_{0<\eps\le1}\norm{D^m\psi_{s,\eps}*f}_{L^\infty}
        \le C_{d,s,m}
        \bigl(\norm{f}_{L^1}+\norm{f}_{L^\infty}\bigr).
\end{equation}
\end{lemma}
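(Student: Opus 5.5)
The plan is to establish a pointwise bound on $D^m\psi_{s,\eps}$ that is uniform in $\eps$, integrate it, and then split convolutions into near and far parts.

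\textbf{Pointwise bound.} Write $\psi_{s,\eps}(x)=g(|x|^2+\eps)$ with $g(u)=u^{(1+s)/2}$. By induction on $m$ (Faà di Bruno), $D^m\psi_{s,\eps}(x)$ is a finite sum of terms
\[
g^{(j)}(|x|^2+\eps)\,P_j(x),
\]
where $P_j$ is a tensor polynomial, homogeneous of degree $2j-m$, and $\lceil m/2\rceil\le j\le m$. Since $|g^{(j)}(u)|\le C u^{(1+s)/2-j}$ and $|P_j(x)|\le C|x|^{2j-m}\le C(|x|^2+\eps)^{j-m/2}$, we obtain
\begin{equation*}
|D^m\psi_{s,\eps}(x)|\le C(d,s,m)\,(|x|^2+\eps)^{(1+s-m)/2}\le C\,|x|^{1+s-m}\qquad (m\ge 2,\ 1+s-m<0).
\end{equation*}
The last inequality uses $m\ge2>1+s$ when $s<1$. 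For $s=1$ and $m\ge 2$ the exponent is $\le 0$, so the same bound holds, with equality of exponent at $m=2$. This gives \eqref{eq:far-Dm-eps} immediately.

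\textbf{Local integral.} Integrating in polar coordinates,
\[
\int_{|x|<\lambda}|x|^{1+s-m}\,dx = C\lambda^{d+1+s-m},
\]
which is finite exactly when $m<d+1+s$. This proves \eqref{eq:local-Dm-eps} in the strict case.

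\textbf{Endpoint $(d,s,m)=(1,0,2)$.} This is the main obstacle, since the pointwise bound $|x|^{-1}$ is not integrable. Here I compute explicitly: $\psi''_{0,\eps}(x)=\eps(x^2+\eps)^{-3/2}\ge0$, so
\[
\int_{\R}\psi''_{0,\eps}=\bigl[\psi'_{0,\eps}\bigr]_{-\infty}^{\infty}=2,
\]
independently of $\eps$. The second derivative is an approximation of $2\delta_0$ with uniformly bounded mass, and it is bounded by $|x|^{-1}$ away from the origin. This yields \eqref{eq:local-Dm-eps} with $C(1+\lambda^0)$.

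\textbf{Convolution estimate.} Take $\lambda=1$ and split
\[
D^m\psi_{s,\eps}*f=(D^m\psi_{s,\eps}\one_{|x|<1})*f+(D^m\psi_{s,\eps}\one_{|x|\ge1})*f.
\]
By Young's inequality the first term is bounded by $C\norm{f}_{L^\infty}$. The second is bounded by $\sup_{|x|\ge1}|D^m\psi_{s,\eps}|\,\norm{f}_{L^1}\le C\norm{f}_{L^1}$. Both bounds are uniform in $\eps$, which gives \eqref{eq:convolution-Dm-eps}.

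One caveat concerns the case $s=1$, $m=2$: there the far-field bound $\lambda^{0}$ is simply a constant, so the argument is unaffected.
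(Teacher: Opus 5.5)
Your proof is correct and follows essentially the same route as the paper: the pointwise bound $|D^m\psi_{s,\eps}(x)|\le C(|x|^2+\eps)^{(1+s-m)/2}$, the explicit computation $\psi_{0,\eps}''=\eps(x^2+\eps)^{-3/2}$ with total mass $2$ at the endpoint, and the near/far split of the convolution at radius $1$. The differences are small. You bound $(|x|^2+\eps)^{(1+s-m)/2}\le|x|^{1+s-m}$ globally, since the exponent is nonpositive; this removes the need for the paper's split at $|x|=\sqrt\eps$ and its scaling identity. You also justify the pointwise bound via Fa\`a di Bruno, where the paper only asserts it.
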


\begin{proof}
The point is to keep all estimates uniform in the squared-radius regularization.
For $m\ge2$, repeated differentiation of
$(|x|^2+\eps)^{(1+s)/2}$ gives
\begin{equation}\label{eq:Dm-raw-bound}
        |D^m\psi_{s,\eps}(x)|
        \le C_{d,s,m}(|x|^2+\eps)^{(1+s-m)/2}.
\end{equation}
Equivalently, with $\delta=\sqrt\eps$,
\[
        D^m\psi_{s,\eps}(x)=\delta^{1+s-m}
        (D^m\psi_{s,1})(x/\delta).
\]
For $|x|\le\delta$, the right-hand side in \eqref{eq:Dm-raw-bound} is bounded
by $C\delta^{1+s-m}$, and hence
\[
        \int_{|x|\le\delta}|D^m\psi_{s,\eps}(x)|\dd x
        \le C\delta^{d+1+s-m}.
\]
For $\delta<|x|<\lambda$, \eqref{eq:Dm-raw-bound} gives
$|D^m\psi_{s,\eps}(x)|\le C|x|^{1+s-m}$.  Thus
\[
        \int_{\delta<|x|<\lambda}|D^m\psi_{s,\eps}(x)|\dd x
        \le C\int_0^\lambda \rho^{d+s-m}\dd\rho
        \le C\lambda^{d+1+s-m},
\]
provided $m<d+1+s$.  This proves \eqref{eq:local-Dm-eps} in the strict case.
In the endpoint $(d,s,m)=(1,0,2)$ the preceding power integral is logarithmic,
but the exact formula
\[
        \frac{\dd^2}{\dd x^2}(x^2+\eps)^{1/2}
        =\eps(x^2+\eps)^{-3/2}
\]
has $L^1(\R)$ norm equal to $2$, uniformly in $\eps$, so the endpoint is also
covered.

For $|x|\ge\lambda$, the scaling formula and the asymptotic estimate
$|D^m\psi_{s,1}(x)|\le C|x|^{1+s-m}$ for $|x|\ge1$ yield
\eqref{eq:far-Dm-eps}; if $|x|/\delta<1$, then the bound is even easier because
$1+s-m<0$ and $\delta^{1+s-m}\le C\lambda^{1+s-m}$ when $|x|\ge\lambda$.
Finally, for any $x\in\R^d$ split the convolution as
\[
D^m\psi_{s,\eps}*f(x)=
\int_{|x-y|<1}D^m\psi_{s,\eps}(x-y)f(y)\dd y
+
\int_{|x-y|\ge1}D^m\psi_{s,\eps}(x-y)f(y)\dd y .
\]
The first term is controlled by \eqref{eq:local-Dm-eps} with $\lambda=1$ and
$\norm{f}_{L^\infty}$; the second is controlled by \eqref{eq:far-Dm-eps} with
$\lambda=1$ and $\norm{f}_{L^1}$.  This proves
\eqref{eq:convolution-Dm-eps}.
\end{proof}

\begin{lemma}[Averaged Lipschitz estimate for the singular force]\label{lem:averaged-lip}
Let $r\in[0,1]$, $\sigma\in L^1(\R^d)\cap L^\infty(\R^d)$, $\sigma\ge0$.
There exists
$C=C(d,r,\norm{\sigma}_{L^1},\norm{\sigma}_{L^\infty})$ such that, for every
$0<\eta\le1$,
\begin{equation}\label{eq:avg-lip}
        \sup_{x\in\R^d}
        \int_{\R^d}\sup_{|h|\le\eta}
        |K_r(x-y+h)-K_r(x-y)|\sigma(y)\dd y
        \le C\eta .
\end{equation}
The endpoint $d=1$, $r=0$ is included with $K_0=\sgn$.
\end{lemma}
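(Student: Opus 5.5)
We fix $x$ and substitute $z=x-y$. It then suffices to bound
\[
\int_{\R^d}\sup_{|h|\le\eta}|K_r(z+h)-K_r(z)|\,\sigma(x-z)\dd z
\]
uniformly in $x$, splitting the integral into the near region $|z|\le2\eta$ and the far region $|z|>2\eta$. On the near region we use $\norm{\sigma}_{L^\infty}$, and on the far region we use a pointwise derivative bound. We further split the far region at $|z|=1$, controlling the part $2\eta<|z|<1$ by $\norm{\sigma}_{L^\infty}$ and the part $|z|\ge1$ by $\norm{\sigma}_{L^1}$. This is the same near/far decomposition as in the proof of Lemma~\ref{lem:regularized-derivatives}.

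\emph{Near region.} If $|z|\le2\eta$ and $|h|\le\eta$, then $|z+h|\le3\eta\le3$. Since $|K_r(w)|=(1+r)|w|^r$, the difference is bounded by $2(1+r)(3\eta)^r\le C$. The near contribution is therefore at most $C\norm{\sigma}_{L^\infty}\,|B_{2\eta}|\le C\eta^d\le C\eta$, because $d\ge1$ and $\eta\le1$. The endpoint $d=1$, $r=0$, where $K_0=\sgn$, is covered by the same bound $|\sgn(\cdot)-\sgn(\cdot)|\le2$.

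\emph{Far region.} If $|z|>2\eta$ and $|h|\le\eta$, the segment $[z,z+h]$ stays in $\{|w|\ge|z|/2\}$, so it avoids the origin. By the mean value theorem and $|DK_r(w)|\le C|w|^{r-1}$ (which follows from the Hessian formula \eqref{eq:hess-eps} with $\eps=0$), we get
\[
\sup_{|h|\le\eta}|K_r(z+h)-K_r(z)|\le C\eta|z|^{r-1}.
\]
For $d=1$, $r=0$ the difference is in fact $0$, since the sign does not change. On $|z|\ge1$ we have $|z|^{r-1}\le1$, giving a contribution $\le C\eta\norm{\sigma}_{L^1}$. On $2\eta<|z|<1$ the contribution is at most
\[
C\eta\norm{\sigma}_{L^\infty}\int_{|z|<1}|z|^{r-1}\dd z=C\eta\norm{\sigma}_{L^\infty}\int_0^1\rho^{d+r-2}\dd\rho .
\]

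\emph{Main obstacle.} The delicate point is the last integral: it is finite exactly when $d+r-1>0$, i.e. in every case except $d=1$, $r=0$. In that endpoint $K_0=\sgn$ has zero derivative away from the origin, so the middle region contributes nothing and no logarithm appears. This mirrors the endpoint discussion in Lemma~\ref{lem:regularized-derivatives}. Adding the near contribution and the two far contributions yields \eqref{eq:avg-lip}, with $C$ depending only on $d$, $r$, $\norm{\sigma}_{L^1}$ and $\norm{\sigma}_{L^\infty}$.
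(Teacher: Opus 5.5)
Your proof is correct and follows essentially the same three-region decomposition as the paper: a ball of radius $2\eta$, an intermediate annulus controlled by $\norm{\sigma}_{L^\infty}$ and the integrability of $|z|^{r-1}$ when $d+r-1>0$, and a far region controlled by $\norm{\sigma}_{L^1}$. The differences are only cosmetic: you bound the near-region difference by a constant and use $\eta^d\le\eta$, and you handle the sign kernel by noting that the far-region difference vanishes, whereas the paper observes directly that it is supported in $|x-y|\le\eta$.
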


\begin{proof}
Fix $x\in\R^d$ and split the $y$-integral into the three regions
$|x-y|\le2\eta$, $2\eta<|x-y|\le2$, and $|x-y|>2$.

On $|x-y|\le2\eta$, if $0<r\le1$, then $|K_r(x-y+h)-K_r(x-y)|\le
C\eta^r$ for $|h|\le\eta$.  Therefore this region contributes at most
$C\eta^{d+r}\norm{\sigma}_{L^\infty}\le C\eta$.  If $r=0$ and $d\ge2$,
$K_0$ is bounded and the same region contributes at most
$C\eta^d\norm{\sigma}_{L^\infty}\le C\eta$.

On $2\eta<|x-y|\le2$, the segment joining $x-y$ to $x-y+h$ stays away from the
origin.  Since $|DK_r(x-y+\theta h)|\le C|x-y|^{r-1}$ for $0\le\theta\le1$, the mean-value theorem gives
\[
        |K_r(x-y+h)-K_r(x-y)|\le C\eta |x-y|^{r-1}.
\]
Hence this annulus contributes at most
\[
        C\eta\norm{\sigma}_{L^\infty}
        \int_{2\eta<|x-y|\le2}|x-y|^{r-1}\dd y
        \le C\eta,
\]
because the last integral is uniformly bounded when $d+r-1>0$.

On $|x-y|>2$, the derivative $DK_r$ is bounded along the same segment, and the
contribution is at most $C\eta\norm{\sigma}_{L^1}$.

It remains only to treat the one-dimensional sign kernel $d=1$, $r=0$.  In this
case $|\sgn(x-y+h)-\sgn(x-y)|$ can be nonzero only when $|x-y|\le\eta$.
Therefore
\[
        \int_\R\sup_{|h|\le\eta}
        |\sgn(x-y+h)-\sgn(x-y)|\sigma(y)\dd y
        \le 2\int_{|x-y|\le\eta}\sigma(y)\dd y
        \le4\eta\norm{\sigma}_{L^\infty}.
\]
Combining the estimates and then taking the supremum over $x$ proves
\eqref{eq:avg-lip}.
\end{proof}

\section{Well-posedness and Sobolev propagation}\label{sec:wellposedness}

\begin{theorem}[Global Lagrangian well-posedness via the squared-radius regularization]\label{thm:wellposed}
Let $a,r\in[0,1]$ and let Assumption \ref{ass:standing} hold.  Then for every
$T>0$ there exists a nonnegative Lagrangian distributional solution of
\eqref{eq:pde} on $[0,T]$.  It conserves mass,
\begin{equation}\label{eq:mass-cons}
        \int_{\R^d}\phi_t(x)\dd x=1,
        \qquad 0\le t\le T,
\end{equation}
satisfies the finite-time maximum bound
\begin{equation}\label{eq:Linfty-main}
        \norm{\phi_t}_{L^\infty}
        \le
        \exp\!\bigl(t C_{d,a}(\norm{\omega}_{L^1}+\norm{\omega}_{L^\infty})\bigr)
        \norm{\phi_0}_{L^\infty},
        \qquad 0\le t\le T,
\end{equation}
and satisfies the moment estimate
\begin{equation}\label{eq:p-moment-main}
        \sup_{0\le t\le T}\int_{\R^d}\langle x\rangle^p\phi_t(x)\dd x
        \le C_T\left(1+
        \int_{\R^d}\langle x\rangle^p\phi_0(x)\dd x
        +\int_{\R^d}\langle x\rangle^p\omega(x)\dd x\right).
\end{equation}
The solution is unique in the class of Lagrangian distributional solutions
satisfying the bounds \eqref{eq:Linfty-main}--\eqref{eq:p-moment-main} on each
finite time interval.
\end{theorem}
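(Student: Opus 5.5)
We prove existence through the squared-radius regularization \eqref{eq:specified-mollifier}. The plan is to get estimates that are uniform in $\eps$ and then pass to the limit in the Lagrangian formulation.

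\textbf{Step 1: regularized problem.} For fixed $\eps\in(0,1]$ consider
\[
\partial_t\phi^\eps_t=\diver\bigl(\phi^\eps_t(\grad V_\eps-K_{r,\eps}*\phi^\eps_t)\bigr),\qquad V_\eps:=\psi_{a,\eps}*\omega .
\]
By Lemma \ref{lem:regularized-derivatives} with $m=2$, the Hessians $D^2\psi_{a,\eps}*\omega$ and $D^2\psi_{r,\eps}*\phi$ are bounded in $L^\infty$, uniformly in $\eps$, in terms of $L^1\cap L^\infty$ norms. The case $m=2<d+1+s$, or the endpoint $(1,0,2)$, is always admissible. Hence, as long as $\norm{\phi^\eps_t}_{L^\infty}$ stays bounded, the velocity field is globally Lipschitz and grows at most like $C(1+|x|^{\max(a,r)})$. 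A standard fixed point on $C([0,T_0];W_1)$ for the flow map, with $\phi^\eps_t=(X^\eps_t)_\#\phi_0$, gives a local Lagrangian solution. The a priori bounds below then extend it to $[0,T]$.

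\textbf{Step 2: uniform a priori bounds.}
\begin{itemize}
\item Mass is conserved because the solution is a push-forward.
\item For \eqref{eq:Linfty-main}, along characteristics
\[
\frac{d}{dt}\log\phi^\eps_t(X_t)=\Delta V_\eps(X_t)-\Delta\psi_{r,\eps}*\phi^\eps_t(X_t).
\]
By \eqref{eq:convex-lap} the repulsive term is $\ge0$, so dropping it only helps. This is exactly why the sign-preserving regularization is used. The remaining term $\Delta V_\eps$ is bounded by $C_{d,a}(\norm{\omega}_{L^1}+\norm{\omega}_{L^\infty})$ via \eqref{eq:convolution-Dm-eps}. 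Gronwall then gives \eqref{eq:Linfty-main} independently of $\eps$.
\item For \eqref{eq:p-moment-main}, differentiate $\int\langle x\rangle^p\phi^\eps_t$. Use $|\grad\langle x\rangle^p|\le p\langle x\rangle^{p-1}$ and
\[
|K_{s,\eps}*\mu(x)|\le C\int(1+|x-y|^s)\dd\mu(y)\le C\langle x\rangle^s\Bigl(1+\int\langle y\rangle\dd\mu\Bigr).
\]
Since $s\le1$, $p>1$, and $\langle x\rangle^{p-1+s}\le\langle x\rangle^{p}$, one gets a linear Gronwall inequality with constant depending on $T$ and on the $p$-moments of $\phi_0$ and $\omega$.
\end{itemize}
These bounds also close the continuation in Step 1.

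\textbf{Step 3: compactness and passage to the limit.} The velocities $b^\eps$ are Lipschitz uniformly in $\eps$ on $[0,T]$ and have locally uniform bounds, so the flows $X^\eps$ are equicontinuous in $(t,x)$ and locally bounded. Arzel\`a--Ascoli gives $X^{\eps_k}\to X$ locally uniformly. The $L^\infty$ bound persists by weak-$*$ lower semicontinuity, and the moment bound persists by Fatou. The key point is convergence of the forces:
\[
K_{r,\eps}*\phi^\eps_t(X^\eps_t)\to K_r*\phi_t(X_t).
\]
We split the error into three pieces:
\begin{itemize}
\item $(K_{r,\eps}-K_r)*\phi^\eps_t$, which is small because $K_{r,\eps}\to K_r$ in $L^1_{\rm loc}$ (Lemma \ref{lem:eps-formulas}) and $\phi^\eps$ is uniformly bounded in $L^\infty$, with moments controlling the tails;
\item $K_r*(\phi^\eps_t-\phi_t)$ evaluated at a point, where we write $\int K_r(z-X^\eps_t(y))-K_r(z-X_t(y))\dd\phi_0(y)$ and use Lemma \ref{lem:averaged-lip} together with the uniform convergence of the flows;
\item the change of evaluation point, which is handled the same way.
\end{itemize}
This yields the integral equation \eqref{eq:char} and, by the push-forward, the weak formulation \eqref{eq:weaksol}. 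The term $\grad V_\eps\to\grad V$ is simpler, since it converges locally uniformly. The case $r=0$ is the main obstacle: $K_0$ is discontinuous, so pointwise limits of forces fail. I expect the averaged Lipschitz estimate to be essential there, and the $L^\infty$ bound on the limit density is what makes it applicable.

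\textbf{Step 4: uniqueness.} Take two Lagrangian solutions $X,Y$ with the stated bounds and let $\delta(t)=\int|X_t-Y_t|\dd\phi_0$. The confinement part satisfies $|\grad V(X)-\grad V(Y)|\le L|X-Y|$, since $D^2V$ is bounded by Lemma \ref{lem:regularized-derivatives} in the limit $\eps\to0$. For the repulsion write
\[
K_r*\phi_t(X_t(x))-K_r*\tilde\phi_t(Y_t(x))=\int\bigl[K_r(X_t(x)-X_t(y))-K_r(Y_t(x)-Y_t(y))\bigr]\dd\phi_0(y)
\]
and insert intermediate points. Each resulting difference is controlled through Lemma \ref{lem:averaged-lip}, applied with $\sigma=\phi_t$ or $\tilde\phi_t$, which are bounded in $L^\infty$. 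Getting a linear rather than log-Lipschitz control here is delicate. I would first try to bound the difference by $C\,|X_t(x)-Y_t(x)|$ averaged in $x$, using the local $W^{1,1}$ character of $K_r$ (a BV jump when $d=1$, $r=0$) in the form $\int|K_r(z-u+h)-K_r(z-u)|\sigma(u)\dd u\le C|h|$. This should give $\dot\delta\le C\delta$, hence $\delta\equiv0$. If that fails, I would fall back on a Yudovich/Osgood-type argument based on an $\eta$-dependent version of the same splitting.
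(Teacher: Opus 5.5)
Your proposal is correct and follows the paper's skeleton: the squared-radius regularization, the $L^\infty$ bound from $\Delta\psi_{r,\eps}\ge0$, the $\eps$-uniform Lipschitz bound on $B^\eps_t$ from Lemma~\ref{lem:regularized-derivatives}, and Arzel\`a--Ascoli for the flows. It departs from the paper at the two delicate points.

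\emph{Identification of the force.} The paper proves $K_{s,\eps}*\phi^\eps_t\to K_s*\phi_t$ in $L^1(0,T;L^\infty(Q))$. It does so by cutting the kernel at scale $\delta$ and using uniform-in-time narrow convergence (Arzel\`a--Ascoli in $(\mathcal K_C,d_{\rm BL})$) together with moment tails. Only afterwards does it show $\phi_t=(X_t)_\#\phi_0$. Because you push forward the \emph{same} $\phi_0$, you can define $\phi_t:=(X_t)_\#\phi_0$ and compare flows directly,
\[
K_r*\phi^\eps_t(z)-K_r*\phi_t(z)=\int\bigl[K_r(z-X^\eps_t(y))-K_r(z-X_t(y))\bigr]\phi_0(y)\dd y ,
\]
using the sup-in-$h$ form of Lemma~\ref{lem:averaged-lip}. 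This makes the Lagrangian representation automatic and gives exactly the constant $\norm{\phi_0}_{L^\infty}$ in \eqref{eq:Linfty-main}. You must still split $y\in B_R$ from $y\notin B_R$, since the flows converge only locally uniformly. The tail is then controlled by $|X^\eps_t(y)|\le C_T(1+|y|)$ and the $p$-moment of $\phi_0$.

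\emph{Uniqueness.} The paper works with $D_R(t)=\operatorname{ess\,sup}_{B_R}|X_t-Y_t|$. This needs the supremum over $h$ inside Lemma~\ref{lem:averaged-lip}, a tail term $\Theta_R$, and a continuity argument. Your $L^1(\phi_0)$ quantity $\delta(t)$ closes linearly with no Osgood fallback. Pass through the intermediate point $K_r(Y_s(x)-X_s(y))$. Integrate the first difference first in $y$, changing variables with $\phi_s$. Integrate the second difference first in $x$, using $\tilde\phi_s$ and the oddness of $K_r$. Each inner integral is then at most $C|X_s-Y_s|$ evaluated at the remaining variable: use Lemma~\ref{lem:averaged-lip} for $|h|\le1$, and global H\"older bounds or boundedness of $K_r$ for $|h|>1$. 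Tonelli gives $\delta(t)\le C\int_0^t\delta$, which is simpler than the paper's localization.

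\emph{One step needs fixing.} In the moment estimate you use the multiplicative bound $C\langle x\rangle^s\bigl(1+\int\langle y\rangle\dd\mu\bigr)$. With $\mu=\phi^\eps_t$ this only gives $\frac{\dd}{\dd t}M_p\lesssim M_p\bigl(1+M_p^{1/p}\bigr)$, which is superlinear and does not close on $[0,T]$. Use instead an additive bound as in \eqref{eq:force-growth}, e.g.\ $C\bigl(\langle x\rangle^s+\int\langle y\rangle^s\dd\mu\bigr)$. Jensen then gives
\[
\int\langle x\rangle^{p-1}\dd\phi^\eps_t\int\langle y\rangle^s\dd\phi^\eps_t\le M_p^{(p-1+s)/p}\le 1+M_p ,
\]
so the Gronwall inequality is genuinely linear and yields \eqref{eq:p-moment-main}.
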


\begin{proof}
The proof maintains the squared-radius regularization throughout and develops the compactness argument needed to pass to the limit.\\
\smallskip
\noindent\emph{Step 1: the regularized equation and consistent notation.}
For $0<\eps\le1$, define
\begin{equation}\label{eq:regularized-potentials}
        V_\eps:=\psi_{a,\eps}*\omega,
        \qquad
        K_{r,\eps}:=\grad\psi_{r,\eps},
        \qquad
        B_t^\eps:=\grad V_\eps-K_{r,\eps}*\phi_t^\eps.
\end{equation}
The velocity in the continuity equation is $-B_t^\eps$.  Choose smooth
nonnegative initial data $\phi_{0,\eps}$ with mass one such that
\begin{equation}\label{eq:initial-approx}
        \phi_{0,\eps}\to\phi_0\quad\text{in }L^1(\R^d),
        \qquad
        \norm{\phi_{0,\eps}}_{L^\infty}\le \norm{\phi_0}_{L^\infty}+1,
\end{equation}
and
\begin{equation}\label{eq:initial-moment-approx}
        \sup_{0<\eps\le1}\int_{\R^d}\langle x\rangle^p\phi_{0,\eps}(x)\dd x<\infty,
        \qquad
        \int\langle x\rangle^p\phi_{0,\eps}\to
        \int\langle x\rangle^p\phi_0 .
\end{equation}
For instance one may truncate $\phi_0$, mollify, and renormalize.  The
regularized equation is
\begin{equation}\label{eq:regularized-pde}
        \partial_t\phi_t^\eps=\diver(\phi_t^\eps B_t^\eps),
        \qquad
        \phi^\eps_{t=0}=\phi_{0,\eps},
\end{equation}
or equivalently
\begin{equation}\label{eq:regularized-flow}
        \dot X_t^\eps(x)=-B_t^\eps(X_t^\eps(x)),
        \qquad
        \phi_t^\eps=(X_t^\eps)_{\#}\phi_{0,\eps}.
\end{equation}
For fixed $\eps>0$, the vector field is smooth and has at most linear growth,
because $|K_{s,\eps}(x)|\le C_s(1+|x|)$.  Classical Picard iteration therefore
gives a global smooth Lagrangian solution.  Positivity and mass conservation
follow from the push-forward formula.

\smallskip
\noindent\emph{Step 2: the uniform maximum estimate.}
Let $M_\eps(t):=\norm{\phi_t^\eps}_{L^\infty}$.  At a spatial maximum point of
$\phi_t^\eps$,
\[
\partial_t\phi_t^\eps
=\grad\phi_t^\eps\cdot B_t^\eps
+\phi_t^\eps\Bigl(\Delta V_\eps-(\Delta\psi_{r,\eps})*\phi_t^\eps\Bigr).
\]
The gradient term vanishes, and Lemma \ref{lem:eps-formulas} gives
$\Delta\psi_{r,\eps}\ge0$.  Thus
\begin{equation}\label{eq:max-est-eps}
        \frac{\dd}{\dd t}M_\eps(t)
        \le \norm{\Delta V_\eps}_{L^\infty}M_\eps(t).
\end{equation}
By Lemma \ref{lem:regularized-derivatives},
\begin{equation}\label{eq:DeltaVeps-uniform}
        \sup_{0<\eps\le1}\norm{\Delta V_\eps}_{L^\infty}
        \le C_{d,a}(\norm{\omega}_{L^1}+\norm{\omega}_{L^\infty}).
\end{equation}
Gronwall's lemma and \eqref{eq:initial-approx} give the uniform bound
\begin{equation}\label{eq:Linfty-eps}
        \sup_{0\le t\le T}\norm{\phi_t^\eps}_{L^\infty}
        \le C_T.
\end{equation}

\smallskip
\noindent\emph{Step 3: the moment estimate from the Lagrangian flow.}  For
$s\in[0,1]$ and every finite nonnegative measure $\mu$ with finite first moment,
\begin{equation}\label{eq:force-growth}
        |K_{s,\eps}*\mu(x)|
        \le C_s\int_{\R^d}(1+|x-y|)\dd\mu(y)
        \le C_s\bigl(1+|x|+\int |y|\dd\mu(y)\bigr),
\end{equation}
with a constant independent of $\eps$.  Hence, using $\int\phi_t^\eps=1$ and
H\"older's inequality,
\begin{equation}\label{eq:B-growth-moment}
        |B_t^\eps(x)|
        \le C\Bigl(1+|x|+\bigl(M_p^\eps(t)\bigr)^{1/p}
        +M_{p,\omega}^{1/p}\Bigr),
\end{equation}
where
\[
        M_p^\eps(t):=\int_{\R^d}\langle x\rangle^p\phi_t^\eps(x)\dd x,
        \qquad
        M_{p,\omega}:=\int_{\R^d}\langle x\rangle^p\omega(x)\dd x .
\]
Since $\phi_t^\eps=(X_t^\eps)_\#\phi_{0,\eps}$,
\begin{equation}\label{eq:moment-flow-representation}
        M_p^\eps(t)
        =\int_{\R^d}\langle X_t^\eps(x)\rangle^p
        \phi_{0,\eps}(x)\dd x .
\end{equation}
For a characteristic $X_t^\eps(x)$ we have, for a.e. $t$,
\begin{align}
\frac{\dd}{\dd t}\langle X_t^\eps(x)\rangle^p
&=-p\langle X_t^\eps(x)\rangle^{p-2}
        X_t^\eps(x)\cdot B_t^\eps(X_t^\eps(x)) \notag\\
&\le C\langle X_t^\eps(x)\rangle^{p-1}
        |B_t^\eps(X_t^\eps(x))| .
\label{eq:char-weight-derivative}
\end{align}
Inserting \eqref{eq:B-growth-moment}, integrating with respect to
$\phi_{0,\eps}$, and using \eqref{eq:moment-flow-representation}, gives
\begin{align}
\frac{\dd}{\dd t}M_p^\eps(t)
&\le C\int\langle X_t^\eps\rangle^{p-1}
\Bigl(1+|X_t^\eps|+\bigl(M_p^\eps(t)\bigr)^{1/p}
        +M_{p,\omega}^{1/p}\Bigr)\phi_{0,\eps}\dd x \notag\\
&\le C\Bigl(1+M_p^\eps(t)
        +\bigl(M_p^\eps(t)\bigr)^{(p-1)/p}
        \bigl(M_p^\eps(t)\bigr)^{1/p}
        +M_{p,\omega}^{1/p}\bigl(M_p^\eps(t)\bigr)^{(p-1)/p}\Bigr) \notag\\
&\le C_T\bigl(1+M_p^\eps(t)\bigr),
\label{eq:moment-ode}
\end{align}
where the last line uses Young's inequality and $p>1$.  Gronwall's lemma and
\eqref{eq:initial-moment-approx} imply
\begin{equation}\label{eq:moment-eps}
        \sup_{0<\eps\le1}\sup_{0\le t\le T}M_p^\eps(t)\le C_T .
\end{equation}

\smallskip
\noindent\emph{Step 4: compactness of the measures and passage to the PDE.}
Let $d_{\rm BL}$ be the bounded-Lipschitz metric for narrow convergence.  The
moment bound \eqref{eq:moment-eps} implies tightness of
$\{\phi_t^\eps:0\le t\le T,\ 0<\eps\le1\}$.  By Prokhorov's theorem, together
with lower semicontinuity of moments, the set
\[
        \mathcal K_C:=\left\{\mu\in\calP(\R^d):
        \int\langle x\rangle^p\dd\mu\le C\right\}
\]
is compact for the narrow topology.  Equivalently, in Wasserstein language,
uniform $p$-moment bounds imply relative compactness for all $W_q$ with
$0<q<p$; see the compactness criterion for Wasserstein spaces
\cite[Proposition 7.1.5]{savare2008gradientflows}.  Moreover, for every
$f\in C_b^1(\R^d)$ with $\norm{f}_{L^\infty}+\Lip(f)\le1$,
\begin{equation}\label{eq:time-equicont}
        \left|\frac{\dd}{\dd t}\int f\dd\phi_t^\eps\right|
        =\left|\int \grad f\cdot(-B_t^\eps)\dd\phi_t^\eps\right|
        \le C_T,
\end{equation}
by \eqref{eq:force-growth} and \eqref{eq:moment-eps}.  Hence
$t\mapsto\phi_t^\eps$ is equicontinuous as a curve in the compact metric space
$(\mathcal K_C,d_{\rm BL})$.  Arzela-Ascoli gives a subsequence, not relabeled,
and a narrowly continuous curve $t\mapsto\phi_t$ such that
\begin{equation}\label{eq:eps-limit-phi}
        \sup_{0\le t\le T}d_{\rm BL}(\phi_t^\eps,\phi_t)\to0.
\end{equation}
In addition, \eqref{eq:Linfty-eps} gives, by Banach-Alaoglu and a diagonal
argument over balls,
\begin{equation}\label{eq:weakstar-local}
        \phi^\eps\overset{*}{\rightharpoonup}\phi
        \quad\text{in }L^\infty((0,T)\times B_R)
        \quad\text{for every }R<\infty.
\end{equation}
{
It remains to identify the nonlinear force in the weak formulation.
Fix a compact set \(Q\Subset \mathbb R^d\). We prove that, for every
\(s\in[0,1]\),
\begin{equation}
        K_{s,\varepsilon}*\varphi_t^\varepsilon
        \longrightarrow K_s*\varphi_t
        \quad\text{in }L^1(0,T;L^\infty(Q)).
\end{equation}
This is the only point in the compactness argument where one has to keep
track of the regularization error.

Let \(\chi\in C_c^\infty(\mathbb R^d)\) satisfy
\(0\leq \chi\leq 1\), \(\chi=1\) on \(B_1(0)\), and
\(\chi=0\) outside \(B_2(0)\). For \(\delta>0\), write
\[
        K_s^{<\delta}(z):=\chi(z/\delta)K_s(z),
        \qquad
        K_s^{\geq\delta}(z):=(1-\chi(z/\delta))K_s(z),
\]
and define \(K_{s,\varepsilon}^{<\delta}\) and
\(K_{s,\varepsilon}^{\geq\delta}\) in the same way, with \(K_s\) replaced
by \(K_{s,\varepsilon}\). We decompose
\begin{equation}\label{eq:322}
\begin{aligned}
K_{s,\varepsilon}*\varphi_t^\varepsilon
        -K_s*\varphi_t
&=
K_{s,\varepsilon}^{<\delta}*\varphi_t^\varepsilon
        -K_s^{<\delta}*\varphi_t                                    \\
&\quad+
\bigl(K_{s,\varepsilon}^{\geq\delta}
        -K_s^{\geq\delta}\bigr)*\varphi_t^\varepsilon                 \\
&\quad+
K_s^{\geq\delta}*(\varphi_t^\varepsilon-\varphi_t).
\end{aligned}
\end{equation}

We first estimate the singular part. Since
\[
        |K_{s,\varepsilon}(z)|\leq C_s(1+|z|^s),
        \qquad
        |K_s(z)|\leq C_s(1+|z|^s),
\]
and the kernels \(K_{s,\varepsilon}^{<\delta}\) and
\(K_s^{<\delta}\) are supported in \(B_{2\delta}(0)\), the uniform
\(L^\infty\)-bound on \(\varphi_t^\varepsilon\), together with the weak-\(*\) lower semicontinuity of the \(L^\infty\)-norm, which gives the \(L^\infty\)-bound for \(\varphi_t\), gives
\begin{equation}
\begin{aligned}
\sup_{t\in[0,T]}\sup_{x\in Q}
\left|K_{s,\varepsilon}^{<\delta}*\varphi_t^\varepsilon(x)\right|
&\leq
C_T\int_{|z|<2\delta} (1+|z|^s)\,dz,                                \\
\sup_{t\in[0,T]}\sup_{x\in Q}
\left|K_s^{<\delta}*\varphi_t(x)\right|
&\leq
C_T\int_{|z|<2\delta} (1+|z|^s)\,dz.
\end{aligned}
\end{equation}
Hence both singular terms tend to zero as \(\delta\downarrow0\), uniformly for
\(t\in[0,T]\) and \(\varepsilon\in(0,1]\).

Next we control the regularization error away from the singularity. For
fixed \(\delta>0\),
\begin{equation}
        \eta_\delta(\varepsilon)
        :=
        \sup_{|z|\geq\delta}
        \frac{|K_{s,\varepsilon}(z)-K_s(z)|}{1+|z|^s}
        \longrightarrow 0
        \qquad\text{as }\varepsilon\downarrow0.
\end{equation}
Indeed, for \(z\neq0\),
\[
K_{s,\varepsilon}(z)
=
(1+s)(|z|^2+\varepsilon)^{(s-1)/2}z,
\qquad
K_s(z)
=
(1+s)|z|^{s-1}z,
\]
and the convergence is uniform on \(\{|z|\geq\delta\}\) after division by
\(1+|z|^s\). Therefore, using the uniform \(p\)-moment bound and
\(s\leq1<p\),
\begin{equation}
    \begin{aligned}
\sup_{t\in[0,T]}\sup_{x\in Q}
\left|
\bigl(K_{s,\varepsilon}^{\geq\delta}
        -K_s^{\geq\delta}\bigr)*\varphi_t^\varepsilon(x)
\right|
&\leq
\eta_\delta(\varepsilon)
\sup_{t\in[0,T]}\int_{\mathbb R^d}
        \bigl(1+|x-y|^s\bigr)\varphi_t^\varepsilon(y)\,dy             \\
&\leq
C_{T,Q}\eta_\delta(\varepsilon)
\longrightarrow0
\qquad\text{as }\varepsilon\downarrow0 .
\end{aligned}
\end{equation}

It remains to pass to the limit in the last term of \eqref{eq:322}. For fixed
\(\delta>0\), the kernel \(K_s^{\geq\delta}\) is continuous and satisfies
the growth bound
\begin{equation}
    |K_s^{\geq\delta}(x-y)|\leq C_{\delta,Q}(1+|y|^s),
        \qquad x\in Q .
\end{equation}
We claim that
\begin{equation}\label{eq:327}
    K_s^{\geq\delta}*\varphi_t^\varepsilon
        \longrightarrow
        K_s^{\geq\delta}*\varphi_t
        \quad\text{in }L^\infty(0,T;L^\infty(Q)).
\end{equation}
To prove this, let \(R>1\) and choose a cutoff
\(\theta_R\in C_c^\infty(\mathbb R^d)\) with
\(\theta_R=1\) on \(B_R(0)\) and \(\theta_R=0\) outside \(B_{2R}(0)\).
For \(x\in Q\), the functions
\[
        y\mapsto \theta_R(y)K_s^{\geq\delta}(x-y)
\]
form an equibounded and equi-Lipschitz family. Since
\[
        \sup_{t\in[0,T]}d_{\mathrm{BL}}
        \bigl(\varphi_t^\varepsilon,\varphi_t\bigr)\to0,
\]
we get
\begin{equation}\label{eq:328}
    \sup_{t\in[0,T]}\sup_{x\in Q}
\left|
\int \theta_R(y)K_s^{\geq\delta}(x-y)
        \,d(\varphi_t^\varepsilon-\varphi_t)(y)
\right|
\longrightarrow0
\qquad\text{as }\varepsilon\downarrow0
\end{equation}
for each fixed \(R\). On the complement of \(B_R(0)\), the uniform
\(p\)-moment bound gives
\begin{equation}\label{eq:329}
    \begin{aligned}
\sup_{t\in[0,T]}\sup_{x\in Q}
\int_{|y|>R}
        |K_s^{\geq\delta}(x-y)|
        \,d\varphi_t^\varepsilon(y)
&\leq
C_{Q,\delta}R^{s-p}
        \sup_{t\in[0,T]}\int \langle y\rangle^p
        d\varphi_t^\varepsilon(y),                                  \\
\sup_{t\in[0,T]}\sup_{x\in Q}
\int_{|y|>R}
        |K_s^{\geq\delta}(x-y)|
        \,d\varphi_t(y)
&\leq
C_{Q,\delta}R^{s-p}
        \sup_{t\in[0,T]}\int \langle y\rangle^p
        d\varphi_t(y).
\end{aligned}
\end{equation}
Since \(p>s\), the right-hand sides tend to zero as \(R\to\infty\),
uniformly in \(\varepsilon\) and \(t\). Combining \eqref{eq:328} and
\eqref{eq:329} proves \eqref{eq:327}.

Putting together \eqref{eq:322}--\eqref{eq:327}, we first let
\(\varepsilon\downarrow0\) for fixed \(\delta\), and then let
\(\delta\downarrow0\). This proves
\[
        K_{s,\varepsilon}*\varphi_t^\varepsilon
        \longrightarrow
        K_s*\varphi_t
        \quad\text{in }L^1(0,T;L^\infty(Q)).
\tag{3.30}
\]
The same argument, with the fixed measure \(\omega(y)\,dy\) in place of
\(\varphi_t^\varepsilon\), gives
\[
        K_{a,\varepsilon}*\omega
        \longrightarrow
        K_a*\omega
        \quad\text{in }L^1(0,T;L^\infty(Q)).
\tag{3.31}
\]
Therefore
\begin{equation}
    \label{eq:force-conv-local}
        B_t^\varepsilon
        =
        K_{a,\varepsilon}*\omega
        -
        K_{r,\varepsilon}*\varphi_t^\varepsilon
        \longrightarrow
        K_a*\omega
        -
        K_r*\varphi_t
        =
        B_t
        \quad\text{in }L^1(0,T;L^\infty(Q)).
\end{equation}
Passing to the limit in the weak formulation of the regularized equation
then gives the distributional formulation \eqref{eq:weaksol}. The mass,
positivity, maximum bound, and moment bound pass to the limit by narrow
lower semicontinuity and weak-\(*\) compactness.
}

\smallskip
\noindent\emph{Step 5: compactness of the flows and the Lagrangian representation.}
Fix $R<\infty$.  We first prove the equi-Lipschitz estimates for the flows on
$[0,T]\times B_R$.  From \eqref{eq:B-growth-moment} and
\eqref{eq:moment-eps}, every characteristic starting in $B_R$ satisfies
\begin{equation}\label{eq:flow-growth-ode}
        \frac{\dd}{\dd t}|X_t^\eps(x)|
        \le C_T(1+|X_t^\eps(x)|),
        \qquad x\in B_R .
\end{equation}
Gronwall's lemma gives
\begin{equation}\label{eq:flow-growth-local}
        \sup_{0<\eps\le1}\sup_{0\le t\le T}\sup_{x\in B_R}
        |X_t^\eps(x)|\le C_{T,R}.
\end{equation}
Consequently, for $0\le s<t\le T$ and $x\in B_R$,
\begin{equation}\label{eq:time-lip-flow}
        |X_t^\eps(x)-X_s^\eps(x)|
        \le \int_s^t|B_\tau^\eps(X_\tau^\eps(x))|\dd\tau
        \le C_{T,R}|t-s|.
\end{equation}
This is equi-Lipschitz continuity in time.

Next we prove equi-Lipschitz continuity in the initial point.  By Lemma
\ref{lem:regularized-derivatives}, the maximum estimate \eqref{eq:Linfty-eps}, and
mass conservation,
\begin{align}
\norm{DB_t^\eps}_{L^\infty(\R^d)}
&\le \norm{D^2\psi_{a,\eps}*\omega}_{L^\infty}
    +\norm{D^2\psi_{r,\eps}*\phi_t^\eps}_{L^\infty} \notag\\
&\le C\bigl(\norm{\omega}_{L^1}+\norm{\omega}_{L^\infty}
    +1+\norm{\phi_t^\eps}_{L^\infty}\bigr)
\le C_T,
\label{eq:uniform-lip-Beps}
\end{align}
uniformly in $t\in[0,T]$ and $\eps\in(0,1]$.  Therefore, if $x,y\in B_R$,
\begin{align}
\frac{\dd}{\dd t}|X_t^\eps(x)-X_t^\eps(y)|
&\le |B_t^\eps(X_t^\eps(x))-B_t^\eps(X_t^\eps(y))| \notag\\
&\le C_T|X_t^\eps(x)-X_t^\eps(y)|.
\end{align}
A second application of Gronwall's lemma yields
\begin{equation}\label{eq:space-lip-flow}
        |X_t^\eps(x)-X_t^\eps(y)|\le e^{C_Tt}|x-y|,
        \qquad x,y\in B_R,
        \quad 0\le t\le T .
\end{equation}
Equations \eqref{eq:time-lip-flow} and \eqref{eq:space-lip-flow} are the
announced equi-Lipschitz estimates on $[0,T]\times B_R$.

Arzela--Ascoli gives, after a diagonal extraction in $R$, locally uniform
convergence $X^\eps\to X$ on $[0,T]\times\R^d$.  The convergence of the forces
can be used in the integral equation because \eqref{eq:force-conv-local} is
local in $L^1_tL^\infty_x$ and the trajectories starting from $B_R$ remain in
$B_{C_{T,R}}$.  Indeed,
\begin{align*}
&\sup_{x\in B_R}\left|\int_0^t
\Bigl[B_\tau^\eps(X_\tau^\eps(x))
       -\bigl(\grad V-K_r*\phi_\tau\bigr)(X_\tau(x))\Bigr]\dd\tau\right|\\
&\qquad\le \int_0^T
\norm{B_\tau^\eps-(\grad V-K_r*\phi_\tau)}_{L^\infty(B_{C_{T,R}+1})}\dd\tau
 +C_T\int_0^T\sup_{x\in B_R}|X_\tau^\eps(x)-X_\tau(x)|\dd\tau,
\end{align*}
which tends to zero.  Passing to the limit in the integral form of
\eqref{eq:regularized-flow} gives
\begin{equation}\label{eq:limiting-characteristic}
        X_t(x)=x-\int_0^t\Bigl(\grad V(X_\tau(x))
        -K_r*\phi_\tau(X_\tau(x))\Bigr)\dd\tau
\end{equation}
for every $x$ in the full-measure set on which the diagonal convergence is
realized.

Finally, for every bounded continuous $f$,
\[
        \int f\dd (X_t^\eps)_{\#}\phi_{0,\eps}
        \to \int f\dd (X_t)_{\#}\phi_0.
\]
{
Indeed, let \(f\in C_b(\mathbb R^d)\),  we prove that
\[
 \int_{\mathbb R^d} f(X_t^\eps(x))\phi_{0,\eps}(x)\,dx
 \longrightarrow
 \int_{\mathbb R^d} f(X_t(x))\phi_0(x)\,dx .
\]
Fix \(R>1\), by the local uniform convergence of the flows, there is
\[
\eta_{\eps,R}:=\sup_{t\in[0,T]}\sup_{x\in B_R}
        |X_t^\eps(x)-X_t(x)|\longrightarrow 0 .
\]
Moreover, by the flow bound, \(X_t^\eps(B_R)\) and \(X_t(B_R)\) are contained
in a fixed ball \(B_{C_{T,R}}\).  Hence \(f\) is uniformly continuous on this
ball, and if \(\omega_{f,T,R}\) denotes its modulus of continuity there, then
\[
\sup_{t\in[0,T]}\sup_{x\in B_R}
   |f(X_t^\eps(x))-f(X_t(x))|
 \le \omega_{f,T,R}(\eta_{\eps,R})\longrightarrow 0 .
\]
Therefore
\[
\begin{aligned}
&\sup_{t\in[0,T]}
\left|
 \int_{\mathbb R^d} f(X_t^\eps(x))\phi_{0,\eps}(x)\,dx
 -
 \int_{\mathbb R^d} f(X_t(x))\phi_0(x)\,dx
\right|                                                   \\
&\le
 \omega_{f,T,R}(\eta_{\eps,R})\int_{B_R}\phi_{0,\eps}(x)\,dx
 + \|f\|_{L^\infty}\|\phi_{0,\eps}-\phi_0\|_{L^1(B_R)}       \\
&\quad
 + \|f\|_{L^\infty}\int_{B_R^c}\phi_{0,\eps}(x)\,dx
 + \|f\|_{L^\infty}\int_{B_R^c}\phi_0(x)\,dx .
\end{aligned}
\]
The first two terms vanish as \(\eps\downarrow0\), for fixed \(R\).  For the
tail terms, using \(\langle x\rangle^p\ge R^p\) on \(B_R^c\), we obtain
\[
 \int_{B_R^c}\phi_{0,\eps}(x)\,dx
 +
 \int_{B_R^c}\phi_0(x)\,dx
 \le
 R^{-p}\left(
 \sup_{0<\eps\le1}\int_{\mathbb R^d}\langle x\rangle^p\phi_{0,\eps}(x)\,dx
 +
 \int_{\mathbb R^d}\langle x\rangle^p\phi_0(x)\,dx
 \right).
\]
The right-hand side tends to zero as \(R\to\infty\), by
\eqref{eq:initial-moment-approx}.  Hence
\[
 \int_{\mathbb R^d} f\,d(X_t^\eps)_{\#}\phi_{0,\eps}
 \longrightarrow
 \int_{\mathbb R^d} f\,d(X_t)_{\#}\phi_0 .
\]
Since \(\phi_t^\eps=(X_t^\eps)_{\#}\phi_{0,\eps}\), and since the preceding
argument shows that
\[
        (X_t^\eps)_{\#}\phi_{0,\eps}
        \rightharpoonup (X_t)_{\#}\phi_0
\]
narrowly, while also \(\phi_t^\eps\rightharpoonup \phi_t\) narrowly, uniqueness
of the narrow limit gives
\begin{equation}\label{eq:lagrangian-limit}
        \phi_t=(X_t)_{\#}\phi_0 .
\end{equation}
}

\smallskip
\noindent\emph{Step 6: uniqueness.}
Let $\phi_t=(X_t)_{\#}\phi_0$ and
$\widetilde\phi_t=(Y_t)_{\#}\phi_0$ be two Lagrangian solutions satisfying the
same $L^\infty$ and moment bounds.  We give the complete localized estimate.
The growth estimate \eqref{eq:force-growth} with $\varepsilon=0$ and Gronwall's lemma imply
\begin{equation}\label{eq:flow-growth-unique}
        |X_t(x)|+|Y_t(x)|\le C_T(1+|x|)
        \qquad\text{for }0\le t\le T
\end{equation}
for $\phi_0$-a.e. $x$.  For $R>0$ define
\begin{equation}\label{eq:DR-def}
        D_R(t):=\operatorname*{ess\,sup}_{|x|\le R}
        |X_t(x)-Y_t(x)|
\end{equation}
and
\begin{equation}\label{eq:tail-unique}
        \Theta_R:=\int_{|z|>R}(1+|z|^r)\phi_0(z)\dd z
        +R^r\phi_0(\{|z|>R\}).
\end{equation}
Since the initial $p$-moment is finite and $p>1\ge r$, $\Theta_R\to0$.

Fix $R$ and $x\in B_R$.  Subtracting the two characteristic equations gives
\begin{align}
|X_t(x)-Y_t(x)|
&\le \int_0^t |\grad V(X_s(x))-\grad V(Y_s(x))|\dd s \notag\\
&\quad +\int_0^t \int_{\R^d}
\bigl|K_r(X_s(x)-X_s(z))-K_r(Y_s(x)-Y_s(z))\bigr|
\phi_0(z)\dd z\dd s .
\label{eq:uniq-start}
\end{align}
The first term is bounded by
\begin{equation}\label{eq:uniq-V}
        |\grad V(X_s(x))-\grad V(Y_s(x))|
        \le \norm{D^2V}_{L^\infty}|X_s(x)-Y_s(x)|
        \le CD_R(s),
\end{equation}
where constant $C$ comes from Lemma \ref{lem:regularized-derivatives} with $\varepsilon=0,f=\omega$.
For the interaction term split the $z$-integral into $B_R$ and $B_R^c$.
On $B_R$, if $D_R(s)\le1$, then
\[
        |(X_s(x)-X_s(z))-(Y_s(x)-Y_s(z))|\le2D_R(s).
\]
Changing variables $y=X_s(z)$ and using $\phi_s=(X_s)_\#\phi_0$, Lemma
\ref{lem:averaged-lip} gives
\begin{align}
&\int_{B_R}\bigl|K_r(X_s(x)-X_s(z))-K_r(Y_s(x)-Y_s(z))\bigr|
\phi_0(z)\dd z \notag\\
&\qquad\le
\int_{\R^d}\sup_{|h|\le2D_R(s)}
        |K_r(X_s(x)-y+h)-K_r(X_s(x)-y)|\phi_s(y)\dd y
\le C_TD_R(s).
\label{eq:uniq-inside}
\end{align}
On $B_R^c$, the growth bound $|K_r(w)|\le C(1+|w|^r)$ and
\eqref{eq:flow-growth-unique} imply, for $x\in B_R$,
\begin{align}
&\int_{B_R^c}\bigl|K_r(X_s(x)-X_s(z))-K_r(Y_s(x)-Y_s(z))\bigr|
\phi_0(z)\dd z \notag\\
&\qquad\le C_T\int_{|z|>R}(1+R^r+|z|^r)\phi_0(z)\dd z
\le C_T\Theta_R .
\label{eq:uniq-tail}
\end{align}
Combining \eqref{eq:uniq-start}--\eqref{eq:uniq-tail} and taking the essential
supremum over $x\in B_R$ yields, as long as $D_R\le1$,
\begin{equation}\label{eq:localized-gronwall}
        D_R(t)\le C_T\int_0^tD_R(s)\dd s+C_T\Theta_R.
\end{equation}
By a standard continuity argument, if $R$ is large enough so that
$C_Te^{C_TT}\Theta_R<1$, then the assumption $D_R\le1$ closes on $[0,T]$.
Gronwall's lemma gives
\begin{equation}\label{eq:DR-tail-bound}
        D_R(t)\le C_Te^{C_TT}\Theta_R,
        \qquad 0\le t\le T.
\end{equation}
Letting $R\to\infty$ gives $X_t(x)=Y_t(x)$ for $\phi_0$-a.e. $x$ and every
$t\in[0,T]$.  Hence $\phi_t=\widetilde\phi_t$, proving uniqueness.

\end{proof}

\begin{proposition}[Propagation of $W^{n,\infty}$ regularity]\label{prop:Sobolev}
Assume that one of the following conditions holds:
\begin{enumerate}[label=(\roman*)]
    \item $n<d-1$ and $a,r\in[0,1]$;
    \item $n=d-1$ and $a,r\in(0,1]$.
\end{enumerate}
In addition, suppose that
\begin{equation}\label{eq:Wn-ass}
    \phi_0\in W^{n,\infty}(\R^d)\cap L^1(\R^d),
    \qquad
    \omega\in L^1(\R^d)\cap L^\infty(\R^d),
\end{equation}
then every smooth solution of the regularized equation \eqref{eq:regularized-pde}
satisfies, for $0\le t\le T$,
\begin{equation}\label{eq:Wninf-bound}
        \sup_{0<\eps\le1}\norm{\phi_t^\eps}_{W^{n,\infty}}
        \le C_T\Bigl(1+\norm{\phi_0}_{W^{n,\infty}}\Bigr),
\end{equation}
where $C_T$ depends on $d,n,a,r,T$, on
$\norm{\omega}_{L^1\cap L^\infty}$, and on
$\norm{\phi_0}_{L^1\cap L^\infty}$, but not on $\eps$.  Consequently, the
solution of Theorem \ref{thm:wellposed} belongs to
$L^\infty(0,T;W^{n,\infty})$ and satisfies the same estimate.
\end{proposition}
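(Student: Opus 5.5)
The plan is to differentiate the regularized equation \eqref{eq:regularized-pde} up to order $n$. The resulting linear system for the derivatives of $\phi^\eps_t$ is then closed along the smooth characteristics $X^\eps_t$. All coefficients are controlled by Lemma \ref{lem:regularized-derivatives}, applied only with $f=\omega$ or $f=\phi^\eps_t$, so that only the $L^1\cap L^\infty$ bounds of Theorem \ref{thm:wellposed} enter.

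\emph{Step 1 (coefficient bounds).} Write the equation in nonconservative form,
\[
\partial_t\phi^\eps_t-B^\eps_t\cdot\grad\phi^\eps_t=\phi^\eps_t\,\diver B^\eps_t .
\]
For a multi-index $\alpha$ with $|\alpha|=k\le n$, apply $D^\alpha$ to get
\[
\partial_t D^\alpha\phi^\eps_t-B^\eps_t\cdot\grad D^\alpha\phi^\eps_t=\sum c_{\beta\gamma}\,D^\beta B^\eps_t\, D^\gamma\phi^\eps_t ,
\]
where every term on the right has $|\beta|\ge1$ and $|\gamma|\le k$. These terms come from the commutator with the transport part, and from the Leibniz expansion of $D^\alpha(\phi\,\diver B)$. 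The highest derivative of $B^\eps$ that appears is $D^{k+1}B^\eps=D^{k+2}\psi_{a,\eps}*\omega-D^{k+2}\psi_{r,\eps}*\phi^\eps_t$. By \eqref{eq:convolution-Dm-eps} with $m=j+1\le n+2$, we obtain
\[
\sup_{\eps}\norm{D^jB^\eps_t}_{L^\infty}\le C\bigl(\norm{\omega}_{L^1\cap L^\infty}+1+\norm{\phi^\eps_t}_{L^\infty}\bigr)\le C_T,\qquad 1\le j\le n+1,
\]
where the last inequality uses \eqref{eq:Linfty-eps}. Here is where the hypotheses come in. The lemma requires $m=n+2<d+1+s$ for $s\in\{a,r\}$, i.e.\ $n<d-1+s$. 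This holds for all $s\in[0,1]$ when $n<d-1$, and for $n=d-1$ exactly when $a,r>0$; this matches cases (i) and (ii). The lower orders $m\ge2$ are covered as well, including the endpoint $(1,0,2)$. Crucially, the undifferentiated field $B^\eps$, which is unbounded, never appears as a coefficient: it only enters through the transport operator.

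\emph{Step 2 (Gronwall along characteristics).} Since $X^\eps_t$ is a smooth diffeomorphism of $\R^d$ with $\dot X^\eps_t=-B^\eps_t(X^\eps_t)$, the left-hand side above is the derivative of $D^\alpha\phi^\eps_t$ along $X^\eps_t$. Set $G_k(t):=\max_{|\alpha|\le k}\norm{D^\alpha\phi^\eps_t}_{L^\infty}$. Evaluating at $X^\eps_t(x)$ and taking the supremum over $x$, which is legitimate because $X^\eps_t$ is onto, gives
\[
G_k(t)\le G_k(0)+C_T\int_0^tG_k(\tau)\,\dd\tau .
\]
Gronwall then gives $G_n(t)\le e^{C_TT}G_n(0)$. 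This can be run by induction on $k$, or directly on $G_n$ since the system is linear.

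\emph{Step 3 (initial data).} It remains to check that $\norm{\phi_{0,\eps}}_{W^{n,\infty}}\le C(1+\norm{\phi_0}_{W^{n,\infty}})$. Mollification does not increase the $W^{n,\infty}$ norm. Truncation should be done with a smooth cutoff $\chi(\cdot/R)$ rather than a sharp one, since a sharp cutoff would destroy the derivative bounds. Renormalization multiplies by a constant close to $1$. Together these give the claim and prove \eqref{eq:Wninf-bound}.

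\emph{Step 4 (passage to the limit).} The bound \eqref{eq:Wninf-bound} is uniform in $\eps$. Moreover $D^\alpha\phi^\eps\overset{*}{\rightharpoonup}D^\alpha\phi$ in $L^\infty((0,T)\times B_R)$ along the subsequence of \eqref{eq:weakstar-local}. Weak-$*$ lower semicontinuity then places the limit of Theorem \ref{thm:wellposed} in $L^\infty(0,T;W^{n,\infty})$ with the same estimate. By uniqueness, this holds for the solution itself, not just for a subsequential limit.

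The main obstacle is Step 1 at top order. One must make sure that in the Leibniz expansion all $n+1$ derivatives of $B^\eps$ can be placed on the kernels, and that the resulting order $n+2$ stays within the range of Lemma \ref{lem:regularized-derivatives}. Moving derivatives onto $\phi^\eps$ instead would create a quadratic, nonclosing dependence on $\norm{\phi^\eps_t}_{W^{n,\infty}}$. Keeping them on the kernels is exactly what forces $n<d-1+s$.
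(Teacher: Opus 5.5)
Your proposal is correct and follows the paper's proof. You differentiate the equation, expand by Leibniz so that only $D^jB^\eps_t$ with $1\le j\le n+1$ appear as coefficients, bound these uniformly in $\eps$ by Lemma~\ref{lem:regularized-derivatives} with $m\le n+2<d+1+s$ (exactly where (i)/(ii) enter), and close with Gronwall. The differences are minor refinements: where the paper argues at a maximum point of $|D^\alpha\phi_t^\eps|$, you integrate along the characteristics $X_t^\eps$, which avoids assuming the supremum is attained; and you explicitly check $\norm{\phi_{0,\eps}}_{W^{n,\infty}}\le C(1+\norm{\phi_0}_{W^{n,\infty}})$ via a smooth cutoff, a point the paper leaves implicit.
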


\begin{proof}
Write
\[
        B_t^\eps:=\grad V_\eps-K_{r,\eps}*\phi_t^\eps,
        \qquad
        \partial_t\phi_t^\eps=B_t^\eps\cdot\grad\phi_t^\eps
        +\phi_t^\eps\diver B_t^\eps.
\]
The case $n=0$ is the maximum estimate \eqref{eq:Linfty-eps}.  Let
$1\le m\le n$, let $\alpha$ be a multi-index with $|\alpha|=m$, and apply
$D^\alpha$ to the equation.  Leibniz' rule gives
\begin{equation}\label{eq:Dalpha-eq}
\begin{aligned}
\partial_tD^\alpha\phi_t^\eps
&=B_t^\eps\cdot\grad D^\alpha\phi_t^\eps
  +D^\alpha\phi_t^\eps\,\diver B_t^\eps \\
&\quad+
\sum_{0<\beta\le\alpha}c_{\alpha\beta}
D^\beta B_t^\eps\cdot\grad D^{\alpha-\beta}\phi_t^\eps
+
\sum_{\beta<\alpha}d_{\alpha\beta}
D^{\alpha-\beta}\phi_t^\eps\,D^\beta\diver B_t^\eps.
\end{aligned}
\end{equation}
At a point where $|D^\alpha\phi_t^\eps|$ attains its maximum, the transport
term has no positive contribution in the maximum-principle argument.  Hence
\begin{equation}\label{eq:Mm-ineq}
\frac{\dd}{\dd t}\norm{D^\alpha\phi_t^\eps}_{L^\infty}
\le
C\norm{D^\alpha\phi_t^\eps}_{L^\infty}
\norm{\diver B_t^\eps}_{L^\infty}
+C\sum_{j=0}^{m-1}M_j(t)\,A_{m,j}(t),
\end{equation}
where
\[
        M_j(t):=\max_{|\gamma|\le j}\norm{D^\gamma\phi_t^\eps}_{L^\infty},
\]
and $A_{m,j}(t)$ is a finite sum of norms of derivatives of $B_t^\eps$ of
order at most $m+1-j$ and derivatives of $\diver B_t^\eps$ of order at most
$m-j$.
Thus the largest derivative of $V_\eps$ or $\psi_{r,\eps}*\phi_t^\eps$ that
appears is of order $m+2\le n+2$.  For $\ell\le n+2$, Lemma
\ref{lem:regularized-derivatives} gives
\begin{equation}\label{eq:kernel-ell-bound}
        \norm{D^\ell V_\eps}_{L^\infty}
        +\norm{D^\ell(\psi_{r,\eps}*\phi_t^\eps)}_{L^\infty}
        \le C_T
\end{equation}
provided $\ell<d+1+a$ and $\ell<d+1+r$.  The two alternatives in the statement
are exactly what is needed for this condition for all $\ell\le n+2$; when
$d=n+1$, the strict inequality requires $a,r>0$, while when $d>n+1$ it also
allows $a=r=0$.  Combining \eqref{eq:Mm-ineq} and \eqref{eq:kernel-ell-bound}
and summing over $m\le n$ yields
\[
        \frac{\dd}{\dd t}\norm{\phi_t^\eps}_{W^{n,\infty}}
        \le C_T\bigl(1+\norm{\phi_t^\eps}_{W^{n,\infty}}\bigr).
\]
Gronwall's lemma proves \eqref{eq:Wninf-bound}.  Passing to the limit
$\eps\downarrow0$ follows by weak-* compactness in
$L^\infty(0,T;W^{n,\infty})$.
\end{proof}

\section{Uniform compactness of the support} \label{sec:confinement}

The well-posedness theorem does not require compact support.  Compact support is
considered only in this section, where the aim is different: we prove that, in the
attractive regimes, compactly supported initial data generate solutions whose
supports remain contained in a fixed ball for all times.  This uniform support
bound will be used in Section~\ref{sec:longtime} to study the large-time
convergence.  The proof has two main ingredients.  First, an energy sublevel
estimate ensures that the support always contains at least one point in a fixed
ball.  Second, once the diameter of the support is sufficiently large, it cannot
increase further.
For later reference write
\begin{equation}\label{eq:energy-functional}
        \calE(\mu):=\iint_{\R^d\times\R^d}\psi_a(x-y)\omega(y)\dd y\dd\mu(x)
        -\frac12\iint_{\R^d\times\R^d}\psi_r(x-y)\dd\mu(x)\dd\mu(y).
\end{equation}
Along smooth solutions, we have $$\frac{\dd}{\dd t}\calE(\phi_t)
=-\int|K_a*\omega-K_r*\phi_t|^2\dd\phi_t\le0,$$ see also Proposition \ref{prop:energy-diss}. The identity extends to the
Lagrangian solutions by regularization.

\begin{proposition}[Moment anchoring from the energy sublevel]\label{prop:moment-anchor}
Assume that $\phi_t$ is a solution with $\calE(\phi_t)\le\calE(\phi_0)$ for all
$t\ge0$ and that $\omega$ has finite $(1+a)$-moment.  If either
\begin{enumerate}[label=(\roman*)]
\item $a>r$ and $\omega(\R^d)>0$, or
\item $a=r$ and $\omega(\R^d)>1$,
\end{enumerate}
then there are constants $C_0,R_0<\infty$, depending only on the energy
sublevel, $a,r$, and $\omega$, such that
\begin{equation}\label{eq:uniform-moment-anchor}
        \sup_{t\ge0}\int_{\R^d}|x|^{1+a}\dd\phi_t(x)\le C_0,
        \qquad
        \supp\phi_t\cap B_{R_0}(0)\ne\emptyset
        \quad\text{for every }t\ge0 .
\end{equation}
\end{proposition}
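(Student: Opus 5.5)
The strategy is to bound the energy from below by a coercive quantity in the $(1+a)$-moment $m(t):=\int|x|^{1+a}\dd\phi_t$. Combined with $\calE(\phi_t)\le\calE(\phi_0)$, this gives the uniform moment bound. The support anchoring then follows by a Chebyshev-type argument. Write $M:=\omega(\R^d)$ and $m_\omega:=\int|y|^{1+a}\omega(y)\dd y<\infty$.

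For the attractive term, apply the elementary inequality $|x|^{1+a}\le 2^a(|x-y|^{1+a}+|y|^{1+a})$. This gives
\[
\psi_a*\omega(x)\ge 2^{-a}M|x|^{1+a}-m_\omega ,
\]
so the attractive part of $\calE(\phi_t)$ is at least $2^{-a}M\,m(t)-m_\omega$. This lower bound, however, is too lossy for case (ii), because the constant $2^{-a}$ destroys the comparison with the repulsion. I would therefore use the sharper form $|x-y|^{1+a}\ge(1-\delta)|x|^{1+a}-C_\delta|y|^{1+a}$, valid for every $\delta>0$ (a standard consequence of convexity of $t\mapsto t^{1+a}$ on rays). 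It yields
\[
\int\psi_a*\omega\,\dd\phi_t\ge(1-\delta)M\,m(t)-C_\delta M^{0}m_\omega .
\]
For the repulsive term, the same convexity inequality used from above gives $|x-y|^{1+r}\le 2^{r}(|x|^{1+r}+|y|^{1+r})$. Using symmetry and probability mass, one gets $\frac12\iint\psi_r\,\dd\phi_t\dd\phi_t\le 2^r\int|x|^{1+r}\dd\phi_t$.

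In case (i), $a>r$, Young's inequality gives $|x|^{1+r}\le\eta|x|^{1+a}+C_\eta$ for any $\eta>0$. Choosing $\eta$ small, the energy satisfies $\calE(\phi_t)\ge c\,m(t)-C$ with $c>0$. Together with $\calE(\phi_t)\le\calE(\phi_0)$, this gives $m(t)\le C_0$.

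Case (ii), $a=r$, is the main obstacle. The crude factor $2^r$ must be replaced by something sharp enough to be beaten by $M>1$. I would use a centering trick. Let $b(t)$ be a bounded point associated with $\phi_t$ (for instance a median or barycenter). Then $|x-y|^{1+a}\le(1+\delta)|x-b|^{1+a}+C_\delta|y-b|^{1+a}$, and the double integral is at most $(1+\delta)\cdot\frac12\cdot 2\int|x-b|^{1+a}\dd\phi_t$ up to lower order. I would first try to prove the exact inequality
\[
\tfrac12\iint|x-y|^{1+a}\dd\phi\dd\phi\le\inf_b\int|x-b|^{1+a}\dd\phi\cdot 2^{a}\cdot\tfrac12\cdot 2,
\]
which is not obviously sufficient. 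Alternatively, I would work directly with the MMD structure. The quantity $\calE(\phi)+\frac12\iint\psi_a\,\dd\omega\dd\omega/M$ relates to the negative-definite form $-\frac12\iint\psi_a\,\dd(\phi-\omega/M)^{\otimes2}\ge0$ (conditional negative definiteness of $|x|^{1+a}$, $1+a<2$, for zero-mass signed measures). Expanding this form gives
\[
\tfrac12\iint\psi_a\,\dd\phi\dd\phi\le\frac1M\int\psi_a*\omega\,\dd\phi-\frac{1}{2M^2}\iint\psi_a\,\omega\omega .
\]
Hence $\calE(\phi)\ge(1-\frac1M)\int\psi_a*\omega\,\dd\phi-C_\omega$. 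Combined with the lower bound on $\psi_a*\omega$ from the attractive step, this gives $\calE(\phi_t)\ge(1-\frac1M)2^{-a}M\,m(t)-C$, which is coercive precisely when $M>1$. This is the route I would commit to.

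For the anchoring, once $m(t)\le C_0$, Chebyshev gives $\phi_t(\{|x|\ge R_0\})\le C_0R_0^{-(1+a)}<1$ for $R_0$ large. Hence $\phi_t(B_{R_0})>0$, and so $\supp\phi_t\cap B_{R_0}(0)\ne\emptyset$ for all $t\ge0$. All constants depend only on $\calE(\phi_0)$, $a$, $r$, $M$ and $m_\omega$.
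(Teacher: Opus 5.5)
Your committed argument is correct and is essentially the paper's proof. Case (i) uses the same lower bound $|x-y|^{1+a}\ge 2^{-a}|x|^{1+a}-|y|^{1+a}$ with Young absorption of the $(1+r)$-moment; in case (ii), applying conditional negative definiteness of $|x|^{1+a}$ to $\phi-\omega/M$ to get $\calE(\phi)\ge(1-\frac1M)\int\psi_a*\omega\,\dd\phi-C$ is exactly the paper's splitting of $\calE$ into $(M-1)\iint|x-y|^{1+a}\,\dd(\omega/M)\,\dd\phi$ plus a bracket bounded below, and the Chebyshev anchoring matches too, so the exploratory detours (the $(1-\delta)$ refinement and the centering trick) can simply be dropped.
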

\begin{remark}\label{rmk:eql}
In the equal-exponent case \(a=r\in[0,1]\), if \(\omega\in\mathcal P_2(\R^d)\),
Fornasier et al.~\cite{fornasier2016consistency} proved that, for every
\(p\in\bigl(0,(1+a)/2\bigr)\), there exists a constant \(C_0<\infty\) such that
\[
        \sup_{t\ge0}\int_{\R^d}|x|^p\,\dd\phi_t(x)\le C_0 .
\]
\end{remark}
\begin{proof}
Write $q_a=1+a$ and $q_r=1+r$.  We use repeatedly the elementary inequalities
valid for $q\ge1$:
\begin{equation}\label{eq:q-triangle-lower}
        |x-y|^q\ge 2^{1-q}|x|^q-|y|^q,
        \qquad
        |x-y|^q\le 2^{q-1}(|x|^q+|y|^q).
\end{equation}
First assume $a>r$.  Since $\omega(\R^d)>0$, the attractive part satisfies
\begin{align}
\int (\psi_a*\omega)(x)\dd\mu(x)
&=\iint |x-y|^{q_a}\omega(y)\dd y\dd\mu(x) \notag\\
&\ge 2^{1-q_a}\omega(\R^d)\int |x|^{q_a}\dd\mu(x)
       -\int |y|^{q_a}\omega(y)\dd y .
\label{eq:anchor-attraction-lower}
\end{align}
The repulsive part satisfies
\begin{equation}\label{eq:anchor-repulsion-upper}
        \frac12\iint |x-z|^{q_r}\dd\mu(x)\dd\mu(z)
        \le C\int(1+|x|^{q_r})\dd\mu(x).
\end{equation}
Combining \eqref{eq:anchor-attraction-lower}--\eqref{eq:anchor-repulsion-upper}
gives
\begin{equation}\label{eq:anchor-a-r-preabsorb}
        \calE(\mu)
        \ge c\int |x|^{q_a}\dd\mu(x)
        -C\int |x|^{q_r}\dd\mu(x)-C .
\end{equation}
Because $q_r<q_a$, Young's inequality gives
$|x|^{q_r}\le \eta |x|^{q_a}+C_\eta$.  Choosing $\eta$ small enough in
\eqref{eq:anchor-a-r-preabsorb} yields
\begin{equation}\label{eq:anchor-a-r-final}
        \calE(\mu)\ge c\int |x|^{q_a}\dd\mu(x)-C .
\end{equation}
Therefore every energy sublevel has a uniformly bounded $q_a$-moment.

Now assume $a=r$ and put $q=1+a$.  Let $M=\omega(\R^d)>1$ and
$\bar\omega=M^{-1}\omega$.  Decompose
\begin{align}
\calE(\mu)
&=(M-1)\iint |x-y|^q\dd\bar\omega(y)\dd\mu(x) \notag\\
&\quad +\left[\iint |x-y|^q\dd\bar\omega(y)\dd\mu(x)
        -\frac12\iint |x-z|^q\dd\mu(x)\dd\mu(z)\right].
\label{eq:equal-power-decomp-detailed}
\end{align}
The bracket is bounded from below.  Indeed, adding and subtracting
$\frac12\iint |y-y'|^q\dd\bar\omega(y)\dd\bar\omega(y')$ gives
\begin{align}
&\iint |x-y|^q\dd\bar\omega(y)\dd\mu(x)
        -\frac12\iint |x-z|^q\dd\mu(x)\dd\mu(z) \notag\\
&\qquad=\frac12\iint |y-y'|^q\dd\bar\omega(y)\dd\bar\omega(y')
        -\frac12\iint |u-v|^q\dd(\mu-\bar\omega)(u)\dd(\mu-\bar\omega)(v).
\label{eq:equal-power-cnd}
\end{align}
For $1\le q\leq2$, the kernel $|x|^q$ is conditionally negative definite, see Lemma \ref{lem:power-cnd} in the Appendix~\ref{sec:apx}, hence
the second term on the right-hand side of \eqref{eq:equal-power-cnd} is
nonnegative. Thus the bracket in \eqref{eq:equal-power-decomp-detailed} is
bounded below by a constant depending only on $\bar\omega$.  The first term in
\eqref{eq:equal-power-decomp-detailed} controls the $q$-moment by
\eqref{eq:q-triangle-lower}:
\begin{equation}\label{eq:equal-power-moment-control}
        \iint |x-y|^q\dd\bar\omega(y)\dd\mu(x)
        \ge 2^{1-q}\int |x|^q\dd\mu(x)
          -\int |y|^q\dd\bar\omega(y).
\end{equation}
Since $M-1>0$, \eqref{eq:equal-power-decomp-detailed} and
\eqref{eq:equal-power-moment-control} give the uniform $q$-moment bound.

Applying the preceding estimates to $\mu=\phi_t$ and using
$\calE(\phi_t)\le\calE(\phi_0)$ gives the first part of
\eqref{eq:uniform-moment-anchor}.  The anchoring is then immediate: if
$\supp\phi_t\cap B_R(0)=\emptyset$, then
$\int |x|^{1+a}\dd\phi_t(x)\ge R^{1+a}$.  Choosing
$R_0=(2C_0)^{1/(1+a)}$ forces at least one point of $\supp\phi_t$ to lie in
$B_{R_0}(0)$ for every $t\ge0$.
\end{proof}

\begin{lemma}[Two elementary diameter estimates]\label{lem:diameter-geometry}
Let \(0<s\le1\), and set
\[
        K_s(x)=\nabla |x|^{1+s}
        =
        (1+s)|x|^{s-1}x .
\]
For \(X_1,X_2,y\in\R^d\), define
\begin{equation}\label{eq:Gs-def}
        G_s(X_1,X_2;y)
        :=
        \ip{K_s(X_1-y)-K_s(X_2-y)}{X_1-X_2}.
\end{equation}
Let
\[
        D:=|X_1-X_2|.
\]
Then:

\begin{enumerate}[label=(\alph*)]
\item There exists \(C_s<\infty\) such that
\begin{equation}\label{eq:G-upper-global}
        0\le G_s(X_1,X_2;y)\le C_sD^{1+s}
\end{equation}
for all \(X_1,X_2,y\in\R^d\).

\item Suppose that there exists \(Z\in\R^d\) such that
\[
        |X_1-Z|\le D,\qquad |X_2-Z|\le D,\qquad |Z|\le R_0 .
\]
Then, for every \(A<\infty\), there exist constants
\(D_0=D_0(A,R_0,s)\) and \(c_s=c_s(A,R_0,s)>0\) such that, if
\(D\ge D_0\), then
\begin{equation}\label{eq:G-lower-background-ball}
        G_s(X_1,X_2;y)\ge c_sD^{1+s},
        \qquad |y|\le A .
\end{equation}
\end{enumerate}
\end{lemma}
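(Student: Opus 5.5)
Set $u:=X_1-y$ and $v:=X_2-y$, so that $u-v=X_1-X_2$ and $G_s=\ip{K_s(u)-K_s(v)}{u-v}$. Both parts then reduce to monotonicity and Hölder-type properties of the single map $K_s=\grad\psi_s$, with $\psi_s(x)=|x|^{1+s}$ convex.

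For part (a), nonnegativity follows from convexity of $\psi_s$: the gradient of a convex $C^1$ function is a monotone operator. One can also get this from Lemma \ref{lem:eps-formulas} by letting $\eps\downarrow0$ in $\ip{K_{s,\eps}(u)-K_{s,\eps}(v)}{u-v}\ge0$. For the upper bound, I use that $K_s$ is globally $s$-Hölder: $|K_s(u)-K_s(v)|\le C_s|u-v|^s$ for $0<s\le1$. This is the standard estimate for $x\mapsto|x|^{s-1}x$. To prove it, assume $|u|\ge|v|$ and split into two cases.
\begin{itemize}
\item If $|u-v|\ge|u|/2$, bound $|K_s(u)|+|K_s(v)|\le 2(1+s)|u|^s\le C|u-v|^s$.
\item Otherwise the segment from $v$ to $u$ stays at distance at least $|u|/2$ from the origin. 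Since $|DK_s(x)|\le C|x|^{s-1}$, the mean value theorem gives $C|u|^{s-1}|u-v|\le C|u-v|^s$, using $|u-v|<|u|$ and $s-1\le0$.
\end{itemize}
Cauchy--Schwarz then gives $G_s\le C_sD^{1+s}$.

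For part (b), the key point is that when $D$ is large compared with $A$ and $R_0$, the point $y$ is essentially at $Z$, while $u$ and $v$ lie in a ball of radius $D$ around $Z-y$ with $|u-v|=D$. By homogeneity, $K_s(\lambda x)=\lambda^sK_s(x)$, so I rescale by $D$. Setting $\hat u=u/D$ and $\hat v=v/D$ gives $G_s=D^{1+s}\ip{K_s(\hat u)-K_s(\hat v)}{\hat u-\hat v}$. The normalized points satisfy
\[
|\hat u-\hat v|=1,\qquad |\hat u-\hat z|\le1,\qquad |\hat v-\hat z|\le1,\qquad \hat z:=(Z-y)/D,\quad |\hat z|\le (R_0+A)/D .
\]
So it suffices to show that $g(\hat u,\hat v):=\ip{K_s(\hat u)-K_s(\hat v)}{\hat u-\hat v}$ is bounded below by a positive constant on the compact set
\[
S_\delta=\bigl\{|\hat u-\hat v|=1,\ |\hat u|\le 1+\delta,\ |\hat v|\le1+\delta\bigr\},
\]
for some small fixed $\delta$. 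Then $D_0:=(R_0+A)/\delta$ works.

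The remaining step is to check that $g$ is continuous and strictly positive on $S_\delta$. Continuity holds because $K_s$ is continuous for $s>0$. Strict positivity follows from strict monotonicity of $K_s$: if $g=0$ with $\hat u\neq\hat v$, then strict convexity of $|x|^{1+s}$ for $s>0$ is contradicted. To see this, note that $\ip{K_s(u)-K_s(v)}{u-v}=\int_0^1 (u-v)^TD^2\psi_s(v+\theta(u-v))(u-v)\dd\theta$. The Hessian is positive definite away from the origin, since the eigenvalues computed in Lemma \ref{lem:eps-formulas} with $\eps=0$ are $(1+s)|x|^{s-1}$ and $s(1+s)|x|^{s-1}$. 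The segment meets the origin at most at one point, so the integral is positive.

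Compactness then yields $\min_{S_\delta}g=:c_s>0$, where $c_s$ depends only on $s$ once $\delta=1$, say. I expect the main obstacle to be getting the dependence of the constants on $A$ and $R_0$ right. With the rescaling above these parameters enter only through $D_0$, and $c_s$ can even be taken independent of them.
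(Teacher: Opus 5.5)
Your proposal is correct and follows essentially the paper's argument. For (a) you use convexity, the global $s$-H\"older bound and Cauchy--Schwarz; for (b) you rescale by $D$ via homogeneity, confine the rescaled configuration to a compact set, and use strict convexity of $|x|^{1+s}$ to get a positive minimum. The differences are cosmetic: you bound the rescaled endpoints by $|\hat u|,|\hat v|\le 1+\delta$ directly instead of using the midpoint and the parallelogram identity to get $|q|\le\rho<1$, and you supply the case-split proof of the H\"older estimate that the paper only asserts.
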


\begin{proof}
The nonnegativity in \eqref{eq:G-upper-global} follows from convexity of
\(|x|^{1+s}\):
\[
        \ip{K_s(u)-K_s(v)}{u-v}\ge0.
\]

For the upper bound, if \(0<s<1\), then \(K_s\) is globally \(s\)-Hölder:
\begin{equation}\label{eq:Ks-holder}
        |K_s(u)-K_s(v)|\le C_s|u-v|^s .
\end{equation}
Thus
\[
        G_s(X_1,X_2;y)
        \le
        |K_s(X_1-y)-K_s(X_2-y)|\,D
        \le C_sD^{1+s}.
\]
If \(s=1\), then \(K_1(x)=2x\), and hence
\[
        G_1(X_1,X_2;y)=2|X_1-X_2|^2=2D^2.
\]
This proves \eqref{eq:G-upper-global}.

Now prove the lower bound.  Let
\[
        e:=\frac{X_1-X_2}{D},
        \qquad
        m:=\frac{X_1+X_2}{2}.
\]
Then
\[
        X_1=m+\frac D2e,
        \qquad
        X_2=m-\frac D2e.
\]
The assumptions \(|X_i-Z|\le D\) imply, by the parallelogram identity,
\[
        |m-Z|^2+\frac{D^2}{4}
        =
        \frac{|X_1-Z|^2+|X_2-Z|^2}{2}
        \le D^2.
\]
Therefore
\[
        |m-Z|\le \frac{\sqrt3}{2}D.
\]
If \(|y|\le A\), then
\[
        \left|\frac{m-y}{D}\right|
        \le
        \frac{|m-Z|}{D}+\frac{|Z|+|y|}{D}
        \le
        \frac{\sqrt3}{2}+\frac{R_0+A}{D}.
\]
Choose
\[
        \rho\in\left(\frac{\sqrt3}{2},1\right),
\]
and then \(D_0\) so large that
\[
        \frac{\sqrt3}{2}+\frac{R_0+A}{D_0}\le \rho.
\]
Set
\[
        q:=\frac{m-y}{D}.
\]
By homogeneity,
\begin{equation}\label{eq:G-scaled}
        D^{-(1+s)}G_s(X_1,X_2;y)
        =
        \ip{K_s(q+e/2)-K_s(q-e/2)}{e}.
\end{equation}
Define
\[
        H_s(q,e):=
        \ip{K_s(q+e/2)-K_s(q-e/2)}{e}.
\]
On the compact set
\[
        \mathcal K_\rho:=\{(q,e): |q|\le\rho,\ |e|=1\},
\]
we claim that \(H_s(q,e)>0\).  Indeed, for fixed \(q,e\), let
\[
        f(\tau):=|q+\tau e|^{1+s},
        \qquad -\frac12\le\tau\le\frac12.
\]
Since \(1+s>1\), the function \(z\mapsto |z|^{1+s}\) is strictly convex.
Because \(|q|<1\), the segment
\[
        \{q+\tau e:-1/2\le\tau\le1/2\}
\]
is not contained in a line segment on which \(|\cdot|^{1+s}\) is affine.  Hence
\(f\) is strictly convex, and therefore
\[
        f'(1/2)-f'(-1/2)>0.
\]
But
\[
        f'(1/2)-f'(-1/2)
        =
        \ip{K_s(q+e/2)-K_s(q-e/2)}{e}
        =
        H_s(q,e).
\]
Thus \(H_s>0\) on \(\mathcal K_\rho\).  By compactness,
\[
        c_s:=\min_{\mathcal K_\rho}H_s>0.
\]
Combining this with \eqref{eq:G-scaled} gives
\[
        G_s(X_1,X_2;y)\ge c_sD^{1+s}
\]
for all \(D\ge D_0\) and all \(|y|\le A\).
\end{proof}

\begin{remark}
The restriction \(s>0\) in Lemma~\ref{lem:diameter-geometry} is essential.
For \(s=0\), the lower bound \eqref{eq:G-lower-background-ball} is false.  For
example, in one dimension, take
\[
        X_1=D,\qquad X_2=0,\qquad y=-1.
\]
Then
\[
        K_0(X_1-y)-K_0(X_2-y)
        =
        \sgn(D+1)-\sgn(1)
        =
        0,
\]
and hence
\[
        G_0(X_1,X_2;y)=0.
\]
Thus no positive lower bound of order \(D\) can hold for \(s=0\).  The endpoint
\(a=r=0\) is treated separately in Theorem~\ref{thm:support}.
\end{remark}

\begin{lemma}[Near constancy of \(G_s\) on fixed balls]\label{lem:G-near-constant}
Let \(0<s\le1\), and assume the hypotheses of
Lemma~\ref{lem:diameter-geometry}.  For every \(A<\infty\),
\begin{equation}\label{eq:G-near-constant}
        \sup_{|y|\le A}
        \left|
        \frac{G_s(X_1,X_2;y)-G_s(X_1,X_2;0)}{D^{1+s}}
        \right|
        \longrightarrow 0
        \qquad\text{as }D\to\infty .
\end{equation}
\end{lemma}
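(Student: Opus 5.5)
We reuse the rescaling from the proof of Lemma~\ref{lem:diameter-geometry}. With $e=(X_1-X_2)/D$, $m=(X_1+X_2)/2$ and $q_y:=(m-y)/D$, homogeneity of degree $s$ of $K_s$ gives
\[
        D^{-(1+s)}G_s(X_1,X_2;y)=H_s(q_y,e),
        \qquad
        H_s(q,e)=\ip{K_s(q+e/2)-K_s(q-e/2)}{e}.
\]
The left-hand side of \eqref{eq:G-near-constant} is therefore $\sup_{|y|\le A}|H_s(q_y,e)-H_s(q_0,e)|$. Moreover $|q_y-q_0|=|y|/D\le A/D$, and this tends to zero as $D\to\infty$.

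It then suffices to know that $H_s$ is uniformly continuous in $q$, uniformly in $e$, on a bounded set containing all the points $q_y$. The hypotheses $|X_i-Z|\le D$, $|Z|\le R_0$ together with the parallelogram bound from Lemma~\ref{lem:diameter-geometry} give $|q_y|\le \frac{\sqrt3}{2}+\frac{R_0+A}{D}\le 2$ once $D\ge R_0+A$. So everything lives in the compact set $\{|q|\le2,\ |e|=1\}$.

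For the continuity we use the global Hölder bound \eqref{eq:Ks-holder} when $0<s<1$, and linearity of $K_1$ when $s=1$. Since $|e|=1$,
\[
        |H_s(q,e)-H_s(q',e)|
        \le |K_s(q+e/2)-K_s(q'+e/2)|+|K_s(q-e/2)-K_s(q'-e/2)|
        \le 2C_s|q-q'|^s .
\]
This holds globally, so the compactness step is not even needed. Applying it with $q=q_y$, $q'=q_0$ gives the explicit rate
\[
        \sup_{|y|\le A}\left|\frac{G_s(X_1,X_2;y)-G_s(X_1,X_2;0)}{D^{1+s}}\right|\le 2C_s(A/D)^s,
\]
which tends to zero as $D\to\infty$. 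For $s=1$ the difference is in fact identically zero, since $G_1=2D^2$ does not depend on $y$.

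The only point needing care is the global Hölder continuity \eqref{eq:Ks-holder} of $K_s$ for $0<s<1$, which the previous lemma already uses. If it were not available, one would instead observe that $H_s$ is continuous on the compact set $\{|q|\le2,|e|=1\}$, hence uniformly continuous there, and obtain the same conclusion without a rate.
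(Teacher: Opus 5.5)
Your proof is correct and is essentially the paper's argument. The paper applies the global H\"older bound \eqref{eq:Ks-holder} directly in the original variables to get $|G_s(X_1,X_2;y)-G_s(X_1,X_2;0)|\le C_sD|y|^s$, which after division by $D^{1+s}$ is your rate $2C_s(A/D)^s$ obtained in rescaled variables; the case $s=1$ is handled identically, and, as you note, the compactness step and the anchoring hypothesis of Lemma~\ref{lem:diameter-geometry} are not needed.
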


\begin{proof}
If \(0<s<1\), then by \eqref{eq:Ks-holder},
\[
\begin{aligned}
|G_s(X_1,X_2;y)-G_s(X_1,X_2;0)|
&\le
D|K_s(X_1-y)-K_s(X_1)|  \\
&\quad+
D|K_s(X_2-y)-K_s(X_2)|  \\
&\le C_sD|y|^s .
\end{aligned}
\]
Thus, for \(|y|\le A\),
\[
        |G_s(X_1,X_2;y)-G_s(X_1,X_2;0)|
        \le C_{A,s}D.
\]
Dividing by \(D^{1+s}\) gives \(C_{A,s}D^{-s}\to0\).

If \(s=1\), then \(K_1(x)=2x\), so \(G_1(X_1,X_2;y)\) is independent of \(y\).
Hence the left-hand side in \eqref{eq:G-near-constant} is identically zero.
\end{proof}

\begin{theorem}[Uniform compactness of the support]\label{thm:support}
Assume the hypotheses of Theorem~\ref{thm:wellposed}.  Assume additionally that
\(\phi_0\) is compactly supported and that \(\omega\) has finite
\((1+a)\)-moment.  If either
\[
        a>r,\qquad \omega(\R^d)>0,
\]
or
\[
        a=r,\qquad \omega(\R^d)>1,
\]
then there exists \(R_*<\infty\) such that
\begin{equation}\label{eq:uniform-support}
        \supp\phi_t\subset B_{R_*}(0),
        \qquad t\ge0.
\end{equation}
\end{theorem}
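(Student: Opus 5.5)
We follow the two-ingredient strategy announced at the start of Section~\ref{sec:confinement}, working along characteristics. By Proposition~\ref{prop:moment-anchor}, applied with the energy dissipation inequality $\calE(\phi_t)\le\calE(\phi_0)$, there is a fixed $R_0$ such that $\supp\phi_t\cap B_{R_0}(0)\neq\emptyset$ for all $t\ge0$. Together with the uniform $(1+a)$-moment bound $C_0$, this anchors the cloud near the origin.

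It therefore suffices to show that the diameter
\[
D(t):=\sup\{|X_t(x)-X_t(x')|:\ x,x'\in\supp\phi_0\}
\]
stays bounded. Indeed, every point of $\supp\phi_t$ then lies within $D(t)$ of the anchoring point $Z_t\in B_{R_0}$, so one may take $R_*=R_0+\sup_t D(t)$. To control $D$, we argue first for the regularized flows of Theorem~\ref{thm:wellposed} (or directly for the Lagrangian flow, whose velocity field is locally Lipschitz) and then pass to the limit.

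For two characteristics $X_1=X_t(x)$ and $X_2=X_t(x')$ realizing (nearly) the diameter, we compute
\[
\tfrac12\tfrac{\dd}{\dd t}|X_1-X_2|^2=-\int G_a(X_1,X_2;y)\,\omega(y)\dd y+\int G_r(X_1,X_2;y)\,\dd\phi_t(y),
\]
with $G_s$ as in \eqref{eq:Gs-def}. The repulsive term is bounded by $C_rD^{1+r}$ using Lemma~\ref{lem:diameter-geometry}(a).

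For the attractive term, we truncate $\omega$ to a ball $B_A$ containing most of its mass, say $\omega(B_A)\ge\omega(\R^d)-\eta$. The hypothesis of Lemma~\ref{lem:diameter-geometry}(b) holds with $Z=Z_t$: since all points of $\supp\phi_t$ are within $D$ of each other, in particular $|X_i-Z_t|\le D$. Hence, for $D\ge D_0$, the attraction is at least $c_a\,\omega(B_A)D^{1+a}$, using nonnegativity of $G_a$ outside $B_A$.

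When $a>r$ this gives
\[
\tfrac{\dd}{\dd t}D^2\le -c\,D^{1+a}+C\,D^{1+r}<0 \quad\text{once } D\ge D_1,
\]
so by a maximal-pair (Danskin-type) argument $D(t)\le\max(D(0),D_1)$ for all $t$.

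\textbf{Equal exponents} $a=r=s>0$. Here the crude bounds are not enough, because the constants must be compared. We use Lemma~\ref{lem:G-near-constant} instead: on $B_A$ we have $G_s(X_1,X_2;y)=G_s(X_1,X_2;0)+o(D^{1+s})$. For the repulsion we need a similar statement, but $\phi_t$ is not supported in a fixed ball. We split $\phi_t$ into its part in $B_A$ (handled by Lemma~\ref{lem:G-near-constant}) and its tail. Since $G_s\le C_sD^{1+s}$, the tail contributes at most $C_sD^{1+s}\phi_t(B_A^c)\le C_sD^{1+s}C_0A^{-(1+s)}$, which is small uniformly in $t$ by the moment bound. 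The same estimate controls the tail of $\omega$. This yields
\[
\tfrac12\tfrac{\dd}{\dd t}D^2\le -\bigl(\omega(\R^d)-1\bigr)G_s(X_1,X_2;0)+\bigl(\epsilon(A)+o_D(1)\bigr)D^{1+s}.
\]
Lemma~\ref{lem:diameter-geometry}(b) with $y=0$ gives $G_s(X_1,X_2;0)\ge c_sD^{1+s}$. Choosing $A$ large and then $D$ large, the mass excess $\omega(\R^d)-1>0$ wins and the derivative is negative.

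\textbf{The case $s=0$.} This case (in particular $a=r=0$, and also $r=0<a$ handled as above since $G_0\ge0$ is bounded by $2D$) must be treated separately, as the Remark after Lemma~\ref{lem:diameter-geometry} shows. For $a=r=0$ we would instead track the maximal projection $\max_{x}\langle X_t(x),e\rangle$ in each direction $e$. The velocity there is $-K_0*\omega+K_0*\phi_t$, whose component along $e$ is close to $-\omega(\R^d)+1<0$ when the point is far out and most of the mass of $\omega$ and $\phi_t$ lies in $B_A$. This prevents the support from leaving a fixed ball.

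\textbf{Main obstacle.} We expect the hardest step to be the equal-exponent case: making the comparison with $\omega(\R^d)-1$ quantitatively sharp uniformly in time, which requires the tail control of $\phi_t$ via Proposition~\ref{prop:moment-anchor}. A secondary technical point is justifying the differentiation of the supremum defining $D(t)$, for which we would argue with the regularized flows and a Danskin-type lemma.
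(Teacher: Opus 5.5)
You handle $a>r$ and $a=r=s\in(0,1]$ the same way the paper does: anchoring via Proposition~\ref{prop:moment-anchor}, the Dini derivative of $D(t)^2$ at a diameter pair, and Lemma~\ref{lem:diameter-geometry}(b) for the attraction. For equal exponents you add Lemma~\ref{lem:G-near-constant} and Markov control of $\phi_t(B_A^c)$ and $\omega(B_A^c)$, choosing $A$ before $D$. Those parts are fine.

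The gap is the endpoint $a=r=0$, which is part of the statement and which you only sketch. Tracking $h_e(t)=\max_x\langle X_t(x),e\rangle$ for a fixed direction $e$ does not close, because a maximizer $x^*$ of $\langle\cdot,e\rangle$ on $\supp\phi_t$ need not point in direction $e$. Suppose $x^*$ is far out at angle $\theta$ to $e$.
\begin{itemize}
\item For $y\in B_A$ one has $e\cdot K_0(x^*-y)\approx\cos\theta$, so $e\cdot K_0*\omega(x^*)\approx\omega(\R^d)\cos\theta$, not $\omega(\R^d)$.
\item The repulsive term $e\cdot K_0*\phi_t(x^*)\ge0$ picks up nearly the full mass of any part of $\phi_t$ lying close to $x^*$ on the $-e$ side. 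For example, in $d=2$ take $e=(1,0)$, $x^*=(h,T)$ with $T\gg h$, and mass $\delta$ at $(h-1,T)$; that mass contributes exactly $\delta$.
\item The $e$-component of the velocity at $x^*$ is then roughly $-(\omega(\R^d)-1)\cos\theta+\delta$. This is positive when $\cos\theta\ll\delta$.
\end{itemize}
The moment bound makes $\delta$ small, but not small compared with $\cos\theta$. So your claim that the component along $e$ is close to $1-\omega(\R^d)$ holds only when $x^*/|x^*|\approx e$, and monotonicity of each $h_e$ fails. Such configurations are excluded only once the radius is already controlled, which is circular.

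The fix, which is what the paper does, is to track the radius $R(t)=\max_{x\in\supp\phi_0}|X_t(x)|$. Equivalently, use $\max_e h_e(t)$, whose only active directions are those with the maximizer exactly at $R(t)e$. At an active point take the radial direction $e=X_t(x)/|X_t(x)|$. For $|x|\ge R_A:=2A/\eta$ and $|y|\le A$ one has $e\cdot K_0(x-y)\ge(|x|-A)/(|x|+A)\ge1-\eta$. Choosing $A$ and $\eta$ using $\omega(\R^d)>1$ then gives
\[
e\cdot K_0*\omega(x)\ge(1-\eta)\,\omega(B_A)-\bigl(\omega(\R^d)-\omega(B_A)\bigr)>1\ge e\cdot K_0*\phi_t(x).
\]
The maximum rule of Remark~\ref{rmk:dini} gives $D^+R(t)\le0$ whenever $R(t)\ge R_A$, hence $R(t)\le\max\{R(0),R_A\}$. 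This case needs neither the anchoring nor any tail control of $\phi_t$; the trivial bound $e\cdot K_0*\phi_t\le1$ suffices.
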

\begin{remark}
For \(a\ge r>1\), this result is no longer valid.  Indeed, Example~\ref{ex:particle-escape-mass-larger-than-one} in Appendix~\ref{sec:apx} shows that
particles may escape to infinity in this regime.  The reason is that the global
estimate \eqref{eq:G-upper-global}, which is essential in the proof above,
fails when \(s>1\).
\end{remark}

\begin{proof}
We prove the claim in three cases.

\smallskip
\noindent
\emph{Case 1: \(a=r=0\).}
Let
\[
        M:=\omega(\R^d)>1.
\]
Choose \(A<\infty\) and \(0<\eta<1\) such that
\[
        m_A:=\omega(B_A(0))
\]
satisfies
\begin{equation}\label{eq:fail0}
    (1-\eta)m_A-(M-m_A)>1.  
\end{equation}

This is possible because \(m_A\uparrow M>1\).  Let
\[
        R_A:=\frac{2A}{\eta}.
\]
If \(|x|\ge R_A\) and \(e=x/|x|\), then for every \(y\in B_A(0)\),
\[
        e\cdot K_0(x-y)
        =
        \frac{|x|-e\cdot y}{|x-y|}
        \ge
        \frac{|x|-A}{|x|+A}
        \ge 1-\eta.
\]
For \(y\notin B_A(0)\), we use \(e\cdot K_0(x-y)\ge -1\).  Hence
\[
        e\cdot(K_0*\omega)(x)
        \ge
        (1-\eta)m_A-(M-m_A)
        >1.
\]
On the other hand, since \(\phi_t\) is a probability measure,
\[
        e\cdot(K_0*\phi_t)(x)\le1.
\]
Therefore, for \(|x|\ge R_A\),
\[
        e\cdot\bigl[-K_0*\omega(x)+K_0*\phi_t(x)\bigr]<0.
\]
Thus the radial velocity points strictly inward outside \(B_{R_A}(0)\). 
{Let
\[
        R(t):=\max_{x\in\operatorname{supp}\phi_t}|x|.
\]
The characteristic velocity is
\[
        \dot X_t(x)=-K_0*\omega(X_t(x))+K_0*\phi_t(X_t(x)).
\]
From the preceding estimate, if \(|x|\ge R_A\) and \(e=x/|x|\), then
\[
        e\cdot\bigl(-K_0*\omega(x)+K_0*\phi_t(x)\bigr)<0.
\]
Thus every characteristic located outside \(B_{R_A}(0)\) has inward radial
velocity.

Since
\[
        R(t)=\max_{x\in\operatorname{supp}\phi_0}|X_t(x)|,
\]
the standard maximum rule for upper Dini derivatives gives
\[
        D^+R(t)
        \le
        \max_{\substack{x\in\operatorname{supp}\phi_0\\ |X_t(x)|=R(t)}}
        \frac{X_t(x)}{|X_t(x)|}\cdot \dot X_t(x),
\]
see Remark \ref{rmk:dini} in the Appendix.
If \(R(t)\ge R_A\), then every active point \(X_t(x)\) in the maximum satisfies
\(|X_t(x)|\ge R_A\). Hence the radial velocity estimate implies
\[
        D^+R(t)\le 0,
        \qquad\text{whenever }R(t)\ge R_A.
\]
Consequently,
\[
        R(t)\le \max\{R(0),R_A\},
        \qquad t\ge0.
\]
}
This proves \eqref{eq:uniform-support} in the endpoint \(a=r=0\).

\smallskip
\noindent
\emph{Case 2: \(a>r\).}
By Proposition~\ref{prop:moment-anchor}, there exists \(R_0<\infty\) such that
\[
        \supp\phi_t\cap B_{R_0}(0)\ne\emptyset
        \qquad\text{for every }t\ge0.
\]
{We next derive the differential inequality for the diameter.  Let
\[
        D(t):=\operatorname{diam}(\operatorname{supp}\phi_t).
\]
Since \(\phi_t=(X_t)_{\#}\phi_0\), we may write
\[
        D(t)^2
        =
        \max_{x,z\in\operatorname{supp}\phi_0}
        |X_t(x)-X_t(z)|^2.
\]
The standard maximum rule for upper Dini derivatives yields
\[
        D^+D(t)^2
        \le
        \max_{\substack{x,z\in\operatorname{supp}\phi_0\\
        |X_t(x)-X_t(z)|=D(t)}}
        2\left\langle
        \dot X_t(x)-\dot X_t(z),
        X_t(x)-X_t(z)
        \right\rangle,
\]
see Remark \ref{rmk:dini} in the Appendix.
Choose a diameter pair and denote
\[
        X_1:=X_t(x),\qquad X_2:=X_t(z),
        \qquad |X_1-X_2|=D(t).
\]
Along characteristics,
\[
        \dot X_i=-K_a*\omega(X_i)+K_r*\phi_t(X_i),
        \qquad i=1,2.
\]
Therefore
\[
\begin{aligned}
\dot X_1-\dot X_2
&=
-\bigl(K_a*\omega(X_1)-K_a*\omega(X_2)\bigr) 
+\bigl(K_r*\phi_t(X_1)-K_r*\phi_t(X_2)\bigr).
\end{aligned}
\]
Expanding the convolutions, we get
\[
K_a*\omega(X_1)-K_a*\omega(X_2)
=
\int_{\mathbb R^d}
\bigl(K_a(X_1-y)-K_a(X_2-y)\bigr)\omega(y)\,dy,
\]
and
\[
K_r*\phi_t(X_1)-K_r*\phi_t(X_2)
=
\int_{\mathbb R^d}
\bigl(K_r(X_1-y)-K_r(X_2-y)\bigr)\,d\phi_t(y).
\]
By the definition
\[
        G_s(X_1,X_2;y)
        :=
        \left\langle
        K_s(X_1-y)-K_s(X_2-y),
        X_1-X_2
        \right\rangle ,
\]
we obtain
\begin{equation}\label{eq:diam-dini-detailed}
\begin{aligned}
D^+D(t)^2
&\le
-2\int_{\mathbb R^d}G_a(X_1,X_2;y)\omega(y)\,dy +2\int_{\mathbb R^d}G_r(X_1,X_2;y)\,d\phi_t(y).
\end{aligned}
\end{equation}
}

Choose \(Z_t\in\supp\phi_t\cap B_{R_0}(0)\).  Since \(X_1,X_2\) are a diameter
pair,
\[
        |X_i-Z_t|\le D(t),\qquad i=1,2.
\]
Choose \(A<\infty\) such that
\[
        m_A:=\omega(B_A(0))>0.
\]
Since \(a>0\), Lemma~\ref{lem:diameter-geometry} gives, for all sufficiently large
\(D(t)\),
\[
        G_a(X_1,X_2;y)\ge c_aD(t)^{1+a},
        \qquad |y|\le A.
\]
Thus
\[
        \int_{\R^d}G_a(X_1,X_2;y)\omega(y)\dd y
        \ge c_am_AD(t)^{1+a}.
\]
For the repulsive term, if \(r>0\), use \eqref{eq:G-upper-global}; if \(r=0\),
use \(|K_0(u)-K_0(v)|\le2\).  In both cases,
\[
        \int_{\R^d}G_r(X_1,X_2;y)\dd\phi_t(y)
        \le C_rD(t)^{1+r}.
\]
Therefore
\begin{equation}\label{eq:diam-a-r-final}
        D^+D(t)^2
        \le
        -2c_am_AD(t)^{1+a}
        +
        2C_rD(t)^{1+r}.
\end{equation}
Since \(a>r\), the right-hand side is negative for all sufficiently large
\(D(t)\).  Hence \(D(t)\) cannot cross a fixed large value upward.  Combining
this with the anchoring point \(Z_t\in B_{R_0}(0)\) gives
\[
        \supp\phi_t\subset B_{R_*}(0)
\]
for some \(R_*<\infty\).

\smallskip
\noindent
\emph{Case 3: \(a=r=s\in(0,1]\).}
Let
\[
        M:=\omega(\R^d)>1.
\]
By Proposition~\ref{prop:moment-anchor}, there are \(C_0,R_0<\infty\) such that
\[
        \sup_{t\ge0}\int_{\R^d}|x|^{1+s}\dd\phi_t(x)\le C_0,
        \qquad
        \supp\phi_t\cap B_{R_0}(0)\ne\emptyset.
\]
Choose \(\eta>0\) small and then \(A<\infty\) large such that
\begin{equation}\label{eq:equal-tail-choices}
        \omega(B_A(0))\ge M-\eta,
        \qquad
        \sup_{t\ge0}\phi_t(B_A(0)^c)\le\eta.
\end{equation}
The second inequality follows from the uniform moment bound and Markov's
inequality.

Let \(X_1,X_2\) be a diameter pair, \(D=|X_1-X_2|\) and \(G_s(y):=G_s(X_1,X_2;y)\).  By
Lemma~\ref{lem:diameter-geometry}, applied at \(y=0\), there exists
\(c_s>0\) such that, for all sufficiently large \(D\),
\[
        G_s(0)\ge c_sD^{1+s}.
\]
Moreover, by Lemma~\ref{lem:G-near-constant}, for every \(\varepsilon>0\),
after increasing the threshold on \(D\) if necessary,
\[
        |G_s(y)-G_s(0)|\le \varepsilon D^{1+s},
        \qquad |y|\le A.
\]
Using also \(0\le G_s(y)\le C_sD^{1+s}\), we get
\[
\begin{aligned}
&\int G_s(y)\,d\omega-\int G_s(y)\,d\phi_t\\
&\qquad\qquad\ge
\int_{B_A}G_s(y)\,d\omega
-
\int_{B_A}G_s(y)\,d\phi_t
-
\int_{B_A^c}G_s(y)\,d\phi_t  \\
&\qquad\qquad=
G_s(0)\bigl(\omega(B_A)-\phi_t(B_A)\bigr)
+
\int_{B_A}(G_s(y)-G_s(0))\,d(\omega-\phi_t)
-
\int_{B_A^c}G_s(y)\,d\phi_t  \\
&\qquad\qquad\ge
c_sD^{1+s}\bigl(M-1-\eta\bigr)
-
\varepsilon D^{1+s}\bigl(M+1\bigr)
-
C_s\eta D^{1+s}.
\end{aligned}
\]
Choose first \(\eta>0\) and then \(\varepsilon>0\) small enough so that
\begin{equation}\label{eq:failo}
        c_s(M-1-\eta)-\varepsilon(M+1)-C_s\eta>0.
\end{equation}
This gives
\[
        \int G_s(y)\,d\omega-\int G_s(y)\,d\phi_t
        \ge cD^{1+s}
\]
for all sufficiently large \(D\) and some small $c>0$.  Inserting this into
\eqref{eq:diam-dini-detailed} gives
\[
        D^+D(t)^2\le -2cD(t)^{1+s}<0
\]
whenever \(D(t)\) is large.  Hence the diameter is uniformly bounded, and the
anchoring point in \(B_{R_0}(0)\) gives \eqref{eq:uniform-support}.
\end{proof}

{\begin{remark}
   The proof of Theorem~\ref{thm:support} does not cover the borderline case
\(a=r\) and \(\omega(\mathbb R^d)=1\).  In this case, the conditions used
in \eqref{eq:fail0} and \eqref{eq:failo} fail, so the argument no longer
yields a uniform compact-support bound.  This is expected: if \(\omega\) is not
compactly supported, then one should not expect \(\phi_t\) to remain uniformly
compactly supported, since the long-time behavior is expected to satisfy
\(\phi_t\to\omega\).
\end{remark}
}

\section{Characterization of stationary distributions} \label{sec:stationary}

This section is formulated for the nonquadratic attractive-dominant range
\[
        0\le r\le a<1,
\]
where the homogeneous kernels may be inverted in the sense of tempered
homogeneous distributions.  The basic facts on homogeneous distributions,
Hadamard finite parts, and Fourier transforms of homogeneous kernels are
standard; see Gelfand--Shilov~\cite[Chapters II--III]{GelfandShilov1964},
Grafakos~\cite[Section 2.4.3]{Grafakos2014}, and
Stein--Weiss~\cite[Chapter V]{SteinWeiss1971}.  

We use the Fourier transform
\[
        \widehat f(\xi)=\int_{\R^d}e^{-ix\cdot\xi}f(x)\dd x.
\]
All Fourier identities involving homogeneous kernels are understood in
\(\calS'(\R^d)\).

We first recall the finite-part convention used below.  For
\(\lambda\in\mathbb C\) with \(\operatorname{Re}\lambda>-d\), the function
\(|\xi|^\lambda\) is locally integrable and defines a tempered distribution by
\[
        \left\langle |\xi|^\lambda,\varphi\right\rangle
        :=
        \int_{\R^d}|\xi|^\lambda\varphi(\xi)\dd\xi,
        \qquad
        \varphi\in\calS(\R^d).
\]
The family
\[
        \lambda\mapsto |\xi|^\lambda
\]
extends meromorphically to \(\lambda\in\mathbb C\) as a family of homogeneous
tempered distributions.  Its poles occur at the exceptional degrees
\[
        \lambda=-d-2k,
        \qquad
        k=0,1,2,\dots .
\]
If
\[
        \lambda\notin\{-d,-d-2,-d-4,\dots\},
\]
we denote the value of this meromorphic continuation by
\[
        \operatorname{fp}|\xi|^\lambda,
\]
the homogeneous finite-part distribution of degree \(\lambda\).  This is the
standard finite-part normalization for homogeneous distributions; see
Gelfand--Shilov~\cite[Chapter III]{GelfandShilov1964} and
Grafakos~\cite[Section 2.4.3]{Grafakos2014}.

For \(s\in[0,1)\), the exponent
\[
        -d-1-s
\]
is not exceptional, since
\[
        -d-2<-d-1-s\le -d-1.
\]
Equivalently, for every \(\varphi\in\calS(\R^d)\), we set
\[
\left\langle
        \operatorname{fp}|\xi|^{-d-1-s},\varphi
\right\rangle
:=
\lim_{\varepsilon\downarrow0}
\left[
        \int_{|\xi|>\varepsilon}
        |\xi|^{-d-1-s}\varphi(\xi)\dd\xi
        -
        \frac{|\mathbb S^{d-1}|}{1+s}
        \varepsilon^{-1-s}\varphi(0)
\right].
\]
Although this formula looks different from
\cite[Equation~(2.4.8)]{Grafakos2014}, expanding \(\varphi\) at the origin and
using spherical symmetry shows that it agrees with the fixed-radius
Taylor-subtraction definition there, up to the normalization constants used in
that reference.  The displayed subtraction removes the only divergent
contribution near the origin.  The linear term in the Taylor expansion of
\(\varphi\) has zero spherical average, while the quadratic remainder is
integrable because \(s<1\).

For \(s\in[0,1)\), the homogeneous distribution
\[
        \psi_s(x)=|x|^{1+s}
\]
has Fourier transform
\begin{equation}\label{eq:FT-psi}
        \widehat{\psi_s}
        =
        \gamma_{d,s}\operatorname{fp}|\xi|^{-d-1-s}
        \qquad\text{in }\calS'(\R^d),
\end{equation}
where \(\gamma_{d,s}\ne0\) depends only on \(d,s\), and on the Fourier
normalization, see \cite[Theorem 2.4.6]{Grafakos2014}.

For \(\alpha>0\), the fractional Laplacian is defined by the homogeneous
Fourier multiplier
\[
        \widehat{(-\Delta)^\alpha f}(\xi)
        =
        |\xi|^{2\alpha}\widehat f(\xi),
\]
whenever the right-hand side defines a tempered distribution.  In particular,
for
\[
        \alpha=\frac{d+1+s}{2},
\]
one has
\[
        \widehat{(-\Delta)^{(d+1+s)/2}f}(\xi)
        =
        |\xi|^{d+1+s}\widehat f(\xi).
\]

We now define the inverse convolution operator.  It is not introduced as an
unrestricted Fourier multiplier on all of \(\calS'(\R^d)\).  Instead, it is
first defined on the range of convolution by \(\psi_s\).  For \(s\in[0,1)\), set
\[
        \mathfrak P_s
        :=
        \left\{
        f\in\calS'(\R^d):
        \text{ there exists a finite measure }\mu
        \text{ such that } f=\psi_s*\mu
        \text{ in }\calS'(\R^d)
        \right\}.
\]
This representing measure is unique.  Indeed, suppose
\[
        \psi_s*\mu_1=\psi_s*\mu_2
        \qquad\text{in }\calS'(\R^d),
\]
and set \(\nu:=\mu_1-\mu_2\).  Then
\[
        \psi_s*\nu=0.
\]
Taking Fourier transforms gives
\[
        \widehat{\psi_s}\,\widehat\nu=0.
\]
Using \eqref{eq:FT-psi}, this becomes
\[
        \gamma_{d,s}\operatorname{fp}|\xi|^{-d-1-s}\,\widehat\nu=0
        \qquad\text{in }\calS'(\R^d).
\]
On \(\R^d\setminus\{0\}\), the finite-part distribution
\(\operatorname{fp}|\xi|^{-d-1-s}\) coincides with the ordinary smooth
function \(|\xi|^{-d-1-s}\), which is strictly positive.  Hence
\[
        \widehat\nu(\xi)=0
        \qquad\text{for every }\xi\ne0.
\]
Since \(\nu\) is a finite signed measure,
\[
        \widehat\nu(\xi)=\int_{\R^d}e^{-ix\cdot\xi}\,d\nu(x)
\]
is continuous on \(\R^d\).  Therefore
\[
        \widehat\nu(0)
        =
        \lim_{\xi\to0,\ \xi\ne0}\widehat\nu(\xi)
        =
        0.
\]
Thus \(\widehat\nu\equiv0\) on \(\R^d\).

Finally, the Fourier transform is injective on finite signed measures.  Indeed,
for every \(\varphi\in\calS(\R^d)\), Fourier inversion and Fubini give
\[
\begin{aligned}
\int_{\R^d}\varphi(x)\,d\nu(x)
&=
(2\pi)^{-d}
\int_{\R^d}
\widehat\varphi(\xi)
\left(\int_{\R^d}e^{ix\cdot\xi}\,d\nu(x)\right)
d\xi  \\
&=
(2\pi)^{-d}
\int_{\R^d}
\widehat\varphi(\xi)\widehat\nu(-\xi)\,d\xi
=
0.
\end{aligned}
\]
Thus \(\nu\) annihilates \(\calS(\R^d)\), hence also
\(C_c^\infty(\R^d)\).  Therefore \(\nu=0\) as a finite signed measure, and the
representing measure is unique.

For \(f=\psi_s*\mu\in\mathfrak P_s\), define
\[
        \calL_s f:=\mu.
\]
Thus
\[
        \calL_s(\psi_s*\mu)=\mu,
        \qquad
        \calL_s\psi_s=\delta_0
        \qquad\text{in }\calS'(\R^d).
\]
{On this admissible class, \(\calL_s\) is defined through the Fourier multiplier
\begin{equation}\label{eq:L-symbol}
        \widehat{\calL_s f}(\xi)
        :=
        \gamma_{d,s}^{-1}|\xi|^{d+1+s}\widehat f(\xi).
\end{equation}
}
Equivalently, on \(\mathfrak P_s\),
\begin{equation}\label{eq:L-fraclap}
        \calL_s=c_{d,s}(-\Delta)^{(d+1+s)/2},
\end{equation}
with \(c_{d,s}\ne0\).

We shall also use \(\calL_r\) on potentials generated by \(\psi_a\), with
\[
        0\le r\le a<1.
\]
Let
\[
        \beta:=a-r\in[0,1).
\]
If \(\mu\) is a finite measure and \(\psi_a*\mu\) is a tempered distribution,
we define
\[
        \calL_r(\psi_a*\mu)
\]
by
\[
        \widehat{\calL_r(\psi_a*\mu)}(\xi)
        :=
        \frac{\gamma_{d,a}}{\gamma_{d,r}}
        |\xi|^{r-a}\widehat\mu(\xi)
        =
        \frac{\gamma_{d,a}}{\gamma_{d,r}}
        |\xi|^{-\beta}\widehat\mu(\xi).
\]
This is well-defined as a tempered distribution.  Indeed, since
\(\beta\in[0,1)\), the multiplier \(|\xi|^{-\beta}\) is locally integrable at
the origin, and it has no growth at infinity.  Hence
\[
        |\xi|^{-\beta}\widehat\mu(\xi)
\]
defines a tempered distribution because \(\widehat\mu\) is bounded and
continuous.

When \(a=r\), one has \(\beta=0\), and the definition reduces to
\[
        \calL_a(\psi_a*\mu)=\mu.
\]
When \(a>r\), the multiplier \(|\xi|^{-(a-r)}\) is the Fourier multiplier of the
Riesz potential of order \(a-r\).  Equivalently,
\[
        \calL_r(\psi_a*\mu)
        =
        C_{d,a,r}\,\mu*|x|^{-(d-a+r)}
\]
in \(\calS'(\R^d)\), with the convolution understood through the Fourier
definition.

\begin{proposition}[Fourier inversion of the stationary equation]
\label{prop:fourier-inversion}
Let \(0\le r\le a<1\).  Let \(\omega\) be a finite nonnegative measure and let
\(\phi\in\calP(\R^d)\).  Assume that
\[
        \psi_r*\phi\in\mathfrak P_r
\]
and that \(\psi_a*\omega\) is admissible for the action of \(\calL_r\) in the
sense described above.  Set
\begin{equation}\label{eq:F-def-stationary}
        F:=\psi_a*\omega-\psi_r*\phi .
\end{equation}
Define
\[
        \calL_rF
        :=
        \calL_r(\psi_a*\omega)-\calL_r(\psi_r*\phi).
\]
Then, in \(\calS'(\R^d)\),
\begin{equation}\label{eq:phi-inverse}
        \phi=Q_{a,r}-\calL_rF,
        \qquad
        Q_{a,r}:=\calL_r(\psi_a*\omega).
\end{equation}
Furthermore,
\begin{equation}\label{eq:Q-symbol}
        \widehat{Q_{a,r}}(\xi)
        =
        \frac{\gamma_{d,a}}{\gamma_{d,r}}
        |\xi|^{r-a}\widehat\omega(\xi).
\end{equation}
When \(a>r\), this is equivalently the Riesz-potential distribution
\begin{equation}\label{eq:Q-riesz}
        Q_{a,r}
        =
        C_{d,a,r}\,\omega*|x|^{-(d-a+r)} .
\end{equation}
When \(a=r\), one has
\[
        Q_{a,a}=\omega.
\]
\end{proposition}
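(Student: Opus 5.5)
The identity \eqref{eq:phi-inverse} is essentially a bookkeeping consequence of the definitions of \(\calL_r\) on \(\mathfrak P_r\) and on potentials generated by \(\psi_a\), so I will argue directly from them. Since \(\psi_r*\phi\in\mathfrak P_r\) with representing measure \(\phi\), and this representing measure is unique (as shown right after the definition of \(\mathfrak P_s\)), the definition of \(\calL_r\) gives \(\calL_r(\psi_r*\phi)=\phi\) in \(\calS'(\R^d)\). By hypothesis, \(\psi_a*\omega\) is admissible, so \(Q_{a,r}:=\calL_r(\psi_a*\omega)\) is the tempered distribution with Fourier transform \(\frac{\gamma_{d,a}}{\gamma_{d,r}}|\xi|^{r-a}\widehat\omega(\xi)\). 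This is well defined because \(\widehat\omega\) is bounded and continuous and \(|\xi|^{-(a-r)}\) is locally integrable with \(a-r<1\le d\). Subtracting, \(\calL_rF=Q_{a,r}-\phi\) holds by the very definition of \(\calL_rF\) as a difference. Rearranging gives \eqref{eq:phi-inverse}, and \eqref{eq:Q-symbol} is the defining symbol.

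I will also check that this definition is consistent with the multiplier \eqref{eq:L-symbol}. Formally, \(\widehat{\psi_a*\omega}=\gamma_{d,a}\operatorname{fp}|\xi|^{-d-1-a}\widehat\omega\), and multiplying by \(\gamma_{d,r}^{-1}|\xi|^{d+1+r}\) away from \(\xi=0\) yields \(\frac{\gamma_{d,a}}{\gamma_{d,r}}|\xi|^{r-a}\widehat\omega\). The same computation applied to \(\phi\) returns \(\widehat\phi\). So on \(\R^d\setminus\{0\}\) both constructions agree. The only ambiguity is a distribution supported at the origin, and the definitions fix it: the finite-measure representation for \(\psi_r*\phi\), and the locally integrable extension for the \(\psi_a\) term. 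I expect this step to be the main subtlety. One must not multiply \(\operatorname{fp}|\xi|^{-d-1-s}\) by \(|\xi|^{d+1+s}\) naively in \(\calS'\), since such products are not defined at the origin. I will stress that the proposition is a statement about the operator as defined, not about an unrestricted multiplier.

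For \eqref{eq:Q-riesz} with \(a>r\), set \(\beta=a-r\in(0,1)\). I will invoke the standard Riesz potential identity, in the same reference that gives \eqref{eq:FT-psi}: \(\widehat{|x|^{-(d-\beta)}}=c_{d,\beta}|\xi|^{-\beta}\) with \(c_{d,\beta}\neq0\), since \(0<\beta<d\). Then \(\omega*|x|^{-(d-\beta)}\) is a locally integrable function of at most polynomial decay, because \(\omega\) is finite and the kernel is locally integrable and bounded at infinity. Its Fourier transform is \(c_{d,\beta}|\xi|^{-\beta}\widehat\omega\), which can be justified by approximating \(\omega\) by Schwartz functions, or by Fubini against test functions. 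Matching this with \eqref{eq:Q-symbol} gives \(C_{d,a,r}=\gamma_{d,a}/(\gamma_{d,r}c_{d,\beta})\).

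For \(a=r\), the multiplier is identically \(1\), so \(\widehat{Q_{a,a}}=\widehat\omega\). Injectivity of the Fourier transform on finite measures, proved above, then gives \(Q_{a,a}=\omega\).
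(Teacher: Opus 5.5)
Your proposal is correct and follows the same route as the paper. Both unpack the definition of \(\calL_r\) on \(\mathfrak P_r\) to get \(\calL_r(\psi_r*\phi)=\phi\), subtract this from \(Q_{a,r}=\calL_r(\psi_a*\omega)\), read \eqref{eq:Q-symbol} off the defining multiplier, identify \(|\xi|^{-(a-r)}\) as the Riesz-potential symbol when \(a>r\), and use that the multiplier is \(1\) when \(a=r\). Your Fubini justification of the Fourier transform of \(\omega*|x|^{-(d-\beta)}\) and your explicit constant \(C_{d,a,r}\) go slightly beyond the paper's sketch; for \(a=r\), note that you need injectivity of the Fourier transform on \(\calS'(\R^d)\) rather than on finite measures, since \(Q_{a,a}\) is a priori only a tempered distribution.
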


\begin{proof}
Since \(\psi_r*\phi\in\mathfrak P_r\), the definition of \(\calL_r\) gives
\[
        \calL_r(\psi_r*\phi)=\phi.
\]
Therefore,
\[
        \calL_rF
        =
        \calL_r(\psi_a*\omega)-\calL_r(\psi_r*\phi)
        =
        Q_{a,r}-\phi.
\]
Rearranging gives \eqref{eq:phi-inverse}.

By the definition of the action of \(\calL_r\) on \(\psi_a*\omega\),
\[
        \widehat{Q_{a,r}}(\xi)
        =
        \frac{\gamma_{d,a}}{\gamma_{d,r}}
        |\xi|^{r-a}\widehat\omega(\xi),
\]
which proves \eqref{eq:Q-symbol}.  If \(a>r\), then
\[
        |\xi|^{r-a}=|\xi|^{-(a-r)}
\]
is the Fourier multiplier of the Riesz potential of order \(a-r\).  Its
physical-space kernel is
\[
        |x|^{(a-r)-d}=|x|^{-(d-a+r)}.
\]
This gives \eqref{eq:Q-riesz}.  If \(a=r\), then
\[
        |\xi|^{r-a}=1,
        \qquad
        \gamma_{d,a}/\gamma_{d,r}=1,
\]
so \(Q_{a,a}=\omega\).
\end{proof}

\begin{theorem}[Free-boundary characterization of zero-flux stationary states]
\label{thm:stationary-characterization}
Let \(0\le r\le a<1\), let \(\phi\in\calP(\R^d)\), and assume that
\[
        \psi_r*\phi\in\mathfrak P_r
\]
and that \(\psi_a*\omega\) is admissible for the action of \(\calL_r\).  Set
\[
        F:=\psi_a*\omega-\psi_r*\phi.
\]
Assume that \(F\) has a Borel gradient representative that is
\(\phi\)-integrable.  Let
\[
        \Omega:=\operatorname{int}\{x:\phi=0\text{ in a neighbourhood of }x\},
        \qquad
        S:=\R^d\setminus\Omega.
\]
Then \(\phi\) is a zero-flux stationary state if and only if
\begin{equation}\label{eq:gradF-on-support}
        \grad F=0
        \qquad \phi\text{-a.e. on }S,
\end{equation}
and
\begin{equation}\label{eq:stationary-inverse}
        \phi=Q_{a,r}-\calL_rF,
        \qquad
        Q_{a,r}=\calL_r(\psi_a*\omega).
\end{equation}
Equivalently, the pair \((\Omega,F)\), with
\[
        \phi:=Q_{a,r}-\calL_rF,
\]
satisfies
\begin{align}
        \calL_rF&=Q_{a,r} &&\text{in }\Omega,\label{eq:free1}\\
        \grad F&=0 &&\phi\text{-a.e. on }S,\label{eq:free2}\\
        Q_{a,r}-\calL_rF&\ge0 &&\text{as a measure supported in }S,\label{eq:free3}\\
        \int_{\R^d}(Q_{a,r}-\calL_rF)&=1.\label{eq:free4}
\end{align}
\end{theorem}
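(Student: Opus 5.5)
The plan is to separate the statement into two pieces of different nature. The first, \eqref{eq:stationary-inverse}, holds for every admissible \(\phi\), whether stationary or not. The second, \eqref{eq:gradF-on-support}, is exactly the zero-flux condition.

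\emph{The inversion identity.} Under the standing admissibility assumptions, Proposition~\ref{prop:fourier-inversion} already gives \(\phi=Q_{a,r}-\calL_rF\) in \(\calS'(\R^d)\). This holds regardless of stationarity. So \eqref{eq:stationary-inverse} may be listed as a condition in the ``if and only if'', but it is automatically true.

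\emph{Equivalence of zero flux with \eqref{eq:gradF-on-support}.} Since \(\grad F=K_a*\omega-K_r*\phi\) has a \(\phi\)-integrable Borel representative, the vector measure \(\phi\,\grad F\) is a finite \(\R^d\)-valued measure. The condition \eqref{eq:zeroflux-test} then says that this measure annihilates \(C_c^\infty(\R^d;\R^d)\). By density in \(C_0\) and the Riesz representation theorem, this is equivalent to \(\grad F=0\) \(\phi\)-a.e.

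Next I localize to \(S\). The set \(\Omega\) is open and \(\phi\) vanishes near each of its points, so \(\phi(\Omega)=0\) by a countable cover. Hence \(\phi\) is concentrated on \(S=\supp\phi\). Therefore ``\(\grad F=0\) \(\phi\)-a.e.'' and ``\(\grad F=0\) \(\phi\)-a.e. on \(S\)'' coincide, which gives both directions of the first equivalence.

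\emph{The free-boundary reformulation.} Assume \eqref{eq:gradF-on-support} and \eqref{eq:stationary-inverse}. Restricting the distributional identity \(Q_{a,r}-\calL_rF=\phi\) to test functions supported in \(\Omega\) gives \eqref{eq:free1}, because \(\phi\) vanishes on \(\Omega\). Condition \eqref{eq:free2} is \eqref{eq:gradF-on-support} itself. Conditions \eqref{eq:free3} and \eqref{eq:free4} hold because \(\phi\) is a probability measure supported in \(S\).

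Conversely, suppose \((\Omega,F)\) satisfies \eqref{eq:free1}--\eqref{eq:free4}, and \(\phi\) is defined as \(Q_{a,r}-\calL_rF\). Then \eqref{eq:free3} and \eqref{eq:free4} show that \(\phi\) is a probability measure supported in \(S\). Condition \eqref{eq:free1} is consistent with \(\phi|_\Omega=0\). Condition \eqref{eq:free2} gives zero flux by the previous step.

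\emph{Expected obstacle: consistency.} One must check that the \(F\) in the reformulation really equals \(\psi_a*\omega-\psi_r*\phi\). I would obtain this from the uniqueness of representing measures in \(\mathfrak P_r\) established before Proposition~\ref{prop:fourier-inversion}.

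Concretely, \(\calL_r(\psi_a*\omega-F)=\phi\). Suppose \(\psi_a*\omega-F\) lies in \(\mathfrak P_r\), as is implicit in the hypothesis that \(\psi_r*\phi\in\mathfrak P_r\) with \(F\) defined as in the statement. Then \(\psi_a*\omega-F=\psi_r*\phi\), modulo the kernel of the multiplier \(|\xi|^{d+1+r}\).

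That kernel consists of distributions whose Fourier transforms are supported at \(0\), i.e. polynomials. These must be excluded using the growth of \(F\) together with the admissibility class. This is the delicate point. I would handle it by working throughout within the admissible class where \(F\) is defined from \(\phi\), so that the ``equivalently'' is read with \(F:=\psi_a*\omega-\psi_r*\phi\) fixed, and the identity is just Proposition~\ref{prop:fourier-inversion}.
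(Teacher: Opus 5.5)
Your proposal is correct and follows essentially the same route as the paper: the inversion identity is automatic from \(\calL_r(\psi_r*\phi)=\phi\), zero flux is equivalent to \(\grad F=0\) \(\phi\)-a.e.\ because \(\phi\) is concentrated on \(S\), and the converse of the free-boundary reformulation is closed, as in the paper, by taking the gauge condition \(F=\psi_a*\omega-\psi_r*\phi\) as given. One small correction to your obstacle discussion: once \(\psi_a*\omega-F\in\mathfrak P_r\) is known, uniqueness of the representing measure already gives \(\psi_a*\omega-F=\psi_r*\phi\) exactly, so the polynomial ambiguity arises only if \(\calL_r\) is treated as an unrestricted multiplier on \(\calS'(\R^d)\).
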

{\begin{remark}
   Zero-flux stationary states need not be unique.  For example, let
\(d=1\), \(a=r=0\), and
\[
        \omega=\one_{[-1,1]} .
\]
Then both
\[
        \phi=\delta_0
        \qquad\text{and}\qquad
        \phi=\one_{[-1/2,1/2]}\,\dd x
\]
are zero-flux stationary probability measures.  Indeed, with
\(K_0=\operatorname{sgn}\), we have
\[
        K_0*\omega(0)=0=K_0*\delta_0(0),
\]
so \(\delta_0\) is stationary.  Moreover, for
\(x\in[-1/2,1/2]\),
\[
        K_0*\omega(x)=2x=K_0*\phi(x),
\]
and hence
\[
        \phi\,(K_0*\omega-K_0*\phi)=0 .
\]
Thus the zero-flux stationary state is not unique.
\end{remark}
\begin{remark}
   When \(\omega(\mathbb R^d)<1\), one cannot in general expect a solution of the
free-boundary problem.  Already in the case \(d=1\) and \(a=r=0\), the
interval free-boundary conditions~(see Proposition \ref{prop:1d-ar}) require \(V'(A)=-1\) and \(V'(B)=1\), while
\[
        |V'(x)|=|K_0*\omega(x)|\le \omega(\mathbb R)<1.
\]
Thus the interval free-boundary problem has no solution in this case.
\end{remark}
}
\begin{proof}
We first prove the characterization for a given probability measure \(\phi\).
Since
\[
        F=\psi_a*\omega-\psi_r*\phi,
\]
and since \(\psi_r*\phi\in\mathfrak P_r\), we have
\[
        \calL_r(\psi_r*\phi)=\phi .
\]
Therefore,
\[
        \calL_rF
        =
        \calL_r(\psi_a*\omega)-\calL_r(\psi_r*\phi)
        =
        Q_{a,r}-\phi .
\]
Rearranging gives
\[
        \phi=Q_{a,r}-\calL_rF,
\]
which proves \eqref{eq:stationary-inverse}.

Next, by definition,
\[
        F=\psi_a*\omega-\psi_r*\phi.
\]
Hence
\[
        \grad F
        =
        K_a*\omega-K_r*\phi
\]
in the sense of distributions, with the chosen \(\phi\)-integrable Borel
representative.  The zero-flux stationary condition is
\[
        \phi\bigl(K_a*\omega-K_r*\phi\bigr)=0
\]
in the sense of vector-valued distributions.  Using the identity above, this is
the same as
\[
        \phi\,\grad F=0.
\]
Since \(\grad F\) is \(\phi\)-integrable, the condition
\(\phi\,\grad F=0\) is equivalent to
\[
        \grad F=0
        \qquad \phi\text{-a.e.}
\]
Moreover, by the definition of \(\Omega\), the measure \(\phi\) vanishes on
\(\Omega\), and hence is supported in \(S=\R^d\setminus\Omega\).  Thus the last
condition may be written as
\[
        \grad F=0
        \qquad \phi\text{-a.e. on }S,
\]
which proves \eqref{eq:gradF-on-support}.

We now derive the free-boundary system.  Since \(\phi=0\) in \(\Omega\), the
inverse identity \eqref{eq:stationary-inverse} gives
\[
        Q_{a,r}-\calL_rF=0
        \qquad\text{in }\Omega.
\]
Equivalently,
\[
        \calL_rF=Q_{a,r}
        \qquad\text{in }\Omega,
\]
which is \eqref{eq:free1}.  The zero-flux condition is exactly
\eqref{eq:free2}.  Since \(\phi=Q_{a,r}-\calL_rF\) is a probability measure
supported in \(S\), we obtain \eqref{eq:free3} and \eqref{eq:free4}.

Conversely, suppose that \(\Omega\), \(F\), and
\[
        \phi:=Q_{a,r}-\calL_rF
\]
satisfy \eqref{eq:free1}--\eqref{eq:free4}.  By
\eqref{eq:free3}--\eqref{eq:free4}, the candidate \(\phi\) is a nonnegative
probability measure supported in \(S\).  The additional gauge condition gives
\[
        F=\psi_a*\omega-\psi_r*\phi
        \qquad\text{in }\calS'(\R^d).
\]
Therefore,
\[
        \grad F=K_a*\omega-K_r*\phi
\]
in the sense of distributions, with the chosen \(\phi\)-integrable Borel
representative.  By \eqref{eq:free2},
\[
        \grad F=0
        \qquad \phi\text{-a.e. on }S.
\]
Since \(\phi\) is supported in \(S\), it follows that
\[
        \phi\bigl(K_a*\omega-K_r*\phi\bigr)
        =
        \phi\,\grad F
        =
        0
\]
in the sense of vector-valued distributions.  Hence \(\phi\) is a zero-flux
stationary state.
\end{proof}

\begin{proposition}[Green-function representation under a homogeneous exterior Dirichlet ansatz]
\label{prop:green}
Let \(0\le r\le a<1\), and consider a distributional free-boundary ansatz
\((\Omega,F)\) satisfying the admissibility hypotheses of
Theorem~\ref{thm:stationary-characterization} whenever the candidate measure
\[
        \phi:=Q_{a,r}-\calL_rF
\]
is formed.  Let
\[
        \alpha_r:=\frac{d+1+r}{2}.
\]
Choose an integer \(p\ge0\) and \(s\in(0,1]\) such that
\begin{equation}\label{eq:alpha-ps}
        \alpha_r=p+s,
\end{equation}
with the convention that if \(\alpha_r\in\N\), we take \(s=1\) and
\(p=\alpha_r-1\).  Let
\begin{equation}\label{eq:H-def}
        H:=(-\Delta)^pF,
\end{equation}
assuming that this expression is well defined as a distribution.  Then the
equation \eqref{eq:free1} is equivalent, inside \(\Omega\), to
\[
        (-\Delta)^sH=c_{d,r}^{-1}Q_{a,r}
        \qquad\text{in }\Omega.
\]
The exterior condition
\[
        H=0\quad\text{in }\Omega^c
\]
does not follow from \eqref{eq:free1}; it is an additional homogeneous
Dirichlet ansatz.  Under this ansatz, the fractional Dirichlet problem is
\begin{equation}\label{eq:fractional-dirichlet}
        (-\Delta)^sH=K_\Omega:=c_{d,r}^{-1}Q_{a,r}
        \quad\text{in }\Omega,
        \qquad
        H=0\quad\text{in }\Omega^c.
\end{equation}
Assume that this Dirichlet problem is well posed in a class for which the Green
representation is valid.  If \(G_\Omega^s\) is the corresponding Green kernel,
then
\begin{equation}\label{eq:H-green}
        H(x)=\int_\Omega G_\Omega^s(x,y)K_\Omega(y)\dd y,
        \qquad x\in\Omega.
\end{equation}
For the zero extension of \(H\), the corresponding candidate stationary measure
is
\begin{equation}\label{eq:phi-green}
        \phi=Q_{a,r}-c_{d,r}(-\Delta)^sH.
\end{equation}
This candidate gives an actual zero-flux stationary state only after the
remaining free-boundary conditions \eqref{eq:free2}--\eqref{eq:free4}, the
admissibility assumptions in Theorem~\ref{thm:stationary-characterization}, and
the convolution gauge condition
\[
        F=\psi_a*\omega-\psi_r*\phi
\]
are verified.
\end{proposition}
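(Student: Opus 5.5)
The plan is to work entirely on the Fourier side, using the multiplier identity \eqref{eq:L-fraclap}, $\calL_r=c_{d,r}(-\Delta)^{\alpha_r}$ on the admissible class, together with the semigroup property of homogeneous multipliers, $|\xi|^{2\alpha_r}=|\xi|^{2s}|\xi|^{2p}$. Since $\alpha_r=(d+1+r)/2>0$, the decomposition \eqref{eq:alpha-ps} with $p\in\N$ and $s\in(0,1]$ always exists. The convention $s=1$ when $\alpha_r\in\N$ ensures that $(-\Delta)^s$ is either a genuine nonlocal fractional Laplacian of order in $(0,1)$ or the classical Laplacian, for which a Green kernel on $\Omega$ is standard.

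\emph{Reformulating \eqref{eq:free1}.} Because $p$ is an integer, $(-\Delta)^p$ is a local differential operator, so $H=(-\Delta)^pF$ is a well-defined distribution (as assumed). On the Fourier side,
\[
\widehat{\calL_rF}=c_{d,r}|\xi|^{2s}\,|\xi|^{2p}\widehat F=c_{d,r}|\xi|^{2s}\widehat H ,
\]
so $\calL_rF=c_{d,r}(-\Delta)^sH$ in $\calS'(\R^d)$. Since $c_{d,r}\neq0$, restricting to test functions supported in $\Omega$ shows that \eqref{eq:free1} holds if and only if $(-\Delta)^sH=c_{d,r}^{-1}Q_{a,r}$ in $\Omega$.

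The one delicate point is justifying the factorization of the multiplier. The symbol $|\xi|^{2p}$ is a polynomial, so its product with $\widehat F$ is always defined. One must then check that $|\xi|^{2s}\widehat H$ coincides with $|\xi|^{2\alpha_r}\widehat F$ as distributions, including at $\xi=0$. I would argue this by pairing against test functions $\varphi$ supported away from the origin, where all symbols are smooth. On the full space, I would rely on the admissibility hypothesis, under which $\calL_rF$ is defined consistently through \eqref{eq:L-symbol}. I expect this bookkeeping at the origin to be the main (mild) obstacle. Any discrepancy would be a distribution supported at $\{0\}$, so it is harmless for the identity restricted to $\Omega$ whenever $0\notin\Omega$. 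In general I would simply adopt the factorized definition $(-\Delta)^s H$ as part of the admissibility assumptions.

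\emph{Exterior condition and Green representation.} I would note explicitly that \eqref{eq:free1} constrains $H$ only inside $\Omega$, so $H=0$ on $\Omega^c$ is an extra ansatz. A quick way to see this is that adding to $F$ any $g$ with $(-\Delta)^{\alpha_r}g=0$ in $\Omega$ but $(-\Delta)^pg\neq0$ outside $\Omega$ preserves \eqref{eq:free1}. Imposing the ansatz turns the problem into \eqref{eq:fractional-dirichlet}. Assuming well-posedness in a class admitting a Green kernel, uniqueness gives \eqref{eq:H-green}, since the right-hand side also solves the problem by the defining property of $G^s_\Omega$.

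\emph{Candidate measure.} Finally, substituting $\calL_rF=c_{d,r}(-\Delta)^sH$, now with $H$ the zero extension, into $\phi=Q_{a,r}-\calL_rF$ yields \eqref{eq:phi-green}. By the converse direction of Theorem~\ref{thm:stationary-characterization}, this $\phi$ is a zero-flux stationary state exactly when \eqref{eq:free2}--\eqref{eq:free4} and the gauge condition hold. None of these follows from the Dirichlet problem, which is why the statement only asserts that they must be verified separately.
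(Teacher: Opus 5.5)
Your argument follows the paper's proof: you factor $\calL_r=c_{d,r}(-\Delta)^s(-\Delta)^p$, restrict to $\Omega$ to get the interior equation, note that $H=0$ on $\Omega^c$ is an extra ansatz, obtain \eqref{eq:H-green} from the assumed well-posedness, and substitute into $\phi=Q_{a,r}-\calL_rF$. Stationarity is then deferred to Theorem~\ref{thm:stationary-characterization}.

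One correction concerns your side remark about the origin. A discrepancy supported at $\xi=0$ on the Fourier side is the Fourier transform of a polynomial in $x$, so it does not vanish on $\Omega$. Whether $0\in\Omega$ is therefore irrelevant, since that condition lives in physical space, not in frequency.

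In the present setting the point can be settled directly. When $p\ge1$, the factor $|\xi|^{2p}$ removes the need for the finite-part subtraction. Up to constants, it leaves the symbols $|\xi|^{-2s}\widehat\phi$ and $|\xi|^{-2s-(a-r)}\widehat\omega$. These are locally integrable, because $2s=d+1+r-2p<d$ and $2s+(a-r)=d+1+a-2p<d$. Multiplying by $|\xi|^{2s}$ then returns exactly the symbol defining $\calL_rF$, with no residual term at the origin. When $p=0$, the convention forces $s=1$, so every symbol is polynomial.

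Your fallback of building the factorization into the admissibility assumptions is also acceptable. It is what the paper does implicitly when it writes the factorization without comment.
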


\begin{proof}
Since
\[
        \calL_r=c_{d,r}(-\Delta)^{\alpha_r}
        =
        c_{d,r}(-\Delta)^s(-\Delta)^p
\]
and \(H=(-\Delta)^pF\), equation \eqref{eq:free1} gives
\[
        c_{d,r}(-\Delta)^sH=Q_{a,r}
        \qquad\text{in }\Omega.
\]
This is the interior equation in \eqref{eq:fractional-dirichlet}.  The condition
\(H=0\) on \(\Omega^c\) is not a consequence of the free-boundary equation; it
is an imposed homogeneous exterior Dirichlet ansatz.  Under this ansatz and the
assumed well-posedness of the fractional Dirichlet problem, the Green
representation \eqref{eq:H-green} is the standard representation formula.
Finally,
\[
        \calL_rF=c_{d,r}(-\Delta)^sH
\]
for the zero extension of \(H\), so the inverse formula
\[
        \phi=Q_{a,r}-\calL_rF
\]
becomes \eqref{eq:phi-green}.  The last assertion follows directly from
Theorem \ref{thm:stationary-characterization}.
\end{proof}
{\begin{remark}
    When \(s=1\), the problem is local, since \(\Delta\) is a local operator.  When
\(s\in(0,1)\), the problem is nonlocal, since \((-\Delta)^s\) is a nonlocal
operator.

The decomposition in \eqref{eq:alpha-ps} is not essential.  Using
polyharmonic variants of Boggio's formula, one could instead take
\(p=0\) and \(s=\alpha_r\).  However, the decomposition
\eqref{eq:alpha-ps} is expected to lead to a simpler computation.

\end{remark}}
\begin{example}[Boggio kernel for balls and exterior balls]
\label{ex:boggio}
For \(0<s<1\) and \(\Omega=B_R(0)\), Boggio's formula gives the Green kernel
\begin{equation}\label{eq:boggio-ball}
        G_{B_R}^s(x,y)
        =c_{d,s}|x-y|^{-(d-2s)}
        \int_0^{\eta_R(x,y)}\frac{t^{s-1}}{(1+t)^{d/2}}\dd t,
\end{equation}
where
\[
        \eta_R(x,y)=\frac{(R^2-|x|^2)(R^2-|y|^2)}{R^2|x-y|^2}.
\]
Boggio's formula and its fractional and polyharmonic variants are classical;
see Boggio~\cite{Boggio1905}, Bogdan--Byczkowski~\cite{BogdanByczkowski1999},
Dyda~\cite{Dyda2012}, and the fractional polyharmonic~(that is $s\geq 1$) treatment of
Dipierro--Grunau~\cite{dipierro2017boggio}.  For \(\Omega=B_R(0)^c\), the
Kelvin transform gives the analogous exterior-ball formula with
\[
        \eta_R(x,y)=\frac{(|x|^2-R^2)(|y|^2-R^2)}{R^2|x-y|^2},
        \qquad x,y\in B_R(0)^c.
\]
\end{example}

\section{Explicit stationary states computed from the stationary characterization}\label{sec:examples}

We arrange the examples by dimension.  The common principle is the same in each case: compute the stationary measure from the stationary-characterization formula~\eqref{eq:phi-green} first, and only afterwards verify the zero-flux condition or compare with the long-time dynamics.  In particular, the numerical curves labelled as theoretical stationary states are computed from the explicit formulas in Proposition \ref{prop:1d-ar} for the one-dimensional case and Proposition \ref{prop:2d-general-disk} for the two-dimensional case, rather than by comparing with the background density. The MATLAB code was generated with the assistance of ChatGPT and is available at \url{https://drive.google.com/file/d/1z9-bay97d8QrgJuKXasRs0ZqAkCwTFVs/view?usp=sharing}.

\subsection{One-dimensional interval formula and numerical verification}

For one-dimensional zero-flux stationary states, we obtain the following representation.

\begin{proposition}[Direct one-dimensional interval representation for \(a\ge r\)]
\label{prop:1d-ar}
Let \(d=1\), let \(0\le r\le a<1\), and fix a compact interval \(I=[A,B]\).
Set
\begin{equation}\label{eq:Ga-def}
        G_a:=D^2V=
        \begin{cases}
        2\omega, & a=0,\\[1mm]
        a(1+a)|\cdot|^{a-1}*\omega, & 0<a<1.
        \end{cases}
\end{equation}
Let \(\phi\) be a zero-flux stationary probability supported on \(I\), and
assume that its potential
\[
        F:=V-\psi_r*\phi
\]
is constant on \((A,B)\).  If \(r=0\), assume moreover the no-endpoint-atom
ansatz
\[
        \phi(\{A\})=\phi(\{B\})=0.
\]
Then \(\phi\) is necessarily the measure \(\phi_I\) given by
\begin{equation}\label{eq:phi-I-direct-riesz}
        {
        \phi_I=
        \begin{cases}
        \displaystyle \frac12G_a\,\one_I\,\dd x, & r=0,\\[3mm]
        \displaystyle \text{the solution of }
        r(1+r)\int_A^B |x-y|^{r-1}\dd\phi_I(y)=G_a(x),\quad x\in(A,B),
        &0<r<1.
        \end{cases}}
\end{equation}

When \(0<r<1\), set
\[
        \Omega:=\R\setminus I,\qquad
        \sigma:=\frac r2,
        \qquad
        Q_{a,r}:=\calL_r V,
\]
and
\[
        \calL_r=c_{1,r}(-\Delta)^{1+\sigma}.
\]
Let \(H\) solve the homogeneous exterior Dirichlet problem
\[
        (-\Delta)^\sigma H=c_{1,r}^{-1}Q_{a,r}
        \quad\text{in }\Omega,
        \qquad
        H=0
        \quad\text{in }I.
\]
Equivalently, if \(\mathcal G_\Omega^\sigma\) denotes the Green kernel of the
homogeneous Dirichlet fractional Laplacian on \(\Omega\), then
\begin{equation*}
        H(x)
        =
        c_{1,r}^{-1}
        \int_\Omega \mathcal G_\Omega^\sigma(x,y)Q_{a,r}(y)\,\dd y,
        \qquad x\in\Omega,
\end{equation*}
and \(H=0\) on \(I\).  The Green-kernel reconstruction is
\begin{equation}\label{eq:rho-green-explicit}
        \phi_I
        =
        \left(Q_{a,r}-c_{1,r}(-\Delta)^\sigma H\right)\llcorner I .
\end{equation}

Conversely, if the candidate \(\phi_I\) in \eqref{eq:phi-I-direct-riesz} is a
nonnegative finite measure, satisfies \(\phi_I(I)=1\), and satisfies the
one-point force condition
\begin{equation}\label{eq:interval-centering-r-positive}
        V'(x_*)=(1+r)\int_A^B\sgn(x_*-y)|x_*-y|^r\dd\phi_I(y)
        \qquad\text{for one }x_*\in(A,B),
\end{equation}
then \(\phi_I\) is a zero-flux stationary probability supported on \(I\).
For \(r=0\), the formula reduces to
\begin{equation}\label{eq:phi-1d-r0-direct-new}
        \phi_I=\frac12V''\,\one_I\,\dd x,
\end{equation}
and the mass and force conditions are equivalent to
\[
        V'(A)=-1,
        \qquad
        V'(B)=1.
\]
In particular, for \(a=r=0\),
\begin{equation}\label{eq:phi-a0-r0-direct-new}
        \phi_I=\omega\one_I\,\dd x,
        \qquad
        \int_{-\infty}^{A}\omega(z)\dd z
        =\int_B^\infty\omega(z)\dd z
        =\frac{\omega(\R)-1}{2}.
\end{equation}
\end{proposition}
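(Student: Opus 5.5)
The argument is one-dimensional and works directly with derivatives of the potential \(F=V-\psi_r*\phi\). By assumption \(F\) is constant on \((A,B)\), so \(F''=0\) there in the sense of distributions. In \(d=1\), \(V''=G_a\) by \eqref{eq:Ga-def}: for \(a=0\) one has \(\psi_0''=2\delta_0\), and for \(0<a<1\) the kernel \(\psi_a''=a(1+a)|x|^{a-1}\) is locally integrable. For the repulsion term the two cases differ.

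\textbf{The case \(r=0\).} Here \((\psi_0*\phi)''=2\phi\) as distributions. Restricting to \((A,B)\) gives \(2\phi=G_a\) there, i.e. \(\phi=\frac12 G_a\) on the open interval. The no-endpoint-atom ansatz then fixes \(\phi\) on all of \(I\), which yields \(\phi=\frac12 V''\one_I\) and in particular \eqref{eq:phi-1d-r0-direct-new}.

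\textbf{The case \(0<r<1\).} Now \((\psi_r*\phi)''=r(1+r)|\cdot|^{r-1}*\phi\), again locally integrable. This is justified because \(\phi\) is a finite measure on \(I\) and \(r-1>-1\); pairing with test functions and using Fubini gives the identity. The condition \(F''=0\) on \((A,B)\) is then exactly the integral equation in \eqref{eq:phi-I-direct-riesz}. To say \(\phi\) is ``necessarily'' \(\phi_I\), I will argue uniqueness of solutions of this Riesz equation on an interval among finite measures on \(I\). Suppose two measures solve it and let \(\nu\) be their difference. Then \(|\cdot|^{r-1}*\nu=0\) on \((A,B)\). Integrating against \(\nu\) gives \(\iint|x-y|^{r-1}\,\dd\nu\,\dd\nu=0\). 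This kernel has Fourier transform \(c|\xi|^{-r}>0\), so it is strictly positive definite and \(\nu=0\). The endpoints are harmless here since \(|\cdot|^{r-1}*\nu\) is integrable against \(|\nu|\); I would assume finite energy or that \(\phi\) gives no mass to the endpoints in the limit.

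\textbf{The Green-kernel reconstruction.} This comes from Theorem~\ref{thm:stationary-characterization} and Proposition~\ref{prop:green} with \(d=1\). In that setting \(\alpha_r=1+\sigma\), with \(p=1\) and \(s=\sigma=r/2\). Take \(H=-F''\), which vanishes on \(I\) because \(F\) is constant there; so the exterior condition is now \emph{derived} rather than imposed. On \(\Omega=\R\setminus I\) we have \(\phi=0\), so \eqref{eq:free1} gives \((-\Delta)^\sigma H=c_{1,r}^{-1}Q_{a,r}\). Uniqueness for this exterior fractional Dirichlet problem, in the decaying class, gives the Green representation. Then \eqref{eq:rho-green-explicit} is \eqref{eq:phi-green} restricted to \(I\).

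\textbf{The converse.} Assume the candidate \(\phi_I\) is a nonnegative probability measure satisfying the integral equation. Then \(F''=0\) on \((A,B)\), so \(F'\) is constant there. The one-point condition \eqref{eq:interval-centering-r-positive} states exactly \(F'(x_*)=0\), since \(F'=V'-K_r*\phi_I\). Hence \(F'\equiv0\) on \((A,B)\), and \(\phi_I\,F'=0\) as long as \(\phi_I\) charges no endpoints. This is automatic for \(r>0\) if \(\phi_I\) is absolutely continuous, and holds by the ansatz for \(r=0\). That gives the zero-flux property.

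\textbf{The case \(r=0\), conditions at the endpoints.} Here \(K_0*\phi_I(x)=2\phi_I((-\infty,x))-1\). Integrating \(\phi_I=\frac12V''\) gives \(K_0*\phi_I(x)=V'(x)-V'(A)-1\). Zero force is then \(V'(A)=-1\), and unit mass is \(\frac12(V'(B)-V'(A))=1\), i.e. \(V'(B)=1\). For \(a=0\), \(V'(x)=\int\sgn(x-z)\omega(z)\,\dd z\). The two equations become tail-mass balances, and solving them gives \eqref{eq:phi-a0-r0-direct-new}.

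I expect the main obstacle to be the rigorous handling of the Riesz equation in the case \(0<r<1\). The positive-definiteness argument needs finite energy of the difference measure. Justifying that \(H=-F''\) lies in the class where the exterior Dirichlet problem is uniquely solvable, given the growth of \(V\) at infinity, is delicate; I would state it as a standing regularity assumption.
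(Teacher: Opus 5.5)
Your proposal is correct and follows essentially the same route as the paper: it uses $F''=0$ on $(A,B)$ together with $V''=G_a$ and splits into $r=0$ versus $0<r<1$; it gets the Green reconstruction from $H=-F''$ and the factorization $\mathcal{L}_r=c_{1,r}(-\Delta)^\sigma(-\Delta)$; and it obtains the converse and the $r=0$ endpoint conditions by integrating $F''=0$ and fixing the constant with the one-point force condition. Your positive-definiteness argument for uniqueness of the finite Riesz equation goes beyond the paper, which leaves uniqueness implicit in ``the solution''; separately, your endpoint caveat in the converse for $r>0$ can be dropped, since $F'=V'-K_r*\phi_I$ is continuous when $r>0$, so $F'=0$ on $(A,B)$ extends to all of $I$.
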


{
\begin{remark}
    The formula for \(\phi_I\) in the case \(d=1\) and \(r=0\) was also obtained
in \cite{difrancesco2015asymptotic}.
\end{remark}
}

\begin{proof}
Let
\[
        F:=V-\psi_r*\phi.
\]
Since \(F\) is constant on \((A,B)\), we have
\[
        D^2F=0
        \qquad\text{in }\calD'((A,B)).
\]
The distributional second derivative of \(V\) is exactly \eqref{eq:Ga-def},
because
\[
        D^2|x|=2\delta_0,
        \qquad
        D^2|x|^{1+a}=a(1+a)|x|^{a-1}\quad(0<a<1).
\]
If \(r=0\), then
\[
        D^2(|\cdot|*\phi)=2\phi
\]
as a distribution on \(\R\).  Restricting to \((A,B)\) gives
\[
        2\phi\llcorner(A,B)=G_a\,\one_{(A,B)}\,\dd x.
\]
Under the no-endpoint-atom ansatz, this gives
\[
        \phi=\frac12G_a\,\one_I\,\dd x,
\]
which is the first line of \eqref{eq:phi-I-direct-riesz}.

If \(0<r<1\), then for \(x\in(A,B)\),
\begin{equation}\label{eq:finite-riesz-equation}
        D^2(\psi_r*\phi)(x)
        =
        r(1+r)\int_A^B |x-y|^{r-1}\dd\phi(y).
\end{equation}
Thus \(D^2F=0\) in \((A,B)\) gives exactly the finite Riesz equation in the
second line of \eqref{eq:phi-I-direct-riesz}.

We now rewrite the case \(0<r<1\) in the Green-kernel form of
Proposition~\ref{prop:green}.  Let
\[
        \Omega:=\R\setminus I,
        \qquad
        \sigma:=\frac r2.
\]
Since \(\phi\) is supported on \(I\), we have \(\phi=0\) in \(\Omega\).  Hence,
in \(\Omega\),
\[
        \calL_rF
        =
        \calL_rV-\calL_r(\psi_r*\phi)
        =
        Q_{a,r}.
\]
Using
\[
        \calL_r=c_{1,r}(-\Delta)^{1+\sigma}
        =
        c_{1,r}(-\Delta)^\sigma(-\Delta),
\]
and setting
\[
        H:=(-\Delta)F,
\]
we obtain
\[
        (-\Delta)^\sigma H=c_{1,r}^{-1}Q_{a,r}
        \qquad\text{in }\Omega.
\]
Moreover, since \(F\) is constant on \((A,B)\), we have
\[
        H=0
        \qquad\text{in }I.
\]
Thus \(H\) satisfies the homogeneous exterior Dirichlet problem
\[
        (-\Delta)^\sigma H=c_{1,r}^{-1}Q_{a,r}
        \quad\text{in }\Omega,
        \qquad
        H=0
        \quad\text{in }I.
\]
By the Green representation for this Dirichlet problem,
\[
        H(x)
        =
        c_{1,r}^{-1}
        \int_\Omega \mathcal G_\Omega^\sigma(x,y)Q_{a,r}(y)\,\dd y,
        \qquad x\in\Omega.
\]
Denoting this solution by \(H_I\), the inverse formula gives, on \(I\),
\[
        \phi_I
        =
        Q_{a,r}-\calL_rF
        =
        Q_{a,r}-c_{1,r}(-\Delta)^\sigma H_I.
\]
This proves the Green-kernel representation \eqref{eq:rho-green-explicit}.

Conversely, suppose the candidate \(\phi_I\) is nonnegative, has mass one, and
satisfies \eqref{eq:interval-centering-r-positive}.  The equation
\[
        D^2(V-\psi_r*\phi_I)=0
        \qquad\text{in }(A,B)
\]
implies that
\[
        V'-K_r*\phi_I
\]
is constant on \((A,B)\).  The one-point condition
\eqref{eq:interval-centering-r-positive} makes this constant zero.  Therefore
\[
        K_a*\omega(x)-K_r*\phi_I(x)=0,
        \qquad x\in(A,B).
\]
Since \(\phi_I\) is supported on \(I\), this gives
\[
        \phi_I(K_a*\omega-K_r*\phi_I)=0.
\]
Thus \(\phi_I\) is a zero-flux stationary state.

For \(r=0\), the formula above gives
\[
        \phi_I=\frac12V''\,\one_I\,\dd x.
\]
Since there are no endpoint atoms,
\[
        K_0*\phi_I(x)=2\phi_I([A,x])-\phi_I(I),
        \qquad x\in(A,B).
\]
The mass condition is
\[
        \frac12\bigl(V'(B)-V'(A)\bigr)=1.
\]
The force balance at one point is equivalent to
\[
        V'(A)=-1.
\]
Together these two conditions are equivalent to
\[
        V'(A)=-1,
        \qquad
        V'(B)=1.
\]
When \(a=0\),
\[
        V'(x)=K_0*\omega(x)
        =
        2\int_{-\infty}^x\omega(z)\dd z-\omega(\R),
\]
which gives \eqref{eq:phi-a0-r0-direct-new}.
\end{proof}

We also include a one-dimensional experiment in the attractive-dominant regime
$a>r$, in order to verify the direct interval characterization of Proposition
\ref{prop:1d-ar}.  This is the same condition used in the compactness and
long-time convergence arguments.

The experiment uses the symmetric background
\begin{equation}\label{eq:1d-ageqr-background}
        \omega_m(x)=\frac m2\one_{[-1,1]}(x),
        \qquad m=1.8,
\end{equation}
and the exponent pair
\begin{equation}\label{eq:1d-ageqr-params}
        d=1,
        \qquad a=0.8,
        \qquad r=0.2.
\end{equation}
By symmetry the stationary support is sought in the form
$I=[-L,L]$.  The characterization in Proposition \ref{prop:1d-ar} gives the
finite-interval Riesz equation
\begin{equation}\label{eq:1d-ageqr-riesz-equation}
        r(1+r)\int_{-L}^{L}|x-y|^{r-1}\phi_I(y)\,\dd y
        =G_a(x),
        \qquad x\in(-L,L),
\end{equation}
where
\begin{equation}\label{eq:1d-ageqr-Ga}
        G_a(x)=(\psi_a*\omega_m)''(x)
        =\frac{m(1+a)}2
        \bigl(\operatorname{sgn}(x+1)|x+1|^a
        -\operatorname{sgn}(x-1)|x-1|^a\bigr).
\end{equation}
Since the background and interval are even, the one-point force condition in
\eqref{eq:interval-centering-r-positive} is automatically satisfied at
$x_*=0$.  Thus the remaining scalar free-boundary condition is
\begin{equation}\label{eq:1d-ageqr-masscondition}
        \int_{-L}^{L}\phi_I(x)\,\dd x=1.
\end{equation}
The numerical implementation uses two independent discretizations of the
stationary characterization.  First, it discretizes the finite Riesz equation
\eqref{eq:1d-ageqr-riesz-equation}, equivalently the statement formula
\eqref{eq:phi-I-direct-riesz}, with piecewise-constant cell densities on
\([-L,L]\).  The scalar mass condition \eqref{eq:1d-ageqr-masscondition}
selects
\begin{equation}\label{eq:1d-ageqr-L-numeric}
        L\approx 0.2727.
\end{equation}
Second, for this fixed value of \(L\), the code evaluates the Green-kernel
formula \eqref{eq:rho-green-explicit}. The middle panel of Figure \ref{fig:1d-ageqr-experiment} plots the
cumulative mass obtained from this direct Green-kernel reconstruction together
with the finite-Riesz solve and the long-time particle distribution.

After computing the theoretical stationary distributions, the code evolves the particle dynamics
\begin{equation}\label{eq:1d-particle-ageqr}
        \dot X_i(t)
        =-K_a*\omega_m(X_i(t))
        +\frac1N\sum_{j=1}^N K_r(X_i(t)-X_j(t)),
        \qquad
        K_q(z)=(1+q)\operatorname{sgn}(z)|z|^q.
\end{equation}
The comparison is therefore between the distribution derived from the stationary
characterization and the long-time particle distribution $\phi_T$.

\begin{figure}[t]
\centering
\includegraphics[width=0.96\textwidth]{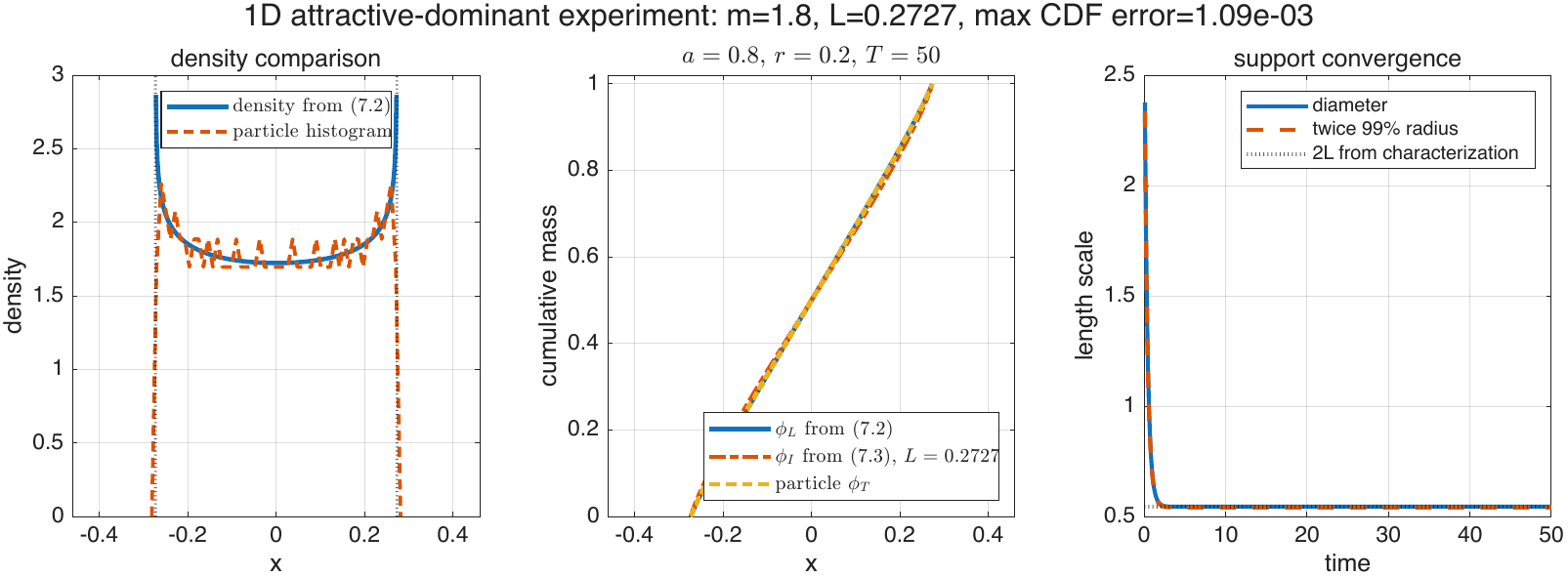}
\caption{One-dimensional verification in the attractive-dominant regime
$a>r$, with $a=0.8$, $r=0.2$, and
$\omega_m=(m/2)\one_{[-1,1]}$, $m=1.8$.  The left panel compares the density
computed from the finite Riesz equation \eqref{eq:1d-ageqr-riesz-equation} with
a histogram of the long-time particle distribution.  The middle panel compares
three cumulative masses: the finite-Riesz stationary state, the direct
Green-kernel reconstruction \eqref{eq:rho-green-explicit} with $L=0.2727$, and
$\phi_T$ at terminal time $T=50$.  The right panel shows that the particle
diameter approaches the support length $2L$ selected by the stationary
characterization.}
\label{fig:1d-ageqr-experiment}
\end{figure}

\subsection{Two-dimensional disk formula and numerical verification}

We now develop the two-dimensional construction in the same form as the
stationary-characterization derivation in the draft.  In dimension \(d=2\), recovering \(\phi\) from the potential
\(\psi_r*\phi\) requires a fractional nonlocal inversion for
\(0\le r<1\); it cannot be done by applying a purely local differential
operator.  Hence the stationary measure obtained on a disk is not
computed by restricting the background density.  Instead one first solves the
fractional exterior Dirichlet problem for the auxiliary quantity
$H=(-\Delta)F$, and only then recovers $\phi$ from the inverse-kernel formula.

Let $d=2$ and let
\begin{equation}\label{eq:2d-params-general}
        0\le r\le a<1,
        \qquad
        \alpha_r:=\frac{3+r}{2},
        \qquad
        s:=\frac{1+r}{2}\in\left[\frac12,1\right).
\end{equation}
Then
\begin{equation}\label{eq:2d-Lr-factor}
        \calL_r=\lambda_{2,r}(-\Delta)^{1+s},
        \qquad \lambda_{2,r}\ne0,
\end{equation}
where the constant depends only on the Fourier convention.  The source term in \eqref{eq:fractional-dirichlet} is
\begin{equation}\label{eq:2d-Qar}
        Q_{a,r}=\calL_r(\psi_a*\omega),
        \qquad
        \widehat{Q_{a,r}}(\xi)=\frac{\gamma_{2,a}}{\gamma_{2,r}}
        |\xi|^{r-a}\widehat\omega(\xi).
\end{equation}
With the normalization constant,$\calL_r\psi_r=\delta_0$ and then $Q_{a,a}=\omega$.  When
$a>r$, $Q_{a,r}$ is the Riesz-type potential
\begin{equation}\label{eq:2d-Q-riesz}
        Q_{a,r}=C_{2,a,r}\,\omega*|x|^{-(2-a+r)}
\end{equation}
in the finite-part sense.  Since $a-r\in(0,1)$ in this case, the kernel in
\eqref{eq:2d-Q-riesz} is locally integrable.

\begin{proposition}[Two-dimensional disk formula for $0\le r\le a<1$]
\label{prop:2d-general-disk}
Let $d=2$, $0\le r\le a<1$, and let $Q_{a,r}$ be defined by
\eqref{eq:2d-Qar}.  Fix $R>0$ and suppose that the stationary support is the
closed disk
\[
        S=B_R(0),
        \qquad
        \Omega=\R^2\setminus S.
\]
Define $H_R$ as the solution of the homogeneous exterior fractional Dirichlet
problem
\begin{equation}\label{eq:2d-H-dirichlet-general}
        (-\Delta)^sH_R=\lambda_{2,r}^{-1}Q_{a,r}
        \quad\hbox{in }B_R(0)^c,
        \qquad
        H_R=0\quad\hbox{in }B_R(0),
\end{equation}
where $s=(1+r)/2$.  Equivalently,
\begin{equation}\label{eq:2d-H-green-general}
        H_R(x)=\lambda_{2,r}^{-1}
        \int_{B_R^c}G_{R}^{s}(x,y)Q_{a,r}(y)\dd y,
        \qquad x\in B_R^c,
\end{equation}
and $H_R=0$ on $B_R$.  Here the exterior disk Green kernel is the
Boggio--Kelvin kernel
\begin{equation}\label{eq:2d-green-general}
        G_{R}^{s}(x,y)
        =c_{2,s}|x-y|^{-(2-2s)}
        \int_0^{\eta_R(x,y)}\frac{t^{s-1}}{1+t}\dd t,
        \qquad x,y\in B_R^c,
\end{equation}
with
\begin{equation}\label{eq:2d-eta-general}
        \eta_R(x,y)
        :=\frac{(|x|^2-R^2)(|y|^2-R^2)}{R^2|x-y|^2}.
\end{equation}
Since $2-2s=1-r$, formula \eqref{eq:2d-green-general} becomes
\begin{equation}\label{eq:2d-green-general-short}
        G_{R}^{s}(x,y)
        =c_{2,s}|x-y|^{-(1-r)}
        \int_0^{\eta_R(x,y)}\frac{t^{(r-1)/2}}{1+t}\dd t.
\end{equation}
The measure selected by the stationary characterization is
\begin{equation}\label{eq:2d-phi-general}
        {\qquad
        \phi_R
        =\Bigl(Q_{a,r}-\lambda_{2,r}(-\Delta)^sH_R\Bigr)
        \lfloor B_R(0).
        \qquad}
\end{equation}
In particular, for $x\in B_R(0)$, because $H_R(x)=0$,
\begin{equation}\label{eq:2d-phi-general-inside}
        (-\Delta)^sH_R(x)
        =-\kappa_{2,s}\int_{B_R^c}\frac{H_R(y)}{|x-y|^{2+2s}}\dd y
        =-\kappa_{2,s}\int_{B_R^c}\frac{H_R(y)}{|x-y|^{3+r}}\dd y,
\end{equation}
where $\kappa_{2,s}>0$ is the singular-integral normalization for the
fractional Laplacian.  Thus the correction term in two dimensions is a
nonlocal bulk term generated by the exterior solution $H_R$.

If the measure in \eqref{eq:2d-phi-general} is nonnegative, has total mass
one, and satisfies the force identity
\begin{equation}\label{eq:2d-force-verification-general}
        \int_{\R^2}K_a(x-y)\omega(y)\dd y
        =\int_{B_R}K_r(x-y)\dd\phi_R(y),
        \qquad x\in B_R(0),
\end{equation}
then $\phi_R$ is a zero-flux stationary probability measure.
\end{proposition}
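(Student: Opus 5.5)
The plan is to specialize Proposition~\ref{prop:green} and Theorem~\ref{thm:stationary-characterization} to $d=2$, $\Omega=B_R(0)^c$, and then prove the sufficiency claim directly.

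\emph{Decomposition of the exponent.} With $d=2$ we have $\alpha_r=(3+r)/2=1+s$, where $s=(1+r)/2\in[1/2,1)$. Thus in \eqref{eq:alpha-ps} we take $p=1$, and \eqref{eq:2d-Lr-factor} is \eqref{eq:L-fraclap} rewritten with $\lambda_{2,r}=c_{2,r}$. Set $H_R=(-\Delta)F$ for $F=\psi_a*\omega-\psi_r*\phi_R$.

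\emph{Exterior problem and Green representation.} On $\Omega=B_R^c$, \eqref{eq:free1} gives $\lambda_{2,r}(-\Delta)^sH_R=Q_{a,r}$. On the support $S=B_R$ we impose $\nabla F=0$; then $F$ is constant there, so $H_R=0$ in $B_R$. This is exactly \eqref{eq:2d-H-dirichlet-general}. For the Green representation \eqref{eq:2d-H-green-general} we use the exterior-ball Boggio--Kelvin kernel of Example~\ref{ex:boggio} with $d=2$, so that $(1+t)^{d/2}=1+t$, together with the identity $2-2s=1-r$; this yields \eqref{eq:2d-green-general-short}. Formula \eqref{eq:2d-phi-general} is then \eqref{eq:phi-green} restricted to $S$, because $\phi_R$ vanishes on $\Omega$.

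\emph{Interior formula.} For $x\in B_R$, insert $H_R(x)=0$ into the singular-integral form
\[
(-\Delta)^sH(x)=\kappa_{2,s}\,\mathrm{P.V.}\int\frac{H(x)-H(y)}{|x-y|^{2+2s}}\dd y .
\]
For $x$ in the open disk the kernel is nonsingular on the support of $H_R$, so no principal value is needed, and we obtain \eqref{eq:2d-phi-general-inside} with $2+2s=3+r$.

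\emph{Sufficiency.} Suppose $\phi_R\ge0$, $\phi_R(\R^2)=1$, and the force identity \eqref{eq:2d-force-verification-general} holds. The identity states $K_a*\omega-K_r*\phi_R=0$ on $B_R$. Since $\phi_R$ is supported in $\overline{B_R}$, we get $\phi_R(K_a*\omega-K_r*\phi_R)=0$, at least up to the boundary circle. We handle the circle either by noting that $\phi_R$, being absolutely continuous, does not charge it, or by continuity of both convolutions, which holds because $r,a\ge0$ and $K_s$ is locally integrable and continuous away from $0$ (for $s=0$, $d=2$, the convolution of a bounded kernel with a density is continuous). Hence \eqref{eq:zeroflux-test} holds and $\phi_R$ is a zero-flux stationary state.

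\emph{Main obstacle.} The hard step is justifying the Green representation: existence and uniqueness for the exterior fractional Dirichlet problem with a source $Q_{a,r}$ that decays only like a Riesz potential, $|x|^{-(2-a+r)}$. Boggio--Kelvin theory is cleanest for bounded domains. I would first apply the Kelvin transform to move the problem onto the ball, and check that the transformed source lies in a weighted class for which the formula of Example~\ref{ex:boggio} applies. If that fails, I would state well-posedness as an assumption, as Proposition~\ref{prop:green} does. The sufficiency direction, however, does not depend on this step, since it uses only the verified force identity.
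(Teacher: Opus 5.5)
Your proposal is correct and follows the paper's proof essentially step by step. It uses the splitting $\alpha_r=1+s$ with $p=1$, derives the exterior Dirichlet problem for $H=(-\Delta)F$ from \eqref{eq:free1} together with constancy of $F$ on the disk, takes the Kelvin-transported Boggio kernel with $d=2$, obtains the interior singular-integral formula from $H_R=0$ near $x$, and checks zero flux directly from the force identity. Your extra care at the circle $\partial B_R$ (continuity of $K_a*\omega$ and $K_r*\phi_R$) and your explicit flagging of exterior well-posedness as the step to be assumed or proved refine points the paper passes over; they do not constitute a different route.
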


\begin{proof}
Let
\[
        F=\psi_a*\omega-\psi_r*\phi.
\]
The stationary characterization gives
\[
        \phi=Q_{a,r}-\calL_rF.
\]
Since $\phi=0$ in $\Omega=B_R^c$, one has
\begin{equation}\label{eq:2d-free-eq-proof}
        \calL_rF=Q_{a,r}
        \qquad\hbox{in }B_R^c.
\end{equation}
Using \eqref{eq:2d-Lr-factor} and setting $H=(-\Delta)F$, equation
\eqref{eq:2d-free-eq-proof} becomes
\[
        (-\Delta)^sH=\lambda_{2,r}^{-1}Q_{a,r}
        \qquad\hbox{in }B_R^c.
\]
On the support $S=B_R$, the zero-flux condition is $\nabla F=0$; hence $F$ is
constant on the interior of $S$ and therefore $H=(-\Delta)F=0$ in $B_R$ away
from the free boundary.  This gives the exterior Dirichlet problem
\eqref{eq:2d-H-dirichlet-general}.  The Green representation
\eqref{eq:2d-H-green-general} with the explicit kernel
\eqref{eq:2d-green-general} is exactly the Boggio kernel for a ball transported
to the exterior domain by the Kelvin transform.  Finally, substituting the
computed $H_R$ into the inverse formula
$\phi=Q_{a,r}-\lambda_{2,r}(-\Delta)^sH_R$ gives
\eqref{eq:2d-phi-general}.  Formula \eqref{eq:2d-phi-general-inside} follows
from the singular-integral representation of $(-\Delta)^s$ and the identity
$H_R=0$ on $B_R$.  If the mass, nonnegativity, and force identities hold, then
$\phi_R\nabla F_{\phi_R}=0$ on the support, which is precisely the zero-flux
stationary condition.
\end{proof}

\begin{remark}
When $a=r=0$, one has $s=1/2$ and $Q_{0,0}=\omega$.  The integral in
\eqref{eq:2d-green-general-short} becomes
\[
        \int_0^{\eta}\frac{t^{-1/2}}{1+t}\dd t
        =2\arctan\sqrt\eta.
\]
\end{remark}

For the radial numerical example below we take
\begin{equation}\label{eq:2d-numerics-background}
        \omega_m(x)=\frac{m}{2\pi}e^{-|x|^2/2},
        \qquad m=1.8,
\end{equation}
and use the nontrivial exponent pair
\begin{equation}\label{eq:2d-numerics-params}
        a=0.6,
        \qquad r=0.2.
\end{equation}
For radial measures it is convenient to work with polar coordinate.  If
$\dd\mu(s)=2\pi s\phi(s)\dd s$, define
\begin{equation}\label{eq:ring-kernel-general}
        A_q(\xi,\eta)
        :=\frac{1+q}{2\pi}\int_0^{2\pi}
        \bigl(\xi-\eta\cos\theta\bigr)
        \bigl(\xi^2+\eta^2-2\xi\eta\cos\theta\bigr)^{(q-1)/2}\dd\theta.
\end{equation}
Then the radial force identity \eqref{eq:2d-force-verification-general} is
\begin{equation}\label{eq:2d-radial-system-general}
        \int_0^R A_r(\xi,\eta)\dd\mu_*(\eta)
        =m\int_0^\infty A_a(\xi,\eta)\eta e^{-\eta^2/2}\dd\eta,
        \qquad 0<\xi<R,
        \qquad
        \mu_*([0,R])=1.
\end{equation}
The MATLAB script accompanying this manuscript computes the theoretical
two-dimensional stationary distribution in two ways.  The first curve is obtained
from the radial zero-flux system \eqref{eq:2d-radial-system-general}, which is
the numerically stable radial verification form of the stationary condition.  In
the figure we use $220$ radial unknowns, $260$ test radii, $1000$ background
quadrature radii on $[0,7]$, and $360$ angular quadrature nodes for the main
Galerkin matrix.  The mass constraint is appended with a large weight, and the
nonnegative least-squares problem is solved for the ring masses.

The second curve is an independent direct quadrature of the inverse-kernel
reconstruction formula \eqref{eq:2d-phi-general} with the support radius fixed at
\begin{equation}\label{eq:2d-numerics-R}
        R=0.33.
\end{equation}
This direct reconstruction is considerably more delicate than the radial force
system, because it contains both the exterior fractional Green integral and the
interior singular integral in \eqref{eq:2d-phi-general-inside}.  To reduce this
quadrature error, the code evaluates $Q_{a,r}$ by Gauss--Laguerre quadrature after
the change of variables $t=k^2/2$ in the Hankel representation
\[
        Q_{a,r}(\rho)\propto
        \int_0^\infty k^{1+r-a}e^{-k^2/2}J_0(k\rho)\,\dd k
        =\int_0^\infty (2t)^{(r-a)/2}e^{-t}J_0(\rho\sqrt{2t})\,\dd t.
\]
The exterior variable in the Boggio--Kelvin representation is placed on a
boundary-refined mesh
\[
        \eta_j=R+(L_{\rm ext}-R)u_j^{1.55},
        \qquad L_{\rm ext}=8,
\]
using $220$ exterior nodes and $240$ angular nodes for the direct reconstruction.
The remaining global multiplicative constant, which depends on the Fourier and
fractional-Laplacian normalizations, is fixed by the mass-one constraint and by
minimizing the residual of \eqref{eq:2d-radial-system-general}.  The middle panel
of Figure \ref{fig:2d-ar-phiT-compare} plots this direct \(\phi_R\) reconstruction
as a separate orange curve with markers.  This curve is computed directly from
\eqref{eq:2d-phi-general} at the fixed radius \(R=0.33\); it is not obtained
from the zero-flux least-squares solve.  The same panel also shows the zero-flux
Galerkin stationary curve and the long-time particle distribution \(\phi_T\), both
computed at terminal time \(T=50\).  The small residual discrepancy of the direct
curve should be read as the numerical error of evaluating the fractional Green
formula, not as a separate theoretical stationary state.

The same script independently evolves the radial particle approximation
\begin{equation}\label{eq:2d-radial-dynamics-numeric}
        \dot R_i(t)
        =-m\int_0^\infty A_a(R_i(t),\eta)\eta e^{-\eta^2/2}\dd\eta
        +\frac1N\sum_{j=1}^N A_r(R_i(t),R_j(t)).
\end{equation}
Thus the comparison is between the distribution derived from the stationary
characterization and the long-time numerical distribution $\phi_T$, not between
$\phi_*$ and the background $\omega_m$.

\begin{figure}[t]
\centering
\includegraphics[width=0.96\textwidth]{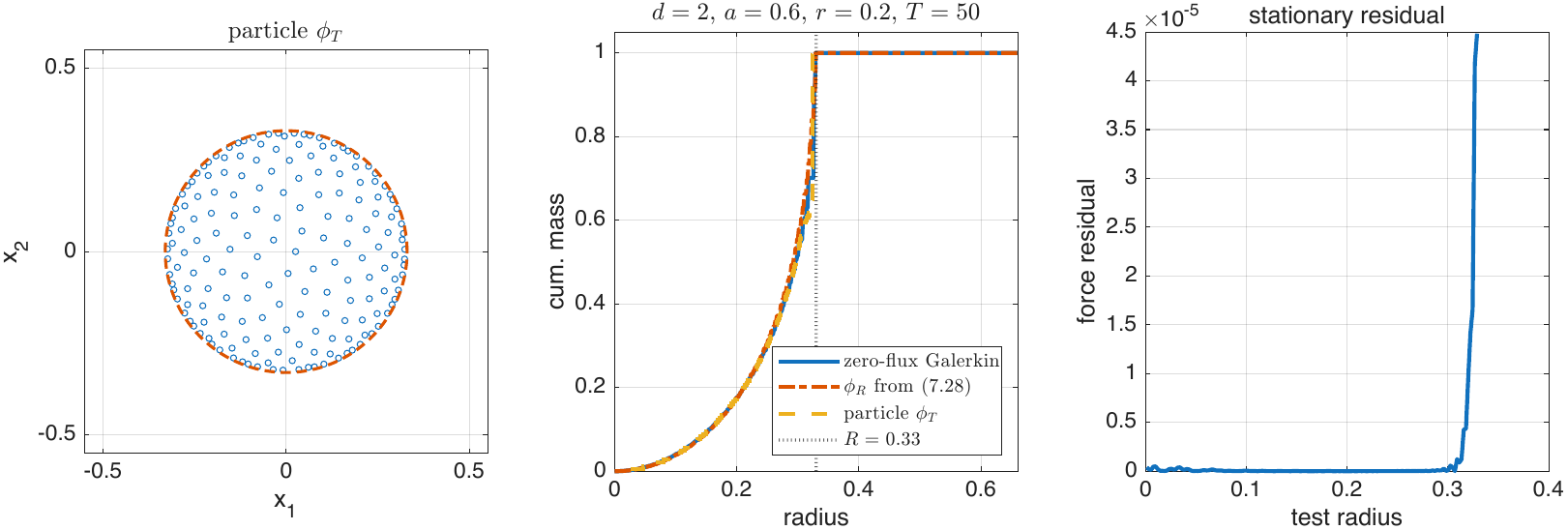}
\caption{Two-dimensional numerical verification for $a=0.6$, $r=0.2$,
$m=1.8$.  The left panel shows a representative particle cloud for the long-time
radial particle solution $\phi_T$.  The middle panel compares three cumulative
radial masses: the stationary curve selected by the radial zero-flux Galerkin
system \eqref{eq:2d-radial-system-general}, the independently computed direct
reconstruction $\phi_R$ from \eqref{eq:2d-phi-general} with $R=0.33$, and the
empirical cumulative mass of $\phi_T$ at terminal time $T=50$.  The right panel shows the
residual of the discretized stationary equations.}
\label{fig:2d-ar-phiT-compare}
\end{figure}

\subsection{Three-dimensional local examples}

In the local case $d=3$ and $a=r=0$, the inverse-kernel characterization takes
an especially concrete form.  Since
\begin{equation}\label{eq:Delta2-abs-3d}
        \Delta^2 |x|=-8\pi\delta_0,
\end{equation}
we have
\begin{equation}\label{eq:L0-local-d3}
        \calL_0=-\frac1{8\pi}\Delta^2,
        \qquad
        Q_{0,0}=\omega.
\end{equation}
Thus, if
\begin{equation}\label{eq:F-U-local-d3}
        F=|\cdot|*\omega-|\cdot|*\phi,
        \qquad
        U:=\frac{1}{8\pi}H=-\frac1{8\pi}\Delta F,
\end{equation}
then the inverse formula $\phi=Q_{0,0}-\calL_0F$ becomes
\begin{equation}\label{eq:phi-from-U-3d}
        \phi=\omega-\Delta U
        \qquad\text{in }\calD'(\R^3).
\end{equation}
Let $S:=\supp\phi$ and $\Omega:=\R^3\setminus S$.  Since $\phi=0$ in
$\Omega$, \eqref{eq:phi-from-U-3d} gives
\begin{equation}\label{eq:exterior-poisson-U}
        \Delta U=\omega\quad\text{in }\Omega.
\end{equation}
Since the zero-flux condition is $\phi\nabla F=0$, the characterization also
requires $\nabla F=0$ on $S$, hence $F$ is constant on each connected component
of the interior of $S$ and $U=-(8\pi)^{-1}\Delta F=0$ there.  Therefore, after
choosing the radial free boundary $S=B_R(0)$, the characterization prescribes
\begin{equation}\label{eq:exterior-poisson-U-ball}
        \Delta U=\omega\quad\text{in }B_R(0)^c,
        \qquad
        U=0\quad\text{in }B_R(0),
        \qquad
        U(x)\to0\quad\text{as }|x|\to\infty,
\end{equation}
and then forces
\begin{equation}\label{eq:phi-by-characterization}
        \phi=\omega-\Delta U.
\end{equation}
Boundary singular parts arise only when the distributional Laplacian in
\eqref{eq:phi-by-characterization} crosses $\partial B_R$.

We shall use two elementary radial computations, which will be used in the calculation of the examples.  First, suppose
$G_R(x)=g_R(|x|)$ solves
\begin{equation}\label{eq:radial-poisson-general}
        \Delta G_R=f(|x|) \text{ in } |x|>R,
        \qquad
        G_R=0\text{ in }B_R,
        \qquad
        G_R(x)\to0\text{ as }|x|\to\infty,
\end{equation}
where $\int_R^\infty s|f(s)|\dd s<\infty$.  For $r>R$,
\begin{equation}\label{eq:radial-ode}
        (r^2g_R'(r))'=r^2f(r).
\end{equation}
Writing
\[
        r^2g_R'(r)=C_R+\int_R^r s^2f(s)\dd s
\]
and imposing $g_R(\infty)=g_R(R)=0$, we obtain
\begin{equation}\label{eq:normal-derivative-formula}
        g_R'(R+)=-\frac1R\int_R^\infty s f(s)\dd s.
\end{equation}
Indeed,
\[
0=\int_R^\infty g_R'(u)\dd u
=\frac{C_R}{R}+\int_R^\infty s f(s)\dd s.
\]
Consequently,
\begin{equation}\label{eq:distributional-lap-radial}
        \Delta G_R
        =f(|x|)\one_{\{|x|>R\}}\dd x
        +g_R'(R+)\,\Hh^2\lfloor\partial B_R
\end{equation}
in the sense of distributions.

Second, for $x\in\overline{B_R}$,
\begin{align}
\int_{|y|>R} f(|y|)\frac{x-y}{|x-y|}\dd y
        &=\frac{8\pi}{3}\left(\int_R^\infty s f(s)\dd s\right)x,
        \label{eq:radial-volume-force}\\
\int_{\partial B_R}\frac{x-y}{|x-y|}\dd\Hh^2(y)
        &=\frac{8\pi}{3}R x.\label{eq:radial-surface-force}
\end{align}
To prove these identities, first compute, for $|x|<s$,
\[
        \int_{\partial B_s}|x-y|\dd\Hh^2(y)
        =4\pi s^3+\frac{4\pi}{3}s|x|^2.
\]
Taking the $x$-gradient gives the surface identity with $s$ in place of $R$;
integrating it over $s\in(R,\infty)$ with density $f(s)$ gives
\eqref{eq:radial-volume-force}.  The formula extends to
$x\in\partial B_R$ by continuity of the angular integral.

\begin{proposition}[Uniform ball background in three dimensions]\label{prop:3d-uniform}
Let $d=3$, $a=r=0$, and
\begin{equation}\label{eq:omega-uniform-ball}
        \omega(x)=c_m\one_{B_2(0)}(x),
        \qquad
        c_m:=\frac{3m}{32\pi},
        \qquad
        m\ge1.
\end{equation}
Let $R\in(0,2]$ be the unique solution of
\begin{equation}\label{eq:R-cubic}
        R^3-12R+\frac{16}{m}=0.
\end{equation}
Then the radial free boundary $S=B_R(0)$ gives, through the characterization
\eqref{eq:exterior-poisson-U-ball}--\eqref{eq:phi-by-characterization}, the
measure
\begin{equation}\label{eq:phi-3d-uniform}
        \phi
        =c_m\one_{B_R(0)}\dd x
        +c_m\left(\frac{2}{R}-\frac{R}{2}\right)
        \Hh^2\lfloor\partial B_R(0).
\end{equation}
This measure is a zero-flux stationary probability measure.
\end{proposition}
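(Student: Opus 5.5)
The plan is to compute the candidate $\phi$ explicitly from \eqref{eq:exterior-poisson-U-ball}--\eqref{eq:phi-by-characterization}. I then check the three properties of a zero-flux stationary probability directly: nonnegativity with mass one, and the force balance $K_0*\omega=K_0*\phi$ on $\supp\phi=\overline{B_R}$. The last step uses Definition~\ref{def:zeroflux} and bypasses the abstract admissibility hypotheses of Theorem~\ref{thm:stationary-characterization}.

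\emph{Step 1 (computing $\phi$).} Apply the radial computation \eqref{eq:radial-poisson-general}--\eqref{eq:distributional-lap-radial} with $f=c_m\one_{[0,2)}$. Then \eqref{eq:normal-derivative-formula} gives
\[
g_R'(R+)=-\frac1R\int_R^2 c_m s\dd s=-\frac{c_m(4-R^2)}{2R}.
\]
By \eqref{eq:distributional-lap-radial},
\[
\Delta U=c_m\one_{\{R<|x|<2\}}\dd x-c_m\Bigl(\frac2R-\frac R2\Bigr)\Hh^2\lfloor\partial B_R .
\]
Inserting this into $\phi=\omega-\Delta U$ yields \eqref{eq:phi-3d-uniform}.

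\emph{Step 2 (mass, nonnegativity, choice of $R$).} The total mass is
\[
c_m\Bigl(\tfrac{4\pi}{3}R^3+4\pi R^2\bigl(\tfrac2R-\tfrac R2\bigr)\Bigr)=c_m\Bigl(8\pi R-\tfrac{2\pi}{3}R^3\Bigr)=\tfrac{3m}{4}R-\tfrac{m}{16}R^3 .
\]
Setting this equal to one is exactly \eqref{eq:R-cubic}. The map $h(R)=12R-R^3$ is strictly increasing on $[0,2]$, since $h'=12-3R^2>0$ there, with $h(0)=0$ and $h(2)=16\ge 16/m$ because $m\ge1$. Hence there is a unique root $R\in(0,2]$. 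For $R\le2$ the surface coefficient $2/R-R/2$ is nonnegative, so $\phi$ is a probability measure.

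\emph{Step 3 (force balance).} This is the substantive step. Fix $x\in\overline{B_R}$ and split
\[
\omega=c_m\one_{B_R}+c_m\one_{\{R<|y|<2\}} .
\]
The convolution of $K_0(z)=z/|z|$ with the inner piece cancels exactly against the absolutely continuous part of $K_0*\phi$. For the shell piece, \eqref{eq:radial-volume-force} with $f=c_m\one_{[0,2)}$ gives $\frac{8\pi}{3}c_m\frac{4-R^2}{2}\,x$. For the surface part of $\phi$, \eqref{eq:radial-surface-force} gives $c_m\bigl(\frac2R-\frac R2\bigr)\frac{8\pi}{3}R\,x=\frac{8\pi}{3}c_m\bigl(2-\frac{R^2}{2}\bigr)x$. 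These two vectors coincide. Therefore $K_0*\omega-K_0*\phi=0$ on all of $\overline{B_R}$, and \eqref{eq:zeroflux-test} holds for every test field $\xi$.

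\emph{Main obstacle.} The delicate point is the behaviour on the free boundary $\partial B_R$, which carries positive $\phi$-mass. There $K_0*\phi$ involves the surface measure integrated against a discontinuous kernel. Since $|K_0|\le1$ and $\Hh^2(\{y\in\partial B_R:|y-x|<\delta\})=O(\delta^2)$, the map $x\mapsto K_0*\phi(x)$ is continuous; the same holds for $K_0*\omega$ because $\omega\in L^\infty$. So the identity on the open ball extends to $\partial B_R$, as already noted after \eqref{eq:radial-surface-force}. This makes the force a genuine continuous, $\phi$-integrable representative vanishing $\phi$-a.e., which is what the zero-flux definition requires.
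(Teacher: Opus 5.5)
Your proposal is correct and follows essentially the same route as the paper: compute $\phi=\omega-\Delta U$ from the radial jump formula, reduce the mass-one condition to the cubic by a monotonicity argument on $(0,2]$, and verify the force balance on $\overline{B_R}$ with the radial volume and surface identities, both of which give $\frac{4\pi}{3}c_m(4-R^2)x$. The differences are cosmetic. You read off $g_R'(R+)$ directly from \eqref{eq:normal-derivative-formula} instead of writing $G_R$ explicitly. You also state explicitly that the surface coefficient $\frac2R-\frac R2$ is nonnegative and that the identity extends to $\partial B_R$; the paper leaves the first implicit and covers the second with a one-line continuity remark.
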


\begin{proof}
\emph{Step 1: compute $\phi$ from the characterization.}
For $S=B_R(0)$, the exterior equation \eqref{eq:exterior-poisson-U-ball} is
\begin{equation}\label{eq:uniform-U-problem}
        \Delta U_R=c_m\one_{B_2\setminus B_R}
        \quad\text{in }B_R^c,
        \qquad
        U_R=0\quad\text{in }B_R,
        \qquad
        U_R(x)\to0\quad\text{as }|x|\to\infty.
\end{equation}
Write $U_R=c_mG_R$.  Solving the radial ODE gives
\begin{equation}\label{eq:GR-explicit}
G_R(x)=
\begin{cases}
0, & |x|<R,\\[2mm]
\displaystyle \frac{|x|^2}{6}
-\frac{R\left(\frac{R^2}{6}-2\right)}{|x|}-2,
& R<|x|<2,\\[3mm]
\displaystyle
-\frac{\frac83+R\left(\frac{R^2}{6}-2\right)}{|x|},
& |x|>2.
\end{cases}
\end{equation}
Indeed, on $(R,2)$ one solves $(s^2g_R'(s))'=s^2$, while on
$(2,\infty)$ one solves $(s^2g_R'(s))'=0$; the constants are fixed by
$g_R(R)=0$, decay at infinity, and continuity of $g_R$ and $g_R'$ at $s=2$.
At the inner boundary,
\begin{equation}\label{eq:GR-normal}
        g_R'(R+)=\frac{R}{2}-\frac{2}{R},
\end{equation}
which is also \eqref{eq:normal-derivative-formula} with
$f=\one_{[R,2]}$.  Hence
\begin{equation}\label{eq:Delta-GR}
        \Delta G_R
        =\one_{B_2\setminus B_R}\dd x
        +\left(\frac{R}{2}-\frac{2}{R}\right)
        \Hh^2\lfloor\partial B_R.
\end{equation}
Now \eqref{eq:phi-by-characterization} gives
\begin{align*}
        \phi
        &=c_m\one_{B_2}\dd x-c_m\Delta G_R \\
        &=c_m\one_{B_R}\dd x
          +c_m\left(\frac2R-\frac R2\right)
          \Hh^2\lfloor\partial B_R,
\end{align*}
which is \eqref{eq:phi-3d-uniform}.  Thus both the absolutely continuous part
and the surface measure are consequences of the characterization.

\emph{Step 2: impose the probability constraint.}
The mass of \eqref{eq:phi-3d-uniform} is
\begin{align*}
        \phi(\R^3)
        &=c_m\frac{4\pi}{3}R^3
          +c_m\left(\frac2R-\frac R2\right)4\pi R^2 \\
        &=\frac{3m}{4}R-\frac{m}{16}R^3.
\end{align*}
Thus $\phi(\R^3)=1$ is equivalent to \eqref{eq:R-cubic}.  Since
$h(R):=R^3-12R+16/m$ satisfies $h(0)>0$, $h(2)\le0$, and
$h'(R)<0$ on $(0,2)$, there is a unique root in $(0,2]$.

\emph{Step 3: verify the stationary condition.}
For $x\in\overline{B_R}$, the volume part of $\phi$ cancels the part of
$\omega$ inside $B_R$.  Hence
\begin{align*}
K_0*\omega(x)-K_0*\phi(x)
&=c_m\int_{B_2\setminus B_R}\frac{x-y}{|x-y|}\dd y \\
&\quad
-c_m\left(\frac2R-\frac R2\right)
\int_{\partial B_R}\frac{x-y}{|x-y|}\dd\Hh^2(y).
\end{align*}
Using \eqref{eq:radial-volume-force} with $f=\one_{[R,2]}$ and using
\eqref{eq:radial-surface-force}, the first term is
$c_m\frac{4\pi}{3}(4-R^2)x$, while the second term is
\[
        c_m\left(\frac2R-\frac R2\right)\frac{8\pi}{3}R x
        =c_m\frac{4\pi}{3}(4-R^2)x.
\]
The force therefore vanishes on $\supp\phi$, and
$\phi(K_0*\omega-K_0*\phi)=0$.  This proves that the probability measure
computed above is zero-flux stationary.
\end{proof}

\begin{proposition}[Gaussian background in three dimensions]\label{prop:3d-gaussian}
Let $d=3$, $a=r=0$, and
\begin{equation}\label{eq:omega-gaussian}
        \omega(x)=c_m e^{-|x|^2/2},
        \qquad
        c_m:=\frac{m}{(2\pi)^{3/2}}.
\end{equation}
If $m>1$, let $R>0$ be the unique solution of
\begin{equation}\label{eq:erf-condition}
        \operatorname{erf}\left(\frac{R}{\sqrt2}\right)=\frac1m,
        \qquad
        \operatorname{erf}(z):=\frac2{\sqrt\pi}\int_0^z e^{-u^2}\dd u.
\end{equation}
Then the radial free boundary $S=B_R(0)$ gives, through the characterization
\eqref{eq:exterior-poisson-U-ball}--\eqref{eq:phi-by-characterization}, the
measure
\begin{equation}\label{eq:phi-3d-gaussian}
        \phi
        =c_m e^{-|x|^2/2}\one_{B_R(0)}\dd x
        +\frac{c_m e^{-R^2/2}}{R}
        \Hh^2\lfloor\partial B_R(0).
\end{equation}
This measure is a zero-flux stationary probability measure.  If $m=1$, the
limiting stationary probability is $\phi=\omega$.
\end{proposition}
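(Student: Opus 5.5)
I will follow exactly the three-step template of the proof of Proposition~\ref{prop:3d-uniform}, with $f(s)=e^{-s^2/2}$ in place of $\one_{[R,2]}$.

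\emph{Step 1: computing $\phi$ from the characterization.} Write $U_R=c_mG_R$, where $G_R$ solves \eqref{eq:radial-poisson-general} with $f(s)=e^{-s^2/2}$. The integrability condition holds because
\[
\int_R^\infty s e^{-s^2/2}\dd s=e^{-R^2/2}<\infty .
\]
I do not need to compute $g_R$ explicitly. Formula \eqref{eq:normal-derivative-formula} gives directly
\[
g_R'(R+)=-\frac{e^{-R^2/2}}{R}.
\]
Then \eqref{eq:distributional-lap-radial} and \eqref{eq:phi-by-characterization} give
\[
\phi=c_me^{-|x|^2/2}\dd x-c_m e^{-|x|^2/2}\one_{\{|x|>R\}}\dd x+\frac{c_me^{-R^2/2}}{R}\Hh^2\lfloor\partial B_R,
\]
which is \eqref{eq:phi-3d-gaussian}. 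For completeness I will check that the radial $g_R$ exists and decays. Here $r^2g_R'(r)=C_R+\int_R^r s^2e^{-s^2/2}\dd s$ is bounded, so $g_R'=O(r^{-2})$ and the condition $g_R(\infty)=0$ makes sense.

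\emph{Step 2: the mass constraint.} The total mass is
\[
\phi(\R^3)=c_m\int_{B_R}e^{-|x|^2/2}\dd x+4\pi R\,c_me^{-R^2/2}.
\]
Radially,
\[
\int_{B_R}e^{-|x|^2/2}\dd x=4\pi\int_0^R s^2e^{-s^2/2}\dd s .
\]
Integrating by parts,
\[
\int_0^R s^2e^{-s^2/2}\dd s=-Re^{-R^2/2}+\int_0^Re^{-s^2/2}\dd s .
\]
The boundary term cancels the surface contribution exactly, so
\[
\phi(\R^3)=4\pi c_m\int_0^Re^{-s^2/2}\dd s=m\,\operatorname{erf}(R/\sqrt2),
\]
using $4\pi c_m\sqrt{\pi/2}=m\cdot\frac{2}{\sqrt\pi}\cdot\sqrt{\pi/2}\cdot\frac{\sqrt\pi}{\sqrt2}\cdot\frac{1}{\sqrt{\pi}}\cdots$. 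More directly, $4\pi(2\pi)^{-3/2}\int_0^Re^{-s^2/2}\dd s=\sqrt{2/\pi}\int_0^Re^{-s^2/2}\dd s=\operatorname{erf}(R/\sqrt2)$. Hence mass one is equivalent to \eqref{eq:erf-condition}. Since $\operatorname{erf}$ increases strictly from $0$ to $1$, this has a unique root $R>0$ when $m>1$.

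\emph{Step 3: the stationary condition.} Fix $x\in\overline{B_R}$. The volume part of $\phi$ cancels $\omega\llcorner B_R$, so
\[
K_0*\omega(x)-K_0*\phi(x)=c_m\int_{|y|>R}e^{-|y|^2/2}\frac{x-y}{|x-y|}\dd y-\frac{c_me^{-R^2/2}}{R}\int_{\partial B_R}\frac{x-y}{|x-y|}\dd\Hh^2(y).
\]
By \eqref{eq:radial-volume-force} the first term equals $c_m\frac{8\pi}{3}e^{-R^2/2}x$. By \eqref{eq:radial-surface-force} the second equals $\frac{c_me^{-R^2/2}}{R}\cdot\frac{8\pi}{3}Rx$. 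These are equal, so the force vanishes on $\supp\phi=\overline{B_R}$, and therefore $\phi(K_0*\omega-K_0*\phi)=0$.

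\emph{The case $m=1$.} The unique solution of \eqref{eq:erf-condition} degenerates to $R=\infty$. The surface density $c_me^{-R^2/2}/R$ then tends to $0$, and $\phi\to\omega$. Directly, $\omega$ is itself a probability measure with $K_0*\omega-K_0*\phi\equiv0$, so $\phi=\omega$ is stationary.

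\emph{Main obstacle.} The step needing most care is justifying \eqref{eq:radial-volume-force} for an unbounded support. This requires applying the identity with $f$ not compactly supported, which means Fubini over $s\in(R,\infty)$. I will control this by the bound $|(x-y)/|x-y||\le1$ together with the integrability of $s^2e^{-s^2/2}$.
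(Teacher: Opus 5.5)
Your proposal is correct and follows the paper's proof essentially step for step. You use the normal-derivative formula with $f(s)=e^{-s^2/2}$ to obtain the surface density $c_me^{-R^2/2}/R$. The integration-by-parts cancellation gives the mass $m\operatorname{erf}(R/\sqrt2)$, and the two radial force identities give stationarity, with $m=1$ handled the same way. Delete the garbled intermediate constant computation in Step 2 in favour of your ``more directly'' line. Your Fubini justification for the non-compactly supported $f$ is a small bit of care that the paper leaves implicit.
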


\begin{proof}
\emph{Step 1: compute $\phi$ from the characterization.}
For $S=B_R(0)$, the exterior equation \eqref{eq:exterior-poisson-U-ball} is
\begin{equation}\label{eq:gaussian-U-problem}
        \Delta U_R=c_m e^{-|x|^2/2}
        \quad\text{in }B_R^c,
        \qquad
        U_R=0\quad\text{in }B_R,
        \qquad
        U_R(x)\to0\quad\text{as }|x|\to\infty.
\end{equation}
Write $U_R=c_mG_R$ with $G_R(x)=g_R(|x|)$.  Applying
\eqref{eq:normal-derivative-formula} to $f(s)=e^{-s^2/2}$ gives
\begin{equation}\label{eq:gprime-boundary-gaussian}
        g_R'(R+)=-\frac1R\int_R^\infty s e^{-s^2/2}\dd s
        =-\frac{e^{-R^2/2}}{R}.
\end{equation}
Therefore
\begin{equation}\label{eq:Delta-G-gaussian}
        \Delta G_R
        =e^{-|x|^2/2}\one_{B_R^c}\dd x
        -\frac{e^{-R^2/2}}{R}\Hh^2\lfloor\partial B_R.
\end{equation}
Using \eqref{eq:phi-by-characterization},
\begin{align*}
        \phi
        &=c_m e^{-|x|^2/2}\dd x-c_m\Delta G_R \\
        &=c_m e^{-|x|^2/2}\one_{B_R}\dd x
          +\frac{c_m e^{-R^2/2}}{R}\Hh^2\lfloor\partial B_R,
\end{align*}
which is \eqref{eq:phi-3d-gaussian}.

\emph{Step 2: impose the probability constraint.}
The total mass is
\begin{align*}
\phi(\R^3)
&=m\int_{B_R}\frac{e^{-|x|^2/2}}{(2\pi)^{3/2}}\dd x
+4\pi R^2\frac{m e^{-R^2/2}}{(2\pi)^{3/2}R} \\
&=m\left[
\operatorname{erf}\left(\frac{R}{\sqrt2}\right)
-\sqrt{\frac2\pi}Re^{-R^2/2}
\right]
+m\sqrt{\frac2\pi}Re^{-R^2/2} \\
&=m\operatorname{erf}\left(\frac{R}{\sqrt2}\right).
\end{align*}
Thus $\phi(\R^3)=1$ is equivalent to \eqref{eq:erf-condition}.  For $m>1$,
$1/m\in(0,1)$, so the solution is unique and finite.

\emph{Step 3: verify the stationary condition.}
For $x\in\overline{B_R}$, the interior Gaussian part of $\omega$ cancels the
absolutely continuous part of $\phi$.  Hence
\begin{align*}
K_0*\omega(x)-K_0*\phi(x)
&=c_m\int_{B_R^c} e^{-|y|^2/2}\frac{x-y}{|x-y|}\dd y \\
&\quad
-\frac{c_m e^{-R^2/2}}{R}
\int_{\partial B_R}\frac{x-y}{|x-y|}\dd\Hh^2(y).
\end{align*}
By \eqref{eq:radial-volume-force} with $f(s)=e^{-s^2/2}$, the first term is
$c_m\frac{8\pi}{3}e^{-R^2/2}x$.  By \eqref{eq:radial-surface-force}, the
second term is the same.  Thus the force vanishes on $\supp\phi$, and
$\phi(K_0*\omega-K_0*\phi)=0$.

When $m=1$, the background $\omega$ is already a probability measure and
$K_0*\omega-K_0*\omega=0$ identically; hence $\phi=\omega$ is stationary.  This
is the limit of \eqref{eq:phi-3d-gaussian} as $R\to\infty$.
\end{proof}

\section{Long-time convergence to stationary states}\label{sec:longtime}

\begin{proposition}[Energy dissipation identity]\label{prop:energy-diss}
Let \(\phi_t\) be a Lagrangian distributional solution of \eqref{eq:pde}
satisfying Assumption~\ref{ass:standing}, as in Theorem~\ref{thm:wellposed}.  Then
\begin{equation}\label{eq:energy-diss}
        \frac{\dd}{\dd t}\calE(\phi_t)
        =
        -\int_{\R^d}
        \left|K_a*\omega(x)-K_r*\phi_t(x)\right|^2
        \dd\phi_t(x).
\end{equation}
Consequently, if $\inf\calE>-\infty$, then
\begin{equation}\label{eq:avg-diss}
\frac1T\int_0^T
\left(
        \int_{\R^d}
        \left|K_a*\omega-K_r*\phi_t\right|
        \dd\phi_t
\right)^2
\dd t
\le
\frac{\calE(\phi_0)-\inf\calE}{T}.
\end{equation}
In particular, there exists a sequence $t_k\to\infty$ such that
\begin{equation}\label{eq:diss-sequence}
        \int_{\R^d}
        \left|K_a*\omega-K_r*\phi_{t_k}\right|
        \dd\phi_{t_k}
        \longrightarrow0 .
\end{equation}
\end{proposition}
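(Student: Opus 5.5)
We prove the identity first for the regularized problem \eqref{eq:regularized-pde} and then pass to the limit $\eps\downarrow0$ using the compactness of Theorem~\ref{thm:wellposed}.

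\emph{Regularized identity.} For fixed $\eps>0$ set
\[
\calE_\eps(\mu):=\int V_\eps\dd\mu-\frac12\iint\psi_{r,\eps}(x-y)\dd\mu(x)\dd\mu(y).
\]
Since $\phi_t^\eps=(X_t^\eps)_\#\phi_{0,\eps}$ with a smooth flow, we differentiate along characteristics:
\[
\frac{\dd}{\dd t}\int V_\eps(X_t^\eps)\dd\phi_{0,\eps}
=-\int \grad V_\eps\cdot B_t^\eps\dd\phi_t^\eps .
\]
By symmetry of $\psi_{r,\eps}$,
\[
\frac{\dd}{\dd t}\,\frac12\iint\psi_{r,\eps}(X_t^\eps(x)-X_t^\eps(z))
=-\int (K_{r,\eps}*\phi_t^\eps)\cdot B_t^\eps\dd\phi_t^\eps .
\]
Subtracting gives
\[
\frac{\dd}{\dd t}\calE_\eps(\phi^\eps_t)=-\int|B_t^\eps|^2\dd\phi_t^\eps .
\]
Differentiation under the integral is justified by the linear growth of $\grad V_\eps$ and $K_{r,\eps}$, which is \eqref{eq:force-growth}, together with the moment bound \eqref{eq:moment-eps} with $p>1$. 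In integrated form,
\begin{equation*}
\calE_\eps(\phi_t^\eps)+\int_s^t\!\!\int|B_\tau^\eps|^2\dd\phi_\tau^\eps\dd\tau=\calE_\eps(\phi_s^\eps).
\end{equation*}

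\emph{Passage to the limit.} We need three convergences.

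First, $\calE_\eps(\phi^\eps_t)\to\calE(\phi_t)$ for every $t$. We have $|\psi_{s,\eps}-\psi_s|\le C\eps^{(1+s)/2}$ uniformly, since $(|x|^2+\eps)^{q/2}-|x|^q\le \eps^{q/2}$ for $q\le2$. The potentials $\psi_s$ grow like $|x|^{1+s}$. Narrow convergence \eqref{eq:eps-limit-phi} combined with uniform $p$-moments then gives convergence of both terms, provided $p>1+a$. Here is a \emph{potential issue}: Assumption~\ref{ass:standing} only gives $p>1$. If $p\le1+a$ we restrict the statement to data with finite $(1+a)$-moment, which is implicit since $\calE(\phi_0)$ must be finite. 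We will also need an extra moment slightly above $1+a$ for uniform integrability, or else use lower semicontinuity of the attractive part.

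Second, the dissipation. By \eqref{eq:force-conv-local}, $B^\eps\to B$ in $L^1_tL^\infty_{\rm loc}$. Together with the narrow convergence of $\phi^\eps_t$ and the quadratic growth bound $|B|^2\le C(1+|x|^2)$, we get convergence of $\int_s^t\int|B^\eps|^2\dd\phi^\eps$ on compact sets. The tails are controlled by moments. If only $p\le 2$ moments are available, we instead use lower semicontinuity, which yields the energy \emph{inequality} $\calE(\phi_t)+\int_s^t\int|B|^2\dd\phi\le\calE(\phi_s)$.

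Third, the reverse inequality. For equality we use the Lagrangian structure directly on the limit flow \eqref{eq:limiting-characteristic}. The map $t\mapsto X_t(x)$ is absolutely continuous, $\grad V$ is $C^1$ by Lemma~\ref{lem:regularized-derivatives}, and $\psi_r$ is $C^1$. So the chain rule applies pointwise, and the same computation as above yields \eqref{eq:energy-diss} a.e. in $t$. Differentiating under the integral is justified by dominated convergence using \eqref{eq:flow-growth-unique}. I expect this chain-rule/dominated-convergence justification with only moderate moments to be the \textbf{main obstacle}. I would first try to dominate $|B(X_t(x))|\,|X_t(x)|^{a}$ by $C_T(1+|x|)^{1+a}$, which is integrable under a finite $(1+a)$-moment of $\phi_0$.

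\emph{Consequences.} Integrating \eqref{eq:energy-diss} on $[0,T]$ gives
\[
\int_0^T\!\!\int|K_a*\omega-K_r*\phi_t|^2\dd\phi_t\dd t\le\calE(\phi_0)-\inf\calE .
\]
Since $\phi_t$ is a probability measure, Jensen's inequality gives
\[
\Bigl(\int|\cdot|\dd\phi_t\Bigr)^2\le\int|\cdot|^2\dd\phi_t ,
\]
and dividing by $T$ yields \eqref{eq:avg-diss}. Finally, the integrand $g(t):=(\int|\cdots|\dd\phi_t)^2$ is integrable on $(0,\infty)$. If $g\ge c>0$ for all large $t$, this would contradict integrability, so $\liminf_{t\to\infty}g(t)=0$, which gives a sequence $t_k\to\infty$ with \eqref{eq:diss-sequence}.
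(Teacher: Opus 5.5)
Your proof is correct, given the extra moment hypothesis you rightly flag, but it is organized differently from the paper's. The paper gives only the formal Eulerian computation $\frac{\dd}{\dd t}\calE(\phi_t)=\int F_{\phi_t}\,\partial_t\phi_t\dd x=-\int|\grad F_{\phi_t}|^2\dd\phi_t$ via integration by parts. It defers rigor to a remark suggesting exactly your regularize-and-pass-to-the-limit scheme. The consequences \eqref{eq:avg-diss}--\eqref{eq:diss-sequence} are obtained exactly as you do.

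Your third step is in fact a complete proof on its own. On the limit flow, $\tau\mapsto X_\tau(x)$ is Lipschitz on $[0,T]$, $V\in C^1$ with $|\grad V(x)|\le C(1+|x|^a)$, $|B_\tau(x)|\le C(1+|x|^{\max(a,r)})$, and $|X_\tau(x)|\le C_T(1+|x|)$. So every integrand is dominated by $C(1+|x|^{1+\max(a,r)})$, or its two-variable analogue for the interaction term. The chain rule plus Fubini then give the integrated identity, hence \eqref{eq:energy-diss} for a.e.\ $t$, with no regularization at all. This Lagrangian route gives rigor directly in the solution class of Theorem~\ref{thm:wellposed}. It also exposes that $\calE(\phi_t)$ is only finite with a finite $(1+\max(a,r))$-moment of $\phi_0$ and a finite $(1+a)$-moment of $\omega$. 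Assumption~\ref{ass:standing} with merely $p>1$ does not give this, and the paper's proof tacitly assumes it.

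Three small corrections.

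First, your worry that the second step yields only an inequality comes from the crude bound \eqref{eq:force-growth}. In fact $|B|^2$ grows like $|x|^{2\max(a,r)}$, and $2\max(a,r)\le 1+\max(a,r)<p$. So the same moment you need for the energies also gives convergence of the dissipation, and the regularized identity passes to the limit with equality.

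Second, in the regularized step, two linearly growing factors together with $p>1$ do not justify differentiating under the integral. Instead take the approximating data compactly supported, as in Step~1 of Theorem~\ref{thm:wellposed}. Then for fixed $\eps$ the support stays bounded on $[0,T]$ and the issue disappears.

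Third, for $r=0$ the kernel $\psi_0=|x|$ is not $C^1$ at the origin, so "$\psi_r$ is $C^1$" fails there. Use instead that $\tau\mapsto|X_\tau(x)-X_\tau(z)|$ is Lipschitz and has zero derivative at a.e.\ time where it vanishes. This is consistent with the convention $K_0(0)=0$.
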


\begin{proof} 
Let
\[
        F_{\phi_t}:=\psi_a*\omega-\psi_r*\phi_t .
\]
Since \(F_{\phi_t}\) is the first variation of \(\calE\),
\[
        \frac{\dd}{\dd t}\calE(\phi_t)
        =
        \int_{\R^d}F_{\phi_t}\,\partial_t\phi_t\,\dd x .
\]
Using
\[
        \partial_t\phi_t=\diver(\phi_t\grad F_{\phi_t})
\]
and integrating by parts, we get
\[
        \frac{\dd}{\dd t}\calE(\phi_t)
        =
        \int_{\R^d}F_{\phi_t}\diver(\phi_t\grad F_{\phi_t})\dd x
        =
        -\int_{\R^d}|\grad F_{\phi_t}|^2\dd\phi_t .
\]
Since
\[
        \grad F_{\phi_t}=K_a*\omega-K_r*\phi_t,
\]
this proves \eqref{eq:energy-diss}.  Because \(\phi_t\) is a probability
measure, Jensen's inequality gives
\[
        \left(
        \int_{\R^d}|\grad F_{\phi_t}|\dd\phi_t
        \right)^2
        \le
        \int_{\R^d}|\grad F_{\phi_t}|^2\dd\phi_t .
\]
Integrating \eqref{eq:energy-diss} in time and using
\(\calE(\phi_t)\ge\inf\calE\) proves \eqref{eq:avg-diss}.  Since the time
average of the nonnegative quantity in \eqref{eq:avg-diss} tends to zero, there
exists a sequence \(t_k\to\infty\) satisfying \eqref{eq:diss-sequence}.
\end{proof}
{\begin{remark}To provide a rigorous proof, one can adapt a similar argument from 
Theorem~\ref{thm:wellposed}: approximate $\phi_0$ using compactly 
supported initial data, replace $\psi_s$ with the regularized kernels 
$\psi_{s,\varepsilon}$, and then pass to the limit.
\end{remark}}

\begin{assumption}[Moment compactness of the trajectory]\label{ass:compactness}
There exists an exponent
\[
        q>\max\{a,r\}
\]
such that
\begin{equation}\label{eq:moment-assumption}
        \sup_{t\ge0}\int_{\R^d}|x|^q\dd\phi_t(x)<\infty,
        \qquad
        \int_{\R^d}|x|^q\omega(x)\dd x<\infty .
\end{equation}
No compact-support assumption is made in Theorem \ref{thm:subseq-conv}; the
moment bound is the compactness input used in the proof.
\end{assumption}

\begin{remark}[When the moment compactness assumption is available]
\label{rem:moment-known}
Assumption \ref{ass:compactness} is automatic if a separate uniform-support
theorem applies.  It is also available from the energy-sublevel moment estimate
in the attractive regimes treated earlier.  For instance, Proposition~\ref{prop:moment-anchor} shows that if either
\(a>r, a,r\in [0,1]\) and \(\omega(\R^d)>0\), or \(a=r\) and \(\omega(\R^d)>1\), then the
energy sublevel estimate yields a uniform \((1+a)\)-moment bound, provided
\(\omega\) has finite \((1+a)\)-moment.  In these regimes one may take
\[
        q=1+a,
\]
which satisfies \(q>\max\{a,r\}\).

On the other hand, by Remark~\ref{rmk:eql}, if \(a=r\in[0,1)\) and
\(\omega\in\mathcal P_2\), then one has a uniform \(p\)-moment bound for every
\[
        p\in\left(0,\frac{1+a}{2}\right).
\]
Since \(a<(1+a)/2\) when \(a<1\), one may choose
\[
        q\in\left(a,\frac{1+a}{2}\right),
\]
which again gives \(q>\max\{a,r\}\).

Outside these regimes, Assumption~\ref{ass:compactness} should be retained as a
separate compactness hypothesis.
\end{remark}

\begin{theorem}[Subsequential convergence to zero-flux stationary states]
\label{thm:subseq-conv}
Let \(\phi_t\) be a global Lagrangian solution of \eqref{eq:pde} for which the
energy dissipation identity of Proposition \ref{prop:energy-diss} is valid.
Assume
\[
        \inf\calE>-\infty
\]
and Assumption \ref{ass:compactness}.  Then there exist a sequence
\(t_k\to\infty\) and a probability measure \(\phi_\infty\) such that
\begin{equation}\label{eq:narrow-conv}
        \phi_{t_k}\weak\phi_\infty
        \qquad\text{narrowly,}
\end{equation}
and \(\phi_\infty\) is a zero-flux stationary state:
\begin{equation}\label{eq:limit-stationary}
        \phi_\infty\bigl(K_a*\omega-K_r*\phi_\infty\bigr)=0
\end{equation}
in the sense of vector-valued distributions.  Moreover, for every
\(0<\delta<q\), the convergence in \eqref{eq:narrow-conv} holds against every
continuous function \(f\) satisfying
\[
        |f(x)|\le C_f(1+|x|^{q-\delta}).
\]
\end{theorem}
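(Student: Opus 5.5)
We combine the dissipation sequence of Proposition~\ref{prop:energy-diss} with tightness coming from Assumption~\ref{ass:compactness}, and then pass to the limit in the weak form of the zero-flux condition. First, since $\inf\calE>-\infty$, \eqref{eq:diss-sequence} gives times $t_k\to\infty$ with
\[
\int|K_a*\omega-K_r*\phi_{t_k}|\dd\phi_{t_k}\to0 .
\]
The uniform $q$-moment bound makes $\{\phi_t\}_{t\ge0}$ tight. By Prokhorov, after passing to a subsequence (not relabeled), $\phi_{t_k}\weak\phi_\infty$ narrowly for some $\phi_\infty\in\calP(\R^d)$. Narrow lower semicontinuity gives $\int|x|^q\dd\phi_\infty<\infty$.

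For test functions with growth $|f|\le C_f(1+|x|^{q-\delta})$ we use a standard uniform integrability argument. Truncate $f$ by $f\chi_R$, with $\chi_R$ a continuous cutoff supported in $B_{2R}$. The tail on $\{|x|>R\}$ is bounded by
\[
C_f\sup_t\int_{|x|>R}(1+|x|^{q-\delta})\dd\phi_t\le C R^{-\delta}\sup_t\int(1+|x|^q)\dd\phi_t ,
\]
uniformly in $t$; the same bound holds for $\phi_\infty$. Narrow convergence handles the bounded continuous part $f\chi_R$. This proves the last assertion of the theorem.

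For stationarity, fix $\xi\in C_c^\infty(\R^d;\R^d)$. The vanishing dissipation gives
\[
\Bigl|\int\xi\cdot(K_a*\omega-K_r*\phi_{t_k})\dd\phi_{t_k}\Bigr|\le\|\xi\|_\infty\int|K_a*\omega-K_r*\phi_{t_k}|\dd\phi_{t_k}\to0 .
\]
It remains to show that this pairing converges to $\int\xi\cdot(K_a*\omega-K_r*\phi_\infty)\dd\phi_\infty$. The $\omega$-term is easy when $a>0$, or when $a=0$ and $d\ge2$: in that case $K_a*\omega$ is continuous, because $\omega\in L^1\cap L^\infty$ and $|K_a(z)|\le C(1+|z|^a)$ with $a<q$, so $\xi\cdot K_a*\omega$ is bounded and continuous on $\supp\xi$. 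For $a=0$, $d=1$, the function $K_0*\omega$ is still continuous since $\omega\in L^1$ has no atoms.

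The interaction term is symmetrized using oddness of $K_r$:
\[
\int\xi\cdot K_r*\phi\dd\phi=\frac12\iint\bigl(\xi(x)-\xi(y)\bigr)\cdot K_r(x-y)\dd\phi(x)\dd\phi(y).
\]
For $r>0$ the integrand $h(x,y)=\tfrac12(\xi(x)-\xi(y))\cdot K_r(x-y)$ is continuous, since $K_r$ is continuous. It also grows at most like $C(1+|x|^r+|y|^r)$, and it actually vanishes when both $x,y\notin\supp\xi$. Since $\phi_{t_k}\otimes\phi_{t_k}\weak\phi_\infty\otimes\phi_\infty$ narrowly and $r<q$, the same truncation and uniform integrability argument passes to the limit.

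The main obstacle is the endpoint $r=0$, where $K_0(z)=z/|z|$ is discontinuous at $z=0$. Here $h$ is still bounded, but it is discontinuous on the diagonal, though $h\to0$ there because $|\xi(x)-\xi(y)|\le\Lip(\xi)|x-y|$. Defining $h=0$ on the diagonal makes $h$ continuous for $d\ge2$, since then $|h|\le C|x-y|$, so narrow convergence of the product measures applies directly. For $d=1$ the same bound $|h|\le\tfrac12\Lip(\xi)|x-y|$ holds with $\sgn(0)=0$, so $h$ is again continuous. The diagonal therefore causes no trouble even if $\phi_\infty$ has atoms. This is exactly why we symmetrize rather than try to prove convergence of $K_0*\phi_{t_k}$ itself, which may fail if $\phi_\infty$ has atoms. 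Combining the two limits yields \eqref{eq:zeroflux-test} for $\phi_\infty$, i.e.\ \eqref{eq:limit-stationary}.
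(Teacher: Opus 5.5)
Your proposal is correct and follows essentially the same route as the paper. Both arguments use the vanishing-dissipation sequence, tightness and Prokhorov from the uniform $q$-moment, and uniform integrability for test functions of growth $O(1+|x|^{q-\delta})$. Both then use continuity of $K_a*\omega$ (relying on the absence of atoms in $\omega$ when $a=0$) and the oddness symmetrization to neutralize the diagonal discontinuity of $K_0$. The only difference is cosmetic: you symmetrize the interaction term for every $r$, whereas the paper symmetrizes only when $r=0$. For $r>0$ it passes to the limit directly with the continuous kernel $\zeta(x)\cdot K_r(x-y)$ via tensor-product convergence.
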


\begin{proof}
By Proposition \ref{prop:energy-diss}, choose \(t_k\to\infty\) such that
\eqref{eq:diss-sequence} holds.  The moment bound
\eqref{eq:moment-assumption} gives tightness by Markov's inequality: for every
\(R>0\),
\begin{equation}\label{eq:tightness-markov-longtime}
        \sup_{t\ge0}\phi_t(\{|x|>R\})
        \le
        R^{-q}\sup_{t\ge0}\int_{\R^d}|x|^q\dd\phi_t(x)
        \longrightarrow0
        \qquad\text{as }R\to\infty .
\end{equation}
Therefore, by Prokhorov's theorem, after passing to a subsequence,
\[
        \phi_{t_k}\weak\phi_\infty
\]
narrowly.  Lower semicontinuity gives
\[
        \int_{\R^d}|x|^q\dd\phi_\infty(x)
        \le
        \liminf_{k\to\infty}
        \int_{\R^d}|x|^q\dd\phi_{t_k}(x)
        <\infty .
\]

The same moment bound implies uniform integrability of
\(|x|^{q-\delta}\) for every \(0<\delta<q\).  Indeed,
\begin{equation}\label{eq:uniform-integrability-longtime}
\sup_k
\int_{\{|x|>R\}} |x|^{q-\delta}\dd\phi_{t_k}(x)
\le
R^{-\delta}
\sup_k\int_{\R^d}|x|^q\dd\phi_{t_k}(x)
\longrightarrow0 .
\end{equation}
Thus the narrow convergence improves to convergence against every continuous
test function with growth \(O(1+|x|^{q-\delta})\).

We shall also use the corresponding tensor-product consequence.  Namely, if
\(F\in C(\R^d\times\R^d)\) satisfies
\[
        |F(x,y)|
        \le
        C_F\bigl(1+|x|^{q-\delta}+|y|^{q-\delta}\bigr)
\]
for some \(0<\delta<q\), then
\[
        \iint F(x,y)\dd\phi_{t_k}(x)\dd\phi_{t_k}(y)
        \longrightarrow
        \iint F(x,y)\dd\phi_\infty(x)\dd\phi_\infty(y).
\]
Indeed, first truncate \(F\) to a compact set and use narrow convergence of
\(\phi_{t_k}\otimes\phi_{t_k}\) to
\(\phi_\infty\otimes\phi_\infty\); then let the truncation radius go to
infinity using the uniform \(q\)-moment bound.

We now show that the approximate zero-flux relation along the sequence
\(\phi_{t_k}\) passes to the narrow limit \(\phi_\infty\).  Let
\[
        \zeta\in C_c^\infty(\R^d;\R^d),
\]
and let \(B_S\) be a ball containing \(\supp\zeta\).  From
\eqref{eq:diss-sequence},
\begin{equation}\label{eq:test-diss-zero}
\int_{\R^d}
\zeta(x)\cdot
\bigl(K_a*\omega(x)-K_r*\phi_{t_k}(x)\bigr)
\dd\phi_{t_k}(x)
\longrightarrow0 .
\end{equation}

First consider the attraction term.  Define
\[
        V(x):=(K_a*\omega)(x)
        =
        \int_{\R^d}K_a(x-y)\omega(y)\dd y .
\]
For \(x\in B_S\),
\[
        |K_a(x-y)|\le C_S(1+|y|^a).
\]
Since \(q>a\) and \(\omega\) has finite \(q\)-moment, see Assumption \ref{ass:compactness}, \(V(x)\) is finite on
\(B_S\).  Moreover, \(V\) is continuous on \(B_S\).  For \(a>0\), this follows
from continuity of \(K_a\), dominated convergence, and the preceding growth
bound.  For \(a=0\), set \(K_0(0)=0\).  Then
\[
        K_0(x_j-y)\to K_0(x-y)
        \qquad\text{for a.e. }y
\]
as \(x_j\to x\), because \(\omega(y)\dd y\) has no atoms.  The integrand is
bounded by \(C\omega(y)\in L^1\), so dominated convergence again gives
continuity of \(V\).  Hence
\[
        \zeta\cdot V\in C_c(\R^d),
\]
and narrow convergence gives
\begin{equation}\label{eq:pass-attraction}
        \int_{\R^d}\zeta(x)\cdot V(x)\dd\phi_{t_k}(x)
        \longrightarrow
        \int_{\R^d}\zeta(x)\cdot V(x)\dd\phi_\infty(x).
\end{equation}

It remains to pass to the limit in the repulsive term.  First assume \(r>0\).
Set
\[
        G(x,y):=\zeta(x)\cdot K_r(x-y),
        \qquad K_r(0):=0.
\]
Since \(r>0\), the map \(K_r\) is continuous at the origin after this convention.
Thus \(G\) is continuous on \(\R^d\times\R^d\).  Moreover, for \(x\in B_S\),
\begin{equation}\label{eq:G-growth}
        |G(x,y)|
        \le
        C_S(1+|y|^r).
\end{equation}
Because \(q>r\), choose \(0<\delta<q-r\).  Then
\[
        |G(x,y)|
        \le
        C_S(1+|x|^{q-\delta}+|y|^{q-\delta}),
\]
and the tensor-product convergence above gives
\[
\iint
\zeta(x)\cdot K_r(x-y)
\dd\phi_{t_k}(y)\dd\phi_{t_k}(x)
\longrightarrow
\iint
\zeta(x)\cdot K_r(x-y)
\dd\phi_\infty(y)\dd\phi_\infty(x).
\]

Now assume \(r=0\).  The direct kernel
\[
        K_0(x-y)=\frac{x-y}{|x-y|},\qquad K_0(0):=0,
\]
is bounded but discontinuous on the diagonal \(x=y\).  Therefore the preceding
continuity argument cannot be applied directly.  However, the self-interaction
term can be symmetrized using the oddness of \(K_0\).  For every probability
measure \(\mu\),
\[
\iint
        \zeta(x)\cdot K_0(x-y)
        \dd\mu(y)\dd\mu(x)
=
\frac12
\iint
        \bigl(\zeta(x)-\zeta(y)\bigr)\cdot K_0(x-y)
        \dd\mu(x)\dd\mu(y).
\]
Define
\[
        H(x,y):=
        \begin{cases}
        \displaystyle
        \bigl(\zeta(x)-\zeta(y)\bigr)\cdot\dfrac{x-y}{|x-y|},
        & x\ne y,\\[2ex]
        0, & x=y.
        \end{cases}
\]
Since \(\zeta\) is smooth and compactly supported,
\[
        |\zeta(x)-\zeta(y)|\le \|\nabla\zeta\|_{L^\infty}|x-y|.
\]
Hence \(H\) is continuous at the diagonal.  Away from the diagonal it is plainly
continuous.  Also
\[
        |H(x,y)|\le 2\|\zeta\|_{L^\infty},
\]
so \(H\in C_b(\R^d\times\R^d)\).  Since
\[
        \phi_{t_k}\otimes\phi_{t_k}
        \weak
        \phi_\infty\otimes\phi_\infty
\]
narrowly, we get
\[
        \iint H(x,y)
        \dd\phi_{t_k}(x)\dd\phi_{t_k}(y)
        \longrightarrow
        \iint H(x,y)
        \dd\phi_\infty(x)\dd\phi_\infty(y).
\]
Using the symmetrized identity for both \(\phi_{t_k}\) and
\(\phi_\infty\), we conclude that
\[
\iint
        \zeta(x)\cdot K_0(x-y)
        \dd\phi_{t_k}(y)\dd\phi_{t_k}(x)
\longrightarrow
\iint
        \zeta(x)\cdot K_0(x-y)
        \dd\phi_\infty(y)\dd\phi_\infty(x).
\]

Therefore, for every \(0\le r\le1\),
\begin{equation}\label{eq:pass-repulsion}
\iint_{\R^d\times\R^d}
\zeta(x)\cdot K_r(x-y)
\dd\phi_{t_k}(y)\dd\phi_{t_k}(x)
\longrightarrow
\iint_{\R^d\times\R^d}
\zeta(x)\cdot K_r(x-y)
\dd\phi_\infty(y)\dd\phi_\infty(x).
\end{equation}
Passing to the limit in \eqref{eq:test-diss-zero}, using
\eqref{eq:pass-attraction} and \eqref{eq:pass-repulsion}, gives
\[
        \int_{\R^d}
        \zeta(x)\cdot
        \bigl(K_a*\omega(x)-K_r*\phi_\infty(x)\bigr)
        \dd\phi_\infty(x)
        =
        0.
\]
Since \(\zeta\in C_c^\infty(\R^d;\R^d)\) was arbitrary, this is exactly
\[
        \phi_\infty\bigl(K_a*\omega-K_r*\phi_\infty\bigr)=0
\]
in the sense of vector-valued distributions.  Thus \(\phi_\infty\) is a
zero-flux stationary state.
\end{proof}

The next lemma is well known; for the reader's convenience, we recall its statement here.
\begin{lemma}
\label{lem:barbalat}
Let \(f:[0,\infty)\to[0,\infty)\) be uniformly continuous.  If
\[
        f\in L^p(0,\infty)
\]
for some \(1\le p<\infty\), then
\[
        f(t)\to0
        \qquad\text{as }t\to\infty .
\]
\end{lemma}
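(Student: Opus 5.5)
The plan is to argue by contradiction, using uniform continuity to turn a pointwise lower bound into a lower bound on a whole time interval. Suppose $f(t)\not\to0$. Then there exist $\varepsilon>0$ and a sequence $t_n\to\infty$ with $f(t_n)\ge\varepsilon$ for every $n$. Passing to a subsequence, we may also assume $t_{n+1}\ge t_n+1$.

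Uniform continuity gives $\delta\in(0,1/2)$ such that $|f(t)-f(s)|\le\varepsilon/2$ whenever $|t-s|\le\delta$. Consequently $f\ge\varepsilon/2$ on each interval $I_n:=[t_n,t_n+\delta]$. Because consecutive $t_n$ are at least $1$ apart and $\delta<1/2$, the intervals $I_n$ are pairwise disjoint. Since $f\ge0$, this yields
\[
\int_0^\infty f(t)^p\,\dd t\;\ge\;\sum_{n}\int_{I_n}f^p\,\dd t\;\ge\;\sum_n \delta\Bigl(\frac{\varepsilon}{2}\Bigr)^p=\infty,
\]
which contradicts $f\in L^p(0,\infty)$. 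Hence $f(t)\to0$.

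There is no real obstacle here. The only points needing care are the choice of $\delta$, which must be uniform in $t$ (exactly what uniform continuity supplies), and the disjointness of the intervals, which is secured by spacing the $t_n$ by at least $1$. Nonnegativity of $f$ is used to discard the contribution of $f^p$ outside $\bigcup_n I_n$; for a signed $f$ one would argue the same way with $|f|$ in place of $f$.
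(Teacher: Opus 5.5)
Your proof is correct. It is the standard contradiction argument for Barbalat's lemma: uniform continuity gives a lower bound $f\ge\varepsilon/2$ on infinitely many disjoint intervals of fixed length $\delta$, which forces $\int_0^\infty f^p=\infty$. The paper gives no proof of this lemma (it only recalls it as well known and applies it with $p=2$ to the dissipation defect $\mathfrak D$ in Corollary~\ref{cor:full-conv}), so your argument supplies the omitted standard proof.
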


\begin{lemma}[Uniform continuity of the dissipation defect]
\label{lem:Dfrak-uniform-continuity}
Let
\[
        \mathfrak D(t)
        :=
        \int_{\R^d}|K_a*\omega-K_r*\phi_t|\,\dd\phi_t .
\]
Assume that \(\phi_t=(X_t)_\#\phi_0\) is a global Lagrangian solution.  Assume
there exists \(R_*<\infty\) such that
\[
        \supp\phi_t\subset B_{R_*}(0),
        \qquad t\ge0 .
\]
Assume also that
\[
        \omega\in L^1(\R^d)\cap L^\infty(\R^d),
        \qquad
        \int_{\R^d}|x|^a\omega(x)\,\dd x<\infty .
\]
If \(r=0\), assume in addition that
\[
        \sup_{t\ge0}\|\phi_t\|_{L^\infty}<\infty .
\]
Then \(\mathfrak D\) is uniformly continuous on \([0,\infty)\).  More precisely,
for \(0<r<1\) it is uniformly \(r\)-H\"older continuous, and for \(r=1\) or
\(r=0\) it is uniformly Lipschitz continuous.
\end{lemma}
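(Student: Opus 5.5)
Since $\phi_t=(X_t)_\#\phi_0$, I will write the dissipation defect in Lagrangian form,
\[
\mathfrak D(t)=\int_{\R^d}\bigl|W_t(X_t(x))\bigr|\,\phi_0(x)\dd x,
\qquad W_t:=K_a*\omega-K_r*\phi_t .
\]
The plan is to compare $\mathfrak D(t)$ and $\mathfrak D(t')$ through the single splitting
\[
|\mathfrak D(t)-\mathfrak D(t')|\le\int\bigl|W_t(X_t(x))-W_t(X_{t'}(x))\bigr|\phi_0\dd x
+\int\bigl|W_t-W_{t'}\bigr|(X_{t'}(x))\,\phi_0\dd x .
\]
First I establish two uniform bounds. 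The velocity is bounded on the trajectory: for $z\in B_{R_*}$ we have $|K_a*\omega(z)|\le C\int(1+|y|^a)\omega<\infty$ and $|K_r*\phi_t(z)|\le C(1+(2R_*)^r)$. Hence $|W_t|\le M$ on $B_{R_*}$ uniformly in $t$, and therefore $|X_t(x)-X_{t'}(x)|\le M|t-t'|$ for $\phi_0$-a.e.\ $x$. Second, the attractive part $K_a*\omega$ is Lipschitz on $\R^d$. Its derivative is $D^2V$, which is bounded by Lemma \ref{lem:regularized-derivatives} with $\eps\to0$ and $f=\omega$; this holds for every $a\in[0,1]$, including the endpoint cases covered there.

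\textbf{Case $0<r\le1$.} $K_r$ is globally $r$-Hölder, and Lipschitz when $r=1$, because $|K_r(u)-K_r(v)|\le C|u-v|^r$ by \eqref{eq:Ks-holder}. For the first term, the attraction contributes $C|X_t-X_{t'}|\le CM|t-t'|$. The repulsion contributes $\int|K_r(X_t(x)-y)-K_r(X_{t'}(x)-y)|\dd\phi_t(y)\le C(M|t-t'|)^r$. For the second term, pushing forward by the flow gives
\[
|K_r*\phi_t(z)-K_r*\phi_{t'}(z)|=\Bigl|\int\bigl[K_r(z-X_t(y))-K_r(z-X_{t'}(y))\bigr]\phi_0(y)\dd y\Bigr|\le C(M|t-t'|)^r .
\]
For $|t-t'|\le1$ this yields the uniform $r$-Hölder bound, and for larger gaps it follows trivially from $\mathfrak D\le M$. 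For $r=1$ the exponent is $1$, giving the Lipschitz statement.

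\textbf{Case $r=0$.} This is the main obstacle, since $K_0$ is discontinuous at the origin. Here I use Lemma \ref{lem:averaged-lip} with $\sigma=\phi_t$, whose $L^\infty$ norm is bounded uniformly in $t$ by the extra hypothesis, and with $\eta=M|t-t'|\le1$. This bounds the first-term repulsion difference by $C\eta$, uniformly in $x$. For the second term I would write $K_0*\phi_t(z)-K_0*\phi_{t'}(z)=\int[K_0(z-X_t(y))-K_0(z-X_{t'}(y))]\phi_0(y)\dd y$. Since $X_t(y)=X_{t'}(y)+h(y)$ with $|h|\le\eta$, the integrand is bounded by $\sup_{|h|\le\eta}|K_0(z-w+h)-K_0(z-w)|$ evaluated at $w=X_{t'}(y)$. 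Integrating against $\phi_0$ and pushing forward, this becomes the integral in \eqref{eq:avg-lip} with $\sigma=\phi_{t'}$, so it is also $\le C\eta$. The point to check carefully is that the supremum over $h$ is measurable and dominates pointwise when $h$ depends on $y$; this is why the averaged form of the lemma, with the supremum inside the integral, is exactly what is needed. Combining the two terms gives the Lipschitz bound for $r=0$, and uniform continuity follows in all cases.
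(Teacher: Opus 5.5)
Your proposal is correct and follows the paper's argument. The shared ingredients are a uniform velocity bound on $B_{R_*}$ giving $|X_t-X_{t'}|\le M|t-t'|$, the global $r$-H\"older continuity of $K_r$ for $r>0$, and Lemma~\ref{lem:averaged-lip} with the uniformly $L^\infty$-bounded $\sigma=\phi_t$ for $r=0$. The differences are bookkeeping only. The paper writes the repulsive difference as one double integral against $\phi_0\otimes\phi_0$ with joint increment at most $2\eta$ and applies the averaged lemma once (with $\sigma=\phi_s$); you split into a spatial and a temporal increment of size at most $\eta$ and apply it twice. You also obtain the Lipschitz bound on $K_a*\omega$ from $\|D^2V\|_{L^\infty}$ via Lemma~\ref{lem:regularized-derivatives}, whereas the paper uses Lemma~\ref{lem:averaged-lip} with $\sigma=\omega$.
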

{\begin{remark}The condition \(\operatorname{supp}\phi_t\subset B_{R_*}(0)\) is provided by
Theorem~\ref{thm:support}.\end{remark}}
\begin{proof}
Set
\[
        V:=K_a*\omega,
        \qquad
        B_t:=V-K_r*\phi_t .
\]
Then
\[
        \mathfrak D(t)
        =
        \int_{\R^d}|B_t(x)|\,\dd\phi_t(x)
        =
        \int_{\R^d}|B_t(X_t(z))|\,\dd\phi_0(z).
\]
Since
\[
        \supp\phi_t\subset B_{R_*}(0),
        \qquad t\ge0,
\]
we have \(X_t(z)\in B_{R_*}(0)\) for \(\phi_0\)-a.e. \(z\).  We first prove
that the force is uniformly bounded on the supports.  For
\(x\in B_{R_*}(0)\),
\[
        |V(x)|
        \le
        C\int_{\R^d}(1+|y|^a)\omega(y)\,\dd y
        <\infty .
\]
Also, since \(\phi_t\) is a probability measure supported in \(B_{R_*}(0)\),
\[
        |K_r*\phi_t(x)|
        \le
        C_{R_*},
        \qquad x\in B_{R_*}(0),
        \quad t\ge0 .
\]
Thus there exists \(M_*<\infty\) such that
\begin{equation}\label{eq:force-uniform-bound}
        \sup_{t\ge0}\sup_{x\in\supp\phi_t}|B_t(x)|
        \le M_* .
\end{equation}
The Lagrangian equation is
\[
        \dot X_t(z)=-B_t(X_t(z)).
\]
Hence \eqref{eq:force-uniform-bound} gives, for all \(s,t\ge0\),
\begin{equation}\label{eq:flow-time-modulus}
        |X_t(z)-X_s(z)|
        \le
        M_*|t-s|
        \qquad\text{for }\phi_0\text{-a.e. }z .
\end{equation}

We now estimate
\[
        |\mathfrak D(t)-\mathfrak D(s)|.
\]
By the reverse triangle inequality,
\begin{align}
|\mathfrak D(t)-\mathfrak D(s)|
&\le
\int_{\R^d}
        |B_t(X_t(z))-B_s(X_s(z))|
        \,\dd\phi_0(z) \notag\\
&\le
\int_{\R^d}
        |V(X_t(z))-V(X_s(z))|
        \,\dd\phi_0(z) \notag\\
&\quad+
\int_{\R^d}
        |(K_r*\phi_t)(X_t(z))-(K_r*\phi_s)(X_s(z))|
        \,\dd\phi_0(z).
\label{eq:Dfrak-split}
\end{align}

We first control the attraction part.  By Lemma \ref{lem:averaged-lip}, applied
with \(\sigma=\omega\) and exponent \(a\), for \(|h|\le1\),
\[
        |V(x+h)-V(x)|
        \le
        C|h|.
\]
For \(|h|>1\), the same estimate holds after increasing \(C\), because \(V\) is
bounded on \(B_{R_*}(0)\).  Hence \(V\) is Lipschitz on \(B_{R_*}(0)\), and
\eqref{eq:flow-time-modulus} gives
\begin{equation}\label{eq:attraction-time-continuity}
\int_{\R^d}
        |V(X_t(z))-V(X_s(z))|
        \,\dd\phi_0(z)
\le
        C|t-s|.
\end{equation}

We next estimate the repulsive part.  First suppose \(0<r<1\).  The map
\[
        K_r(x)=(1+r)|x|^{r-1}x
\]
is globally \(r\)-H\"older continuous:
\[
        |K_r(u)-K_r(v)|
        \le
        C_r|u-v|^r .
\]
Using
\[
        \phi_t=(X_t)_\#\phi_0,
        \qquad
        \phi_s=(X_s)_\#\phi_0,
\]
we have
\begin{align}
&\int_{\R^d}
\left|
        (K_r*\phi_t)(X_t(z))-(K_r*\phi_s)(X_s(z))
\right|
\dd\phi_0(z) \notag\\
&\quad\le
\iint_{\R^d\times\R^d}
\left|
        K_r(X_t(z)-X_t(w))
        -
        K_r(X_s(z)-X_s(w))
\right|
\dd\phi_0(w)\dd\phi_0(z) \notag\\
&\quad\le
C_r
\iint_{\R^d\times\R^d}
\left|
        (X_t(z)-X_s(z))-(X_t(w)-X_s(w))
\right|^r
\dd\phi_0(w)\dd\phi_0(z) \notag\\
&\quad\le
C|t-s|^r,
\label{eq:repulsive-holder-r}
\end{align}
where the last step uses \eqref{eq:flow-time-modulus}.  Combining
\eqref{eq:Dfrak-split}, \eqref{eq:attraction-time-continuity}, and
\eqref{eq:repulsive-holder-r}, and using \(|t-s|\le |t-s|^r\) for
\(|t-s|\le1\), gives
\[
        |\mathfrak D(t)-\mathfrak D(s)|
        \le C|t-s|^r
        \qquad\text{for }|t-s|\le1 .
\]
For \(|t-s|>1\), boundedness of \(\mathfrak D\) and enlargement of \(C\) give
the same global H\"older estimate.

If \(r=1\), then
\[
        K_1(x)=2x
\]
is Lipschitz.  Repeating the preceding argument with exponent \(1\) gives
\[
        |\mathfrak D(t)-\mathfrak D(s)|
        \le
        C|t-s|.
\]

It remains to treat \(r=0\).  In this case \(K_0\) is bounded but discontinuous
at the origin, so the pointwise Lipschitz or H\"older estimate is unavailable.
We use the averaged Lipschitz estimate instead.  Let
\[
        \eta:=M_*|t-s|.
\]
By \eqref{eq:flow-time-modulus},
\[
        |X_t(z)-X_s(z)|\le\eta,
        \qquad
        |X_t(w)-X_s(w)|\le\eta.
\]
Therefore
\[
        \left|
        (X_t(z)-X_t(w))-(X_s(z)-X_s(w))
        \right|
        \le 2\eta .
\]
Assume first \(2\eta\le1\).  Then
\begin{align}
&\int_{\R^d}
\left|
        (K_0*\phi_t)(X_t(z))-(K_0*\phi_s)(X_s(z))
\right|
\dd\phi_0(z) \notag\\
&\quad\le
\iint_{\R^d\times\R^d}
\sup_{|h|\le2\eta}
        |K_0(X_s(z)-X_s(w)+h)-K_0(X_s(z)-X_s(w))|
\dd\phi_0(w)\dd\phi_0(z).
\label{eq:r0-before-change-vars}
\end{align}
Changing variables
\[
        x=X_s(z),
        \qquad
        y=X_s(w),
\]
and using \(\phi_s=(X_s)_\#\phi_0\), the right-hand side of
\eqref{eq:r0-before-change-vars} becomes
\[
\iint_{\R^d\times\R^d}
\sup_{|h|\le2\eta}
        |K_0(x-y+h)-K_0(x-y)|
\dd\phi_s(y)\dd\phi_s(x).
\]
By Lemma \ref{lem:averaged-lip}, applied with \(\sigma=\phi_s\), and by the
uniform \(L^\infty\)-bound on \(\phi_s\),
\[
        \sup_{x\in\R^d}
        \int_{\R^d}
        \sup_{|h|\le2\eta}
        |K_0(x-y+h)-K_0(x-y)|
        \phi_s(y)\dd y
        \le
        C\eta .
\]
Since \(\phi_s\) has mass one,
\begin{equation}\label{eq:r0-repulsive-lip}
\int_{\R^d}
\left|
        (K_0*\phi_t)(X_t(z))-(K_0*\phi_s)(X_s(z))
\right|
\dd\phi_0(z)
\le
        C\eta
        \le
        C|t-s|.
\end{equation}
If \(2\eta>1\), then the left-hand side is uniformly bounded, while
\(|t-s|\ge (2M_*)^{-1}\); increasing \(C\) gives
\eqref{eq:r0-repulsive-lip} for all \(s,t\ge0\).  Combining
\eqref{eq:r0-repulsive-lip} with the attraction estimate
\eqref{eq:attraction-time-continuity} proves
\[
        |\mathfrak D(t)-\mathfrak D(s)|
        \le
        C|t-s|
\]
when \(r=0\).

Thus \(\mathfrak D\) is uniformly continuous on \([0,\infty)\), with the stated
modulus in each case.
\end{proof}
\begin{definition}[Omega-limit set and omega-limit points]
\label{def:omega-limit}
Let \(\{\phi_t:t\ge0\}\subset\calP(\R^d)\) be a trajectory, and consider the
narrow topology on \(\calP(\R^d)\).  The omega-limit set of the trajectory is
\[
        \Omega_{\rm lim}(\phi_0)
        :=
        \left\{
        \mu\in\calP(\R^d):
        \text{ there exists a sequence }t_k\to\infty
        \text{ such that }\phi_{t_k}\weak\mu
        \right\}.
\]
Every element \(\mu\in\Omega_{\rm lim}(\phi_0)\) is called an omega-limit point
of the trajectory.
\end{definition}

\begin{corollary}[Full convergence under uniqueness of the omega-limit stationary state]
\label{cor:full-conv}
Assume the hypotheses of Theorem \ref{thm:subseq-conv} and the sufficient conditions of
Lemma \ref{lem:Dfrak-uniform-continuity} hold. 
Then every omega-limit point is a zero-flux stationary state.  If there is only
one zero-flux stationary state in the omega-limit set, say \(\bar\phi\), then
\begin{equation}\label{eq:full-conv}
        \phi_t\weak\bar\phi
        \qquad\text{as }t\to\infty .
\end{equation}
\end{corollary}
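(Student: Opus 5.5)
The plan is to combine Barbalat's lemma (Lemma~\ref{lem:barbalat}) with the dissipation bound, which gives full convergence of the dissipation defect $\mathfrak D(t)\to0$, not only along a subsequence. Then the limit-passage argument of Theorem~\ref{thm:subseq-conv} is run along an arbitrary sequence. We will use the energy dissipation identity \eqref{eq:energy-diss} integrated over $(0,\infty)$, together with $\inf\calE>-\infty$ and Jensen's inequality as in Proposition~\ref{prop:energy-diss}. This gives
\[
\int_0^\infty \mathfrak D(t)^2\dd t\le \calE(\phi_0)-\inf\calE<\infty,
\]
so $\mathfrak D\in L^2(0,\infty)$. Under the hypotheses of Lemma~\ref{lem:Dfrak-uniform-continuity}, $\mathfrak D$ is nonnegative and uniformly continuous, so Lemma~\ref{lem:barbalat} with $p=2$ yields $\mathfrak D(t)\to0$ as $t\to\infty$.

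Next we show that every omega-limit point is stationary. Let $\mu\in\Omega_{\rm lim}(\phi_0)$ with $\phi_{t_k}\weak\mu$ for some $t_k\to\infty$. Because $\mathfrak D(t_k)\to0$, relation \eqref{eq:test-diss-zero} holds along this particular sequence: for $\zeta\in C_c^\infty$ the tested flux is bounded by $\|\zeta\|_\infty\mathfrak D(t_k)$. The uniform moment bound of Assumption~\ref{ass:compactness} gives the tightness and the uniform integrability used in that proof. The passages \eqref{eq:pass-attraction} and \eqref{eq:pass-repulsion} rely only on narrow convergence plus these moment bounds; for $r=0$ we use the symmetrization, and for $r>0$ the tensor-product convergence. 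We therefore repeat them verbatim with $\phi_\infty$ replaced by $\mu$, and conclude that $\mu(K_a*\omega-K_r*\mu)=0$. The key point is that Theorem~\ref{thm:subseq-conv} only needed the vanishing of the defect along the chosen sequence, and now it vanishes along every sequence.

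For full convergence, suppose $\bar\phi$ is the only zero-flux stationary state in $\Omega_{\rm lim}(\phi_0)$. By the previous step every element of $\Omega_{\rm lim}(\phi_0)$ is stationary, so $\Omega_{\rm lim}(\phi_0)=\{\bar\phi\}$; it is nonempty by tightness and Prokhorov. We argue by contradiction. If $\phi_t\not\weak\bar\phi$, there are a bounded Lipschitz $f$, an $\eps>0$, and $t_k\to\infty$ with $|\int f\dd\phi_{t_k}-\int f\dd\bar\phi|\ge\eps$. Tightness from \eqref{eq:tightness-markov-longtime} gives a narrowly convergent subsequence whose limit lies in $\Omega_{\rm lim}(\phi_0)$, hence equals $\bar\phi$, which contradicts the choice of $f$ and $\eps$.

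The main obstacle is the first step, specifically confirming that the hypotheses of Lemma~\ref{lem:Dfrak-uniform-continuity} are available. These are the uniform support bound from Theorem~\ref{thm:support} and, for $r=0$, the uniform $L^\infty$ bound. The corollary assumes them, so here the step only requires checking that $\mathfrak D$ is the same quantity appearing in \eqref{eq:avg-diss}. Everything else is a reuse of the limit-passage argument already carried out in Theorem~\ref{thm:subseq-conv}.
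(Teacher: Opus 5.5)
Your proposal is correct and follows essentially the same route as the paper: you get $\mathfrak D\in L^2(0,\infty)$ from the integrated dissipation identity, then $\mathfrak D(t)\to0$ via Lemma~\ref{lem:Dfrak-uniform-continuity} and Lemma~\ref{lem:barbalat}, then rerun the limit passage of Theorem~\ref{thm:subseq-conv} along any narrowly convergent sequence, and finally conclude full convergence from precompactness. The differences are cosmetic: you start from the defining sequence of an omega-limit point rather than from an arbitrary sequence with an extracted subsequence, and you spell out the contradiction argument that the paper compresses into ``by precompactness.''
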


\begin{proof}
By Proposition \ref{prop:energy-diss},
\[
        \int_0^T\mathfrak D(t)^2\,\dd t
        \le
        \calE(\phi_0)-\calE(\phi_T)
        \le
        \calE(\phi_0)-\inf\calE .
\]
Letting \(T\to\infty\), we obtain
\[
        \mathfrak D\in L^2(0,\infty).
\]
By Lemma \ref{lem:Dfrak-uniform-continuity}, \(\mathfrak D\) is uniformly
continuous on \([0,\infty)\).  Applying Lemma \ref{lem:barbalat} with \(p=2\),
we get
\begin{equation}\label{eq:Dfrak-goes-zero}
        \mathfrak D(t)\to0
        \qquad\text{as }t\to\infty .
\end{equation}

Let \(t_k\to\infty\) be arbitrary.  By the compactness assumption in
Theorem \ref{thm:subseq-conv}, or directly by the uniform support assumption
above, there exists a subsequence \(t_{k_j}\to\infty\) and a probability measure
\(\phi_\infty\) such that
\[
        \phi_{t_{k_j}}\weak\phi_\infty
\]
narrowly.  Since \(\mathfrak D(t)\to0\), we have
\[
        \mathfrak D(t_{k_j})\to0.
\]
Therefore the proof of Theorem \ref{thm:subseq-conv} applies to this subsequence
and shows that \(\phi_\infty\) is a zero-flux stationary state.

Thus every omega-limit point is zero-flux stationary.  If the omega-limit set
contains only one such zero-flux stationary state, denoted by \(\bar\phi\), then
every convergent subsequence of \(\phi_t\) has the same limit \(\bar\phi\).
By precompactness, this is equivalent to full convergence:
\[
        \phi_t\weak\bar\phi
        \qquad\text{as }t\to\infty .
\]
\end{proof}

\section{Wasserstein gradient flow of MMD: The case $a = r$ with $\omega(\mathbb{R}^d) = 1$} \label{sec:a=r}
\
In this section, we specifically comment on the special case $a = r$ and $\omega \in
\mathcal{P}(\mathbb{R}^d)$, i.e., the case in which the attraction and
repulsion exponents coincide and the background measure is itself a
probability measure. This case is of particular interest because, as
observed in Section~\ref{sec:intro}, the energy \eqref{eq:energy1}
reduces to
\begin{equation}\label{eq:energy_MMD}
    E(\phi) = \operatorname{MMD}^2(\phi, \omega) + C_\omega,
\end{equation}
where $C_\omega = \frac{1}{2}\iint \psi_r(x-y)\,\mathrm{d}\omega(x)\,
\mathrm{d}\omega(y)$ depends only on $\omega$, so that minimizing $E$
over probability measures is equivalent to minimizing the MMD between
$\phi$ and $\omega$. The gradient flow \eqref{eq:main} is therefore
precisely the Wasserstein gradient flow of $\operatorname{MMD}^2(\cdot,
\omega)$ with the negative-distance kernel $-|\cdot|^{1+r}$, and
$\phi = \omega$ is the unique global minimizer of $E$, with $E(\omega)
= C_\omega$.

The results of the present paper apply to this case as follows.
\\

The global Lagrangian well-posedness of Theorem~\ref{thm:wellposed},
including the uniform $L^\infty$ and moment bounds, the propagation of
$W^{n,\infty}$ regularity of Proposition~\ref{prop:Sobolev}, and the
uniqueness in the Lagrangian class, hold without any restriction on the
mass of $\omega$. In particular they apply to the case $a = r$ and
$\omega(\mathbb{R}^d) = 1$.
\\

Theorem~\ref{thm:support} on uniform confinement of the support
does \emph{not} apply in general when $a = r$ and $\omega(\mathbb{R}^d) = 1$. The intuitive reason can be explained as follows: {if  $\omega$ is not compactly supported} and $\phi_t$ converges then the support of $\phi_t$ ought to grow. Yet, in \cite{difrancesco2015asymptotic} the authors show that for $d=1$ and $\omega$ is compactly supported , then the support of $\phi_t$ also remains uniformly bounded. 
\\

The free-boundary characterization of zero-flux stationary states
developed in Section~\ref{sec:stationary} applies to the case $a = r$
without restriction on the mass of $\omega$. When $a = r$, the source
term in Proposition~\ref{prop:green} simplifies to $Q_{a,a} = \omega$,
and the characterization of Theorem~\ref{thm:stationary-characterization} reduces to
\begin{equation}
    \phi = \omega - \mathcal{L}_r F, \qquad
    F = \psi_r * \omega - \psi_r * \phi,
\end{equation}
with the zero-flux condition $\nabla F = 0$ holding $\phi$-a.e.\ on
the support of $\phi$. In particular, $\phi = \omega$ is always a
zero-flux stationary state when $a = r$ and $\omega(\mathbb{R}^d) = 1$. This is consistent with
the MMD interpretation: $\phi = \omega$ is the unique minimizer of
$E$, and any minimizer of a smooth energy is a stationary point of its
gradient flow. {In general $\omega$ is not the \emph{only} zero-flux stationary state, as in Remark 6.3, where one can choose $\omega=1_{[-1/2,1/2]}$, for which $\delta_0$ and $\omega$ are both zero-flux stationary states.}
\\

Proposition \ref{prop:energy-diss} holds without restrictions on $a,r \in [0,1)$ and the mass of $\omega$, hence it applies also for $a=r$ and $\omega(\mathbb R^d)=1$. As in Remark \ref{rmk:eql}, for $a=r$ we obtain that $\phi_t$ has uniformly bounded $q$-moment for any $q< (1+a)/2$. As $a < (1+a)/2$, there exists a $q$ such that $r=a<q$. This implies that Assumption \ref{ass:compactness} and Theorem \ref{thm:subseq-conv} hold also for $a=r$  and $\omega(\mathbb R^d)=1$. As uniform bounded support of $\phi_t$ does not hold in general when $a=r$ then Corollary \ref{cor:full-conv} does not hold. In conclusion, for the case $a=r$ we have subsequential convergence of $\phi_t$ to zero-flux stationary states.
\\
In Table \ref{tab:mmd-case} we summarize the validity of the results in the MMD case   $a=r$  and $\omega(\mathbb R^d)=1$.
\\

For different and more specific results on the analysis of the MMD gradient flow for $a=r$, we refer to the work in progress by Rosenzweig, Slep\v{c}ev, and Wang~\cite{rosenzweig2025mmd}.
\begin{table}[h]\label{tablea=r}
\centering
\renewcommand{\arraystretch}{1.4}
\begin{tabular}{p{5.5cm}p{4.5cm}c}
\hline
\textbf{Result} & \textbf{Kind} & \textbf{Holds?} \\
\hline
Thm.~\ref{thm:wellposed}: well-posedness, $L^\infty$/moment bounds, $W^{n,\infty}$ regularity, uniqueness in Lagrangian class & Global well-posedness & \checkmark \\
Thm.~\ref{thm:support} & Uniform support confinement & $\times$ \\
Prop.~\ref{prop:green}, Thm.~\ref{thm:stationary-characterization} & Zero-flux stationary characterization & \checkmark \\
\quad $\phi=\omega$ is a zero-flux state & MMD minimizer is stationary & \checkmark \\
\quad $\phi=\omega$ is the only reachable zero-flux state & Uniqueness of stationary state & $\times$ \\
Prop.~\ref{prop:energy-diss} & Energy dissipation & \checkmark \\
Assumption~\ref{ass:compactness}, Thm.~\ref{thm:subseq-conv} & Subsequential convergence & \checkmark \\
Cor.~\ref{cor:full-conv} & Full convergence & $\times$ \\
\hline
\end{tabular}
\caption{Results in the case $a = r$, $\omega(\mathbb{R}^d) = 1$
(MMD gradient flow). \checkmark\ = holds; $\times$ = does not hold in general.}
\label{tab:mmd-case}
\end{table}

\section*{Declaration of generative AI and AI-assisted technologies in the manuscript preparation process.}
During the preparation of this work the authors used ChaGPT in order to 1. search for references, 2. check for spelling and 3. generate the software to perform the numerical experiments in Section \ref{sec:examples}. After using this tool/service, the authors reviewed and edited the content as needed and take full responsibility for the content of the published article.

\bibliography{bib}
\bibliographystyle{abbrv}

\bigskip
\begin{center}
  \FundingLogos
  
  \vspace{0.5em}
  \begin{tcolorbox}\centering\small
   
    Funded by the European Union. Views and opinions expressed are however those of the author(s) only and do not necessarily reflect those of the European Union or the European Research Council Executive Agency. Neither the European Union nor the granting authority can be held responsible for them. This project has received funding from the European Research Council (ERC) under the European Union’s Horizon Europe research and innovation programme (grant agreement No. 101198055, project acronym NEITALG).
    
  \end{tcolorbox}
\end{center}

\section{Appendix}\label{sec:apx}
The following lemma is well known; see, for example, Berg--Christensen--Ressel~\cite[Chapter 3, Section 2, Corollary 2.10]{BergChristensenRessel1984}. For completeness, we include the proof.
\begin{lemma}[Conditional negative definiteness of power kernels]
\label{lem:power-cnd}
Let \(0<q\le 2\).  Let \(\nu\) be a finite signed measure on \(\mathbb R^d\)
such that
\[
        \nu(\mathbb R^d)=0,
\]
and assume that \(\nu\) has finite \(q\)-moment:
\[
        \int_{\mathbb R^d}|x|^q\,d|\nu|(x)<\infty .
\]
Then
\begin{equation}\label{eq:power-cnd}
        \iint_{\mathbb R^d\times\mathbb R^d}
        |x-y|^q\,d\nu(x)d\nu(y)
        \le 0 .
\end{equation}
\end{lemma}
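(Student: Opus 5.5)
The plan is to use the classical representation of $|x|^q$ for $0<q<2$ as a superposition of the bounded positive-definite functions $1-\cos$:
\[
        |x|^q = c_{d,q}\int_{\mathbb R^d}\frac{1-\cos(x\cdot\xi)}{|\xi|^{d+q}}\,d\xi,
        \qquad c_{d,q}>0 .
\]
This integral converges for $0<q<2$: near $\xi=0$ the integrand is $O(|\xi|^{2-d-q})$, and near infinity it is $O(|\xi|^{-d-q})$. Homogeneity and rotation invariance then show that the integral equals a positive constant times $|x|^q$.

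\emph{Case $0<q<2$.} I would insert this representation with $x$ replaced by $x-y$ into $\iint|x-y|^q\,d\nu\,d\nu$. Fubini–Tonelli is justified because the integrand is nonnegative up to $d|\nu|\otimes d|\nu|$, and the total integral is finite by the $q$-moment assumption, since $|x-y|^q\le C(|x|^q+|y|^q)$. After the exchange:
\begin{itemize}
\item The constant term "1" integrates to $\nu(\R^d)^2=0$.
\item The cosine term gives $\iint\cos((x-y)\cdot\xi)\,d\nu(x)\,d\nu(y)=|\widehat\nu(\xi)|^2$.
\end{itemize}
Hence
\[
        \iint|x-y|^q\,d\nu\,d\nu=-c_{d,q}\int_{\R^d}\frac{|\widehat\nu(\xi)|^2}{|\xi|^{d+q}}\,d\xi\le0 .
\]

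\emph{Main obstacle: justifying the exchange.} The split into the "1" part and the cosine part is not absolutely integrable separately, because $\int|\xi|^{-d-q}\,d\xi$ diverges at the origin. To handle this, I would first insert a regularising factor $e^{-\epsilon|\xi|^2}$ and also cut off $|\xi|>\epsilon$. On this truncated region everything is absolutely integrable, and the identity
\[
        \iint\bigl(1-\cos((x-y)\cdot\xi)\bigr)\,d\nu\,d\nu=-|\widehat\nu(\xi)|^2
\]
holds pointwise in $\xi$, using $\nu(\R^d)=0$. Then I would let $\epsilon\to0$:
\begin{itemize}
\item On the left, apply dominated convergence with dominating function $c|x-y|^q\in L^1(|\nu|\otimes|\nu|)$.
\item On the right, apply monotone convergence.
\end{itemize}
Equivalently, one can avoid the truncation by writing $\iint(1-\cos)\,d\nu\,d\nu=-|\widehat\nu|^2$ for each fixed $\xi$ before integrating in $\xi$. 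Since $1-\cos\ge0$ is integrable against $d|\nu|\otimes d|\nu|\otimes|\xi|^{-d-q}d\xi$, Tonelli applies directly to the triple integral with measure $|\nu|$. Fubini for the signed measure then follows.

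\emph{Case $q=2$.} Here the argument is direct. Expanding $|x-y|^2=|x|^2-2x\cdot y+|y|^2$ and using $\nu(\R^d)=0$ gives
\[
        \iint|x-y|^2\,d\nu\,d\nu=-2\Bigl|\int x\,d\nu\Bigr|^2\le0 .
\]
The first moment is finite because of the finite second moment. Alternatively, the case $q=2$ can be obtained as the limit $q\uparrow2$ of the previous case by dominated convergence.
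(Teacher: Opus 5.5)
Your proof is correct and follows the paper's argument: for $0<q<2$ you use the same representation $|z|^q=c_{d,q}\int(1-\cos(\xi\cdot z))|\xi|^{-d-q}\,d\xi$ and the same identity $\iint\cos((x-y)\cdot\xi)\,d\nu\,d\nu=|\widehat\nu(\xi)|^2$, and for $q=2$ the same direct expansion. The only difference is that you justify the exchange of integrals directly by Tonelli with respect to $|\nu|\otimes|\nu|\otimes|\xi|^{-d-q}\,d\xi$, which is finite by the $q$-moment bound; this is cleaner than the paper's route of first treating compactly supported $\nu$ and then passing to the limit through mass-zero truncations.
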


\begin{proof}
We first prove the result for \(0<q<2\).  The standard Riesz representation is
\[
        |z|^q
        =
        c_{d,q}
        \int_{\mathbb R^d}
        \frac{1-\cos(\xi\cdot z)}{|\xi|^{d+q}}\,d\xi,
        \qquad c_{d,q}>0 .
\]
For compactly supported signed measures \(\nu\), Fubini's theorem gives
\[
\begin{aligned}
\iint |x-y|^q\,d\nu(x)d\nu(y)
&=
c_{d,q}
\int_{\mathbb R^d}
\frac{
\iint
\bigl(1-\cos(\xi\cdot(x-y))\bigr)
\,d\nu(x)d\nu(y)}
{|\xi|^{d+q}}
\,d\xi .
\end{aligned}
\]
Since \(\nu(\mathbb R^d)=0\),
\[
        \iint 1\,d\nu(x)d\nu(y)=0.
\]
Moreover,
\[
\begin{aligned}
\iint \cos(\xi\cdot(x-y))\,d\nu(x)d\nu(y)
&=
\operatorname{Re}
\left[
        \iint e^{-i\xi\cdot(x-y)}\,d\nu(x)d\nu(y)
\right]  \\
&=
\left|\widehat{\nu}(\xi)\right|^2 .
\end{aligned}
\]
Therefore
\[
        \iint |x-y|^q\,d\nu(x)d\nu(y)
        =
        -c_{d,q}
        \int_{\mathbb R^d}
        \frac{|\widehat{\nu}(\xi)|^2}{|\xi|^{d+q}}\,d\xi
        \le 0 .
\]
For a general finite signed measure with finite \(q\)-moment and zero total
mass, apply the previous argument to compactly supported truncations
\(\nu_R\), adjusted so that \(\nu_R(\mathbb R^d)=0\), and then let
\(R\to\infty\).  The finite \(q\)-moment assumption gives convergence of the
double integrals by dominated convergence or monotone truncation applied to the
positive and negative parts of \(\nu\).  This proves the claim for
\(0<q<2\).

For the endpoint \(q=2\), expand
\[
        |x-y|^2=|x|^2+|y|^2-2x\cdot y.
\]
Because \(\nu(\mathbb R^d)=0\), the first two terms vanish after integration:
\[
        \iint |x|^2\,d\nu(x)d\nu(y)
        =
        \left(\int |x|^2\,d\nu(x)\right)\nu(\mathbb R^d)
        =
        0,
\]
and similarly for the \(|y|^2\)-term.  Hence
\[
\begin{aligned}
\iint |x-y|^2\,d\nu(x)d\nu(y)
&=
-2\iint x\cdot y\,d\nu(x)d\nu(y)  \\
&=
-2\left|\int_{\mathbb R^d}x\,d\nu(x)\right|^2
\le 0 .
\end{aligned}
\]
This proves the endpoint \(q=2\), and the lemma follows.
\end{proof}

\begin{remark}[Dini derivative]\label{rmk:dini}

We recall the corresponding maximum-rule argument for
\(R(t)\).  Set
\[
        K:=\operatorname{supp}\phi_0 .
\]
Since \(\phi_t=(X_t)_{\#}\phi_0\), we can write
\[
        R(t)=\max_{x\in K}|X_t(x)|.
\]
For each fixed \(x\in K\), define
\[
        F_x(t):=|X_t(x)|.
\]
At times when \(R(t)>0\), every maximizer satisfies \(X_t(x)\neq0\), and hence
\(F_x\) is differentiable at such active points, with
\[
        \frac{\dd}{\dd t}F_x(t)
        =
        \frac{X_t(x)}{|X_t(x)|}\cdot \dot X_t(x).
\]

We now prove the upper Dini estimate.  Choose \(h_n\downarrow0\) along a
subsequence realizing the upper limit, and choose \(x_n\in K\) such that
\[
        R(t+h_n)=|X_{t+h_n}(x_n)|.
\]
Since \(K\) is compact, after passing to a subsequence,
\[
        x_n\to x_*\in K .
\]
By the continuity of the flow and of \(R(t)\), we obtain
\[
        |X_t(x_*)|=R(t),
\]
so \(x_*\) is an active maximizer at time \(t\).  Moreover,
\[
\begin{aligned}
\frac{R(t+h_n)-R(t)}{h_n}
&=
\frac{F_{x_n}(t+h_n)-R(t)}{h_n} \\
&\le
\frac{F_{x_n}(t+h_n)-F_{x_n}(t)}{h_n},
\end{aligned}
\]
because \(F_{x_n}(t)\le R(t)\).  Letting \(n\to\infty\), and using the
continuity of the vector field along the flow, the right-hand side converges to
\[
        \frac{\dd}{\dd t}F_{x_*}(t)
        =
        \frac{X_t(x_*)}{|X_t(x_*)|}\cdot \dot X_t(x_*).
\]
Therefore
\[
        D^+R(t)
        \le
        \frac{X_t(x_*)}{|X_t(x_*)|}\cdot \dot X_t(x_*).
\]
Taking the maximum over all active maximizers gives
\[
        D^+R(t)
        \le
        \max_{\substack{x\in\operatorname{supp}\phi_0\\ |X_t(x)|=R(t)}}
        \frac{X_t(x)}{|X_t(x)|}\cdot \dot X_t(x).
\]

For completeness, we recall the maximum-rule argument for the diameter.  Set
\[
        \overline K:=\operatorname{supp}\phi_0\times\operatorname{supp}\phi_0 .
\]
Since \(\phi_t=(X_t)_{\#}\phi_0\), we can write
\[
        D(t)^2
        =
        \max_{(x,z)\in\overline K}|X_t(x)-X_t(z)|^2 .
\]
For each fixed pair \((x,z)\in\overline K\), define
\[
        F_{x,z}(t):=|X_t(x)-X_t(z)|^2 .
\]
Then \(F_{x,z}\) is differentiable in \(t\), and
\[
        \frac{\dd}{\dd t}F_{x,z}(t)
        =
        2\left\langle
        \dot X_t(x)-\dot X_t(z),
        X_t(x)-X_t(z)
        \right\rangle .
\]

Choose \(h_n\downarrow0\) along a subsequence realizing the upper limit in
\(D^+D(t)^2\), and choose \((x_n,z_n)\in\overline K\) such that
\[
        D(t+h_n)^2
        =
        |X_{t+h_n}(x_n)-X_{t+h_n}(z_n)|^2 .
\]
Since \(\overline K\) is compact, after passing to a subsequence,
\[
        (x_n,z_n)\to(x_*,z_*)\in\overline K .
\]
By the continuity of the flow and of \(D(t)\), we obtain
\[
        |X_t(x_*)-X_t(z_*)|=D(t),
\]
so \((x_*,z_*)\) is a diameter pair at time \(t\).  Moreover,
\[
\begin{aligned}
\frac{D(t+h_n)^2-D(t)^2}{h_n}
&=
\frac{F_{x_n,z_n}(t+h_n)-D(t)^2}{h_n}  \\
&\le
\frac{F_{x_n,z_n}(t+h_n)-F_{x_n,z_n}(t)}{h_n},
\end{aligned}
\]
because \(F_{x_n,z_n}(t)\le D(t)^2\).  Letting \(n\to\infty\), and using the
continuity of the vector field along the flow, the right-hand side converges to
\[
        \frac{\dd}{\dd t}F_{x_*,z_*}(t)
        =
        2\left\langle
        \dot X_t(x_*)-\dot X_t(z_*),
        X_t(x_*)-X_t(z_*)
        \right\rangle .
\]
Therefore
\[
        D^+D(t)^2
        \le
        2\left\langle
        \dot X_t(x_*)-\dot X_t(z_*),
        X_t(x_*)-X_t(z_*)
        \right\rangle .
\]
Taking the maximum over all diameter pairs gives
\[
        D^+D(t)^2
        \le
        \max_{\substack{x,z\in\operatorname{supp}\phi_0\\
        |X_t(x)-X_t(z)|=D(t)}}
        2\left\langle
        \dot X_t(x)-\dot X_t(z),
        X_t(x)-X_t(z)
        \right\rangle .
\]

\end{remark}

\begin{example}[Particle escape with background mass larger than one]
\label{ex:particle-escape-mass-larger-than-one}
Let \(d=1\), let
\[
        a=r=s>1,
\]
and choose a number
\[
        1<m<2^{s-1}.
\]
Let the background be
\[
        \omega=m\delta_0,
\]
so that
\[
        \omega(\mathbb R)=m>1.
\]
Take the initial particle measure
\[
        \phi_0=\frac12\delta_{-X_0}+\frac12\delta_{X_0},
        \qquad X_0>0.
\]
We look for a symmetric particle solution of the form
\[
        \phi_t=\frac12\delta_{-X(t)}+\frac12\delta_{X(t)},
        \qquad X(t)>0.
\]
Since
\[
        K_s(x)=(1+s)\operatorname{sgn}(x)|x|^s,
\]
the velocity of the right particle is
\[
\begin{aligned}
        \dot X(t)
        &=
        -K_s*\omega(X(t))+K_s*\phi_t(X(t))  \\
        &=
        -m(1+s)X(t)^s
        +
        \frac12(1+s)(2X(t))^s .
\end{aligned}
\]
Therefore
\[
        \dot X(t)
        =
        (1+p)\bigl(2^{s-1}-m\bigr)X(t)^s .
\]
By the choice \(1<m<2^{s-1}\), the coefficient is positive. Hence the particles
move outward. Solving the scalar ODE gives
\[
        X(t)
        =
        \left[
        X_0^{1-s}
        -
        (s-1)(1+s)\bigl(2^{s-1}-m\bigr)t
        \right]^{-1/(s-1)} .
\]
Thus
\[
        X(t)\to+\infty
        \qquad\text{as }t\uparrow T_*,
\]
where
\[
        T_*
        =
        \frac{X_0^{1-s}}
        {(s-1)(1+s)(2^{s-1}-m)}.
\]
Consequently,
\[
        \operatorname{diam}(\operatorname{supp}\phi_t)=2X(t)
\]
becomes unbounded in finite time, even though \(\omega(\mathbb R)=m>1\).
This shows that the uniform compactness result for \(0\le r\le1\) cannot be
extended to the superlinear equal-exponent regime \(a=r>1\).
\end{example}

\end{document}